\begin{document}

\newtheorem*{theo}{Theorem}
\newtheorem*{pro} {Proposition}
\newtheorem*{cor} {Corollary}
\newtheorem*{lem} {Lemma}
\newtheorem{theorem}{Theorem}[section]
\newtheorem{corollary}[theorem]{Corollary}
\newtheorem{lemma}[theorem]{Lemma}
\newtheorem{proposition}[theorem]{Proposition}
\newtheorem{conjecture}[theorem]{Conjecture}

\theoremstyle{definition}
 \newtheorem{definition}[theorem]{Definition}
  \newtheorem{example}[theorem]{Example}
   \newtheorem{remark}[theorem]{Remark}
   
\newcommand{\Naturali}{{\mathbb{N}}}
\newcommand{\Reali}{{\mathbb{R}}}
\newcommand{\Complessi}{{\mathbb{C}}}
\newcommand{\Toro}{{\mathbb{T}}}
\newcommand{\Relativi}{{\mathbb{Z}}}
\newcommand{\HH}{\mathfrak H}
\newcommand{\KK}{\mathfrak K}
\newcommand{\LL}{\mathfrak L}
\newcommand{\as}{\ast_{\sigma}}
\newcommand{\tn}{\vert\hspace{-.3mm}\vert\hspace{-.3mm}\vert}
\def\A{{\cal A}}
\def\B{{\cal B}}
\def\E{{\cal E}}
\def\F{{\cal F}}
\def\H{{\cal H}}
\def\K{{\cal K}}
\def\L{{\cal L}}
\def\N{{\cal N}}
\def\M{{\cal M}}
\def\gM{{\frak M}}
\def\O{{\cal O}}
\def\P{{\cal P}}
\def\S{{\cal S}}
\def\T{{\cal T}}
\def\U{{\cal U}}
\def\V{{\mathcal V}}
\def\qed{\hfill$\square$}

\title{On discrete twisted 
C*-dynamical systems, Hilbert C*-modules and regularity}

\author{Erik B\'edos$^*$,
Roberto Conti$^{**}$ \\}
\date{\today}
\maketitle
\markboth{R. Conti, Erik B\'edos}{
}
\renewcommand{\sectionmark}[1]{}
\begin{abstract} We first give an overview of the basic theory for discrete unital twisted C$^*$-dynamical systems and their covariant representations on Hilbert C$^*$-modules. After introducing the notion of equivariant representations of such systems and their product with covariant representations, we prove a kind of Fell absorption principle saying that the product of an induced regular equivariant representation with a covariant faithful representation is weakly equivalent to an induced regular covariant representation. This principle is the key to our main result, namely that a certain property, formally weaker than Exel's approximation property, ensures that the system is regular, i.e., the associated full and reduced C$^*$-crossed products are canonically isomorphic. 

\vskip 0.9cm
\noindent {\bf MSC 2010}: 22D10, 22D25, 46L55, 43A07, 43A65

\smallskip
\noindent {\bf Keywords}: 
twisted C*-dynamical system,
twisted C*-crossed product, covariant representation, equivariant representation,
Hilbert C*-module, amenability, approximation property, regularity. 
\end{abstract}

\vfill
\thanks{\noindent $^*$  research partially supported by the Norwegian Research
Council (NFR).\par
}

\newpage

\bigskip

\section{Introduction} 

In a previous work \cite{BeCo2} we have discussed convergence and summation of Fourier series of elements in reduced twisted group $C^*$-algebras associated with discrete groups. As Fourier series may also be defined for elements in the reduced  $C^*$-crossed product $C^*_r(\Sigma)$ of a discrete twisted $C^*$-dynamical system $\Sigma=(A,G, \alpha, \sigma)$, a natural question is how much of our analysis can be transfered to this more general case. 
One possible approach is to consider $C^*_r(\Sigma)$ as the reduced cross sectional algebra of a  Fell bundle over the discrete group $G$ (see \cite{ExLa}). Then 
a result  of R. Exel  \cite{Ex}
for such Fell bundles with the so-called approximation property may be applied to produce certain 
summation processes, 
that may be considered as analogs of the classical Fej{\' e}r summation process. 

In order to construct other types of summation processes for $C_r^*(\Sigma)$, 
we have reached  the conclusion that one should exploit the structure of discrete twisted 
$C^*$-crossed products and their representation theory on Hilbert  $C^*$-modules. 
A feature that is apparently not available in the setting of Fell bundles is the concept of an {\it equivariant representation}  of  $\Sigma$ on a Hilbert $A$-module.  As will be seen in our forthcoming work \cite{BeCo3}, such representations may be used to induce  (completely bounded) multipliers of $C_r^*(\Sigma)$, which in turn are  basic ingredients in the construction of summation processes. 

It seems to us that equivariant representations  of $\Sigma$ have an important role to play in the representation theory of $\Sigma$, complementary to the one played by covariant representations.
For example, one may think of Exel's approximation property for $\Sigma$ as saying that the trivial equivariant representation of $\Sigma$ is weakly contained in the regular equivariant representation of $\Sigma$, hence as a form of amenability of $\Sigma$. As shown by Exel  \cite{Ex} (in the context of Fell bundles; see also \cite{ExNg}), the approximation property implies that $\Sigma$ is regular, i.e., the full crossed product $C^*(\Sigma)$ and $C^*_r(\Sigma)$ are canonically isomorphic, and it is an open problem whether the converse holds or not. 
Besides providing us with some of the tools necessary for our work in \cite{BeCo3}, our aim with this paper is to 
show that one may  weaken Exel's approximation property for $\Sigma$ without loosing regularity. Due to the present lack of examples of systems having the weak approximation property without having Exel's approximation property, it is possible that this weakening is only of a formal character. Nevertheless, in our opininon, it provides a more flexible concept, and we will illustrate this by proving a permanence result that is not obviously true if one sticks to the approximation property.        

The paper is organized as follows. Since the twisted case in its full generality is not covered in the existing literature in an adequate way for our purposes, we give in Sections 2 and 3 an introduction to discrete twisted $C^*$-dynamical systems, their covariant representations on Hilbert C$^*$-modules, and  the associated full and reduced C$^*$-crossed products. Our presentation 
relies on many sources, such as \cite{AD1, AD2, BrOz, Com, Ec, Ex, ExNg, La1, PaRa, PaRa1, ZM, Wi}.

Section 4 is devoted to equivariant representations of discrete twisted $C^*$-dynamical systems and a generalization of Fell's absorption principle. 
After introducing what we mean by an equivariant representation $(\rho, v)$ of $\Sigma$ on a Hilbert $A$-module,
we show that one may
form the product of $(\rho, v)$ with a covariant representation $(\pi, u)$ of $\Sigma$ to obtain another covariant representation $(\rho\dot\otimes\pi, v\dot\otimes u)$. Moreover,  if $(\check{\rho}, \check{v})$ is the regular equivariant representation induced by $(\rho, v)$  and $\pi$ is faithful, then we prove that $(\check{\rho}\dot\otimes\pi, \check{v}\dot\otimes u)$ is weakly equivalent to an induced regular covariant representation (see Theorem \ref{fell}). 

In Section 5 we define the weak approximation property for $\Sigma$. Every equivariant representation induces a regular equivariant representation
and, in a certain sense, the weak approximation property of $\Sigma$ says that the trivial equivariant representation  is weakly contained in some induced regular equivariant representation.
Using our version of Fell's absorption principle,  we show (Theorem \ref{mainreg}) that the weak approximation property implies that $\Sigma$ is regular. 
This result covers most of the known results in this direction (see e.g.\  \cite{AD1, AD2, BrOz, Ex, ExNg, PaRa1, ZM}). 
We also show that under a natural assumption (the existence of an equivariant conditional expectation), the weak approximation property for $\Sigma$ is inherited by a so-called $G$-subsystem. This permanence result has several interesting consequences, and we used it to show that some systems have the approximation property (see Example \ref{exact}).

\medskip  
To avoid many technical details that would obscure our exposition, we will assume that all
$C^*$-algebras in this article are {\it unital}, unless otherwise specified, and $A$ will always denote such a $C^*$-algebra. The diligent reader 
will surely be able to handle
the non-unital case if necessary.

\smallskip Throughout the paper, we will use the following conventions. By a homomorphism of a unital $*$-algebra into a $C^*$-algebra, we will mean a unital $*$-homomorphism.  Isomorphisms between $C^*$-algebras will also be assumed to be $*$-preserving. The group of unitary elements in $A$ will be denoted by $\U(A)$, the center of $A$ by $Z(A)$, while the group of ($*$-preserving) automorphisms of $A$ will be denoted by ${\rm Aut}(A)$. The identity map on $A$ will be denoted by ${\rm id}$ (or ${\rm id}_A$). 

If $\phi_1:A \to B_1$ and $\phi_2:A\to B_2$ are homomorphisms between $C^*$-algebras, we will say that 
$\phi_1$ is {\it weakly contained} in $\phi_2$ (resp.\ is {\it weakly equivalent} to $\phi_2$)   whenever ${\rm \ker}(\phi_2) \subseteq {\rm ker}(\phi_1)$ (resp.\ ${\rm \ker}(\phi_2) = {\rm ker}(\phi_1)$), that is, whenever there exists a homomorphism (resp.\ isomomorphism) $\psi$ from $ \phi_2(A)$ onto $\phi_1(A)$ such that $\psi\circ\phi_2= \phi_1$. 

\smallskip By a Hilbert $C^*$-module, we will always mean a  {\it right}� Hilbert $C^*$-module and follow the notation introduced in \cite{La1}. Especially, all inner products will be assumed to be linear in the second variable, $\L(X, Y)$  will denote the space of all adjointable operators between two Hilbert $C^*$-modules $X$ and $Y$ over a common $C^*$-algebra, and $\L(X) = \L(X,X)$. A representation of $A$ on a Hilbert $C^*$-module $X$ is then a homomorphism from $A$ into the $C^*$-algebra $\L(X)$. If $Z$ is another Hilbert $C^*$-module (possibly over some other $C^*$-algebra), we will let $\pi \otimes \iota : A  \to \L(X \otimes Z)$ denote the amplified representation of $A$ on $X \otimes Z$ given by $(\pi \otimes \iota)(a) = \pi(a) \otimes I_Z$ where the Hilbert $C^*$-module $X \otimes Z$ is the external tensor product of $X$ and $Z$ and $I_Z$ denotes the identity operator on $Z$.

\section{Twisted crossed products by discrete groups}
Throughout this paper, the quadruple $\Sigma = (A, G, \alpha,\sigma)$ will denote a {\it twisted 
(unital, discrete) 
$C^*$-dynamical system}, This means that
$A$ is a $C^*$-algebra with unit $1$, 
$G$ is a discrete group with identity $e$
and $(\alpha,\sigma)$ is a {\it twisted
action} of $G$ on $A$, that is,
$\alpha$ is a map from $G$ into ${\rm Aut}(A)$  
and  $\sigma$ is a map from $G \times G$ into $\, \U(A)$,
satisfying
\begin{align*}
\alpha_g \circ \alpha_h & = {\rm Ad}(\sigma(g, h)) \circ  \alpha_{gh} \\
\sigma(g,h) \sigma(gh,k) & = \alpha_g(\sigma(h,k)) \sigma(g,hk) \\
\sigma(g,e) & = \sigma(e,g) = 1 \ , 
\end{align*}
for all $g,h,k \in G$. Of course,  ${\rm Ad}(v)$ denote here the (inner) automorphism  of $A$ implemented by some unitary $v$ in $A$.

\medskip One can readily deduce from the above relations a number of 
other useful identities, for instance
$$\alpha_e = {\rm id}, \, \sigma(g,g^{-1}) = \alpha_g(\sigma(g^{-1},g))$$
and
$$\alpha^{-1}_g
= \alpha_{g^{-1}} \circ  {\rm Ad}(\sigma(g,g^{-1})^*)
= {\rm Ad}(\sigma(g^{-1},g)^*) \circ  \alpha_{g^{-1}}$$
to quote a few.

\medskip
Note that if 
 $\sigma$ is {\it central}, that is, takes values in the center $Z(A)$ of $A$, then 
$\alpha$ is a homomorphism from $G$ into ${\rm Aut}(A)$, i.e.\ is an ordinary action of $G$ on $A$. The seminal paper of Zeller-Meier \cite{ZM}     
still contains a lot of valuable information on this case. If $\sigma$ is scalar-valued, that is,  takes values in $\Toro$  (identified with $\mathbb{T}\cdot 1$), then $\sigma$ is just a normalized 2-cocycle on $G$, i.e.\ belongs to the second cohomology group $Z^2(G,\mathbb{T})$.  This is especially satisfied when $A=\mathbb{C}$  and we refer e.g.\  to \cite{BeCo2} and references therein for more information on this special case. 
If $\sigma$ is 
trivial, that is, $\sigma(g,h)=1$ for all $g,h \in G$, then $\Sigma$ is an ordinary (untwisted) $C^*$-dynamical system, and  such systems are studied (in their full generality) in several books, the most recent one being \cite{Wi}; see also \cite{Ec} for a nice overview.

\medskip
To each twisted $C^*$-dynamical system  $\Sigma = (A, G, \alpha,\sigma)$ 
one may associate its {\it full twisted crossed product} 
$C^*(\Sigma)$ 
and its {\it reduced twisted crossed product} 
$C^*_r(\Sigma)$ (see \cite{PaRa, PaRa1}).  We will recall below their definitions and some of their basic properties. This can be done without much trouble, at least in the discrete case, by making use of Hilbert $C^*$-modules. This approach is definitely not new, but we could not find it in the literature in a unified form suitable for our purposes in the present paper (and in \cite{BeCo3}).

\medskip

The vector space $C_c(G,A)$ of functions from 
$G$ into $A$ with finite support  becomes a (unital) $*$-algebra, denoted by $C_c(\Sigma)$, when equipped with the  
twisted convolution product and the involution given by:
\begin{equation}\label{convol}
(f_1 \ast f_2)(h) 
= \sum_{g \in G} f_1(g) \, \alpha_g(f_2(g^{-1}h)) \, \sigma(g,g^{-1}h), 
 \end{equation}
\begin{equation}\label{star}
f^*(h) = \sigma(h, h^{-1})^*\, \alpha_h(f(h^{-1}))^*  \ ,
 \end{equation}
 where $\ f_1, f_2, f \in C_c(\Sigma), h \in G \ .$
 
\medskip Hereafter, we will let $a \odot  \delta_g \in C_{c}(G,A)$  denote the function which is 
 0 every- where except at the point $g \in G$, where it takes the value $a \in A$. Obviously, $1\odot\delta_e$ is then the unit of $C_c(\Sigma)$.
 
\medskip By a  {\it covariant homomorphism} of $\Sigma$ we will mean a pair $(\pi,u)$,
where $\pi$ is a 
homomorphism of $A$ into a  
$C^*$-algebra $C$ and  $u$ is a map of $G$ into $\U(C)$, which satisfy
$$u(g)\, u(h) = \pi(\sigma(g,h))\,  u(gh)$$ and
the covariance relation 
\begin{equation}
\pi(\alpha_g(a)) = u(g) \, \pi(a) \, u(g)^* 
\end{equation}
 for all $g,h \in G$, $a�\in A$. 
If $C=\L(X)$ for some Hilbert $C^*$-module $X$, we then say that
  $(\pi, u)$ is a {\it covariant representation} of $\Sigma$ on $X$. 

\medskip There exists a bijective correspondence between covariant homomorphisms
of $\Sigma$ and 
homomorphisms of $C_c(\Sigma)$, 
that associates to each $(\pi,u)$ the unital $*$-homomorphism $\pi \times u$ 
given by
\begin{equation}
(\pi \times u)(f) = \sum_{g \in G} \pi(f(g))\, u(g), \ f \in C_c(\Sigma) \,. 
\end{equation}
The ``integrated form'' $\pi \times u$  satisfies
$(\pi \times u)(a \odot \delta_g)= \pi(a)\, u(g)$ 
for all $a \in A$ and $g \in G$. 

\medskip The $C^*$-algebra $C^*(\Sigma)$ is
the completion of the $*$-algebra $C_c(\Sigma)$ with respect to the $C^*$-norm
\begin{equation}\label{fullnorm}
\|f\|_{*} = \sup \{�\|(\pi \times u)(f)\| \,:\, (\pi,u)\, \,  \text{is a covariant homomorphism of} \, \, \Sigma\}.
\end{equation}

As will soon be explained, equation (\ref{fullnorm}) gives indeed a norm (and not only  a seminorm) on $C_{c}(\Sigma)$,  
and we will  identify $C_c(\Sigma)$ with its canonical copy inside $C^*(\Sigma)$. 
Any  homomorphism $\phi$ from $C_{c}(\Sigma)$ into some $C^*$-algebra extends uniquely to a homomorphism of $C^*(\Sigma)$, still denoted by $\phi$.
Conversely, every homomorphism $\phi$ of $C^*(\Sigma)$  into some $C^*$-algebra $C$ comes from a  homomorphism defined on $C_{c}(\Sigma)$, and we have $\phi = \pi \times u$ where $(\pi,u) $ is the covariant homomorphism of $\Sigma$ into $C$ given by $$\pi(a)= \phi( a \odot \delta_e), \, u(g) = \phi( 1 \odot \delta_g), \, \quad a \in A, \, g \in G\,.$$ 
For example, the identity morphism ${\rm id}_\Sigma: C^*(\Sigma)\to C^*(\Sigma)$ disintegrates as ${\rm id}_\Sigma= i_A\times i_G$ where 
$(i_A, i_G)$ is the {\it universal covariant homomorphism of $\Sigma$} into $C^*(\Sigma)$ given by $$i_A(a) = a \odot \delta_e\, , \, i_G(g) = 1 \odot \delta_g\, , \, \quad a \in A, \, g \in G\,.$$

\medskip 
We next turn our attention to {\it regular} covariant representations of $\Sigma$. Let $Y$ be a Hilbert $B$-module 
and assume $\pi$ is a 
representation of $A$ on $Y$.
We can then form   the  Hilbert $B$-module $Y^G$ given by
\begin{equation}
Y^G 
= \Big\{\xi: G \to Y \ | \ \sum_{g \in G} \big\langle\xi(g),\xi(g)\big\rangle \ 
\mbox{is norm-convergent in  $B$} \Big\} \, 
\end{equation}
endowed with the $B$-valued scalar product 
$$\big\langle\xi,\eta \big\rangle = \sum_{g\in G} \big\langle \xi(g),\eta(g)\big\rangle$$ and the 
natural module right action of $B$ given by
$$(\xi\cdot b )(g) = \xi(g)\, b \,, \quad g \in G\,.$$

The  {\it regular covariant  representation} 
$(\tilde{\pi},  \tilde{\lambda}_\pi)$ of $\Sigma$ on $Y^G$ induced by $\pi$ is then
defined by
\begin{align}
(\tilde{\pi}(a)\xi)(h) & = \pi\big(\alpha_h^{-1}(a)\big)\xi(h) \ , 
\hspace{10ex}  a \in A, \, \xi \in Y^G, \, h \in G, \ \\
(  \tilde{\lambda}_\pi(g)\xi)(h) & = 
\pi\big(\alpha_h^{-1}(\sigma(g,g^{-1}h))\big)\xi(g^{-1}h) \ , 
\quad g,h \in G, \, \xi \in Y^G \, .
\end{align}
It is tedious, but straightforward, to check that $(\tilde{\pi},  \tilde{\lambda}_\pi)$ is indeed a covariant representation of $\Sigma$  on $Y^G$. 

\medskip As a special case, we  consider  $A$ itself as a Hilbert $A$-module in the standard way and let $\ell: A \to \L(A)$  be given  by $\ell(a)(a')= a  a' , \, a, a' \in A$. The regular covariant  representation 
$(\tilde{\ell}, \tilde{\lambda}_\ell)$
  associated to $\ell$
   acts on the Hilbert $A$-module  
   \begin{equation}
A^G 
= \Big\{\xi: G \to A \ | \ \sum_{g \in G} \, \xi(g)^*\,\xi(g) \ 
\mbox{is norm-convergent in  $A$}\Big\}\footnote{$A^G$ is often denoted by $ \ell^2(G, A)$ in the literature.}  \, 
\end{equation}
 in the following way:
\begin{align}
(\tilde{\ell}(a)\xi)(h) & = \alpha_h^{-1}(a) \,\xi(h) \ , 
\hspace{10ex}  a \in A, \, \xi \in A^G, \, h \in G, \ \\
(  \tilde{\lambda}_\ell(g)\xi)(h) & = 
\alpha_h^{-1}(\sigma(g,g^{-1}h))\,\xi(g^{-1}h) \ , 
\quad g,h \in G, \, \xi \in A^G \, .
\end{align}

The homomorphism $\Lambda = \tilde{\ell}\times \tilde{\lambda}_\ell: C_c(\Sigma) \to \L(A^G)$ is easily seen to be faithful. 
Consequently,  $\| \cdot\|_{*}$ is a norm on $C_c(\Sigma)$. Moreover, this allows us to define another $C^*$-norm $\| \cdot \|_r$ on $C_c(\Sigma)$ by setting
$$\|f\|_r = \| \Lambda(f)\|\,, \, \quad f \in C_c(\Sigma)\,.$$ 
The $C^{*}$-completion of
$C_c(\Sigma)$ with respect $\|�\cdot \|_r$ is  denoted by $C^*_r(\Sigma)$.
More concretely, we will often consider  $C^*_r(\Sigma)$ to be the $C^*$-subalgebra of $\L(A^G)$ generated by 
$\Lambda(C_c(\Sigma))$; in other words, we will often  identify $C^*_r(\Sigma)$ with $\Lambda(C^*(\Sigma))$.

\smallskip Now, let us consider again a representation $\pi: A \to \L(Y)$ on a Hilbert $B$-module $Y$. Making use of  the interior tensor product of Hilbert $C^*$-modules (cf.\ \cite{La1}), we can form the Hilbert $B$-module $A^G \otimes_\pi Y$.
We recall that $\pi$ induces  a canonical homomorphism $\pi_*: \L(A^G) \to 
\L(A^G \otimes_\pi Y)$ such that $$\pi_{*}(S)(\xi \dot\otimes \,y) = (S\xi) \dot\otimes \,y\, , \quad S \in \L(A^G), \, \xi \in A^G, \, y \in Y\,.$$ 
The Hilbert $B$-modules $A^G \otimes_\pi Y $ and $Y^G$ are in fact unitarily equivalent. Indeed, the map $U_\pi: A^G \otimes_\pi Y \to Y^G$ determined by
 $$\big[U_\pi(\xi \dot\otimes y)\big](h) = \pi(\xi(h))\, y\, , \quad \xi \in A^G, \, y \in Y, \, h \in G\,  $$ 
 is  easily seen to be a unitary operator in $\L(A^G \otimes_\pi Y, Y^G)$. 
Identifying $A^G \otimes_\pi Y $ and $ Y^G$  via $U_\pi$, one checks that
\begin{align*}
\pi_*(\tilde{\ell}(a)) & 
= \tilde{\pi}(a)\, , \,  a \in A \\
\pi_*(\tilde{\lambda}_\ell(g)) &  
=   \tilde{\lambda}_\pi(g) \, ,\,
g \in G \, .
\end{align*}
It follows that $\, \pi_*\circ \Lambda $  
$= \tilde\pi \times   \tilde{\lambda}_\pi $ on $C^*(\Sigma)$, hence  that $\tilde\pi \times   \tilde{\lambda}_\pi$ is weakly contained in $\Lambda$. 

\medskip If $\pi$ is faithful, then $\pi_*$ is faithful \cite{La1}; hence, in this case,  $\pi_*$ maps $C^*_r(\Sigma)=\Lambda(C^*(\Sigma))$ isomorphically onto $(\tilde{\pi}\times   \tilde{\lambda}_\pi)(C^*(\Sigma))$ 
and $\tilde\pi \times   \tilde{\lambda}_\pi$ is weakly equivalent to $\Lambda$.\footnote{A characterization of the homomorphisms of $C^*(\Sigma)$ which are weakly equivalent to $\Lambda$ will be given in Proposition \ref{CondE}.}
Moreover, choosing $Y$ to be a Hilbert space, 
one hereby recovers the usual definition of the reduced twisted crossed product, that is, $(\tilde{\pi}\times   \tilde{\lambda}_\pi)(C^*(\Sigma))$, and the fact that it does not
depend (up to isomorphism) on the choice of a faithful representation $\pi$ of $A$ on a Hilbert space.

\medskip Some authors prefer to work with other (unitarily equivalent) regular covariant representations of $\Sigma$ on $Y^G$ associated with $\pi:A \to \L(Y)$. For completeness we mention two of them here.

\medskip 
\begin{itemize}
\item[a)] $(\tilde{\pi}\,',  \tilde{\lambda}_\pi')$   is given by \begin{align}
(\tilde{\pi}\,'(a)\xi)(h) & = \pi\big(\alpha_{h^{-1}}(a)\big)\,\xi(h) \,,
\hspace{5ex}  a \in A, \, \xi \in Y^G, \, h \in G, \ \\
(  \tilde{\lambda}_\pi'(g)\xi)(h) & = 
\pi\big(\sigma(h^{-1},g)\big)\,\xi(g^{-1}h) \,, 
\quad \quad    g,h \in G, \, \xi \in Y^G \,.
\end{align}
Letting $S$ be the  operator in $\L(Y^G)$ given by $(S\xi)(g)=\pi(\sigma(g^{-1},g)) \xi(g)$, 

one easily verifies that
$S$ is unitary and $S\, (\tilde{\pi}\times  \tilde{\lambda}_\pi)\,S^*= \tilde{\pi}\,' \times  \tilde{\lambda}_\pi'$.

\medskip 
\item[b)] $(\tilde{\pi}\,'',\tilde{\rho}_\pi)$    is given by 
\begin{align}
(\tilde{\pi}\,''(a)\xi)(h) & = \pi\big(\alpha_{h}(a)\big)\,\xi(h) \,,
\hspace{7ex}  a \in A, \, \xi \in Y^G, \, h \in G, \ \\
(\tilde{\rho}_\pi(g)\xi)(h) & = 
\pi\big(\sigma(h, g)\big)\,\xi(hg) \,, 
 \quad \quad \quad   g,h \in G, \, \xi \in Y^G \,.
\end{align}
Letting $T$ be the  operator in $\L(Y^G)$ given by $(T\xi)(g)=\xi(g^{-1})$, one 
checks without difficulty that
$T$ is an  involutive unitary which satisfies $T \,(\tilde{\pi}\,'\times  \tilde{\lambda}_\pi')\,T= \tilde{\pi}\,'' \times\tilde{\rho}_\pi$.

\end{itemize}

There is another interesting representation of $C^*_r(\Sigma)$  
on a certain Hilbert $A$-module $A^\Sigma$,
which will provide a convenient framework to deal with Fourier analysis on $C^*_r(\Sigma)$.
The module $A^\Sigma$ is defined as follows (cf. \cite[p.\ 302]{AD1} for a similar construction):

\medskip \noindent We set   
$$A^\Sigma= \Big\{\xi: G \to A \ | \ \sum_{g \in G}  \alpha_g^{-1}\big(\xi(g)^*\, \xi(g)\big) \ 
\mbox{is norm-convergent in  $A$} \Big\} \, $$
and endow this vector space with the $A$-valued scalar product 
$$\langle\xi,\eta \rangle_\alpha = \sum_{g\in G} \alpha_g^{-1}\big(\xi(g)^*\eta(g)\big)\,, \quad \xi, \eta \in A^\Sigma$$ and the 
right action of $A$ given by
$$(\xi \times a )(g) = \xi(g)\,\alpha_g(a)  \,, \quad \xi \in A^\Sigma,\,  a \in A, \, g \in G\,.$$
 
Then $A^\Sigma$ becomes a Hilbert $A$-module containing $C_c(G,A)$ as a dense submodule. By construction, the map $J: A^G \to A^\Sigma$ defined by
$$(J\xi)(g)=\alpha_g(\xi(g))\, , \quad \xi \in A^G, \, g \in G\,,$$ is a unitary operator in $\L(A^G, A^\Sigma)$,
with $$(J^*\xi')(g)=\alpha_g^{-1}(\xi'(g))\, , \quad \xi' \in A^\Sigma, \, g \in G\,.$$
We will denote the norm in $A^\Sigma$ by $\|\cdot\|_\alpha$, i.e. we set
$$\|\xi\|_\alpha = \big\| \,\sum_{g\in G} \alpha_g^{-1}\big(\xi(g)^*\xi(g)\big) \,\big\|^{1/2}\,, \quad \xi \in A^\Sigma\,.$$
As $A^G$ and $A^\Sigma$ are unitarily equivalent via $J$, we  obtain a covariant representation $(\ell_\Sigma, \lambda_\Sigma)$ of $\Sigma$ on $A^\Sigma$ by setting $$\ell_\Sigma(a) = J \,\tilde{\ell}(a) \, J^* \, , \quad  \lambda_\Sigma(g) =  J \,\tilde{\lambda}_\ell(g) \, J^* $$
for $a\in A$, $g \in G$. A short computation gives the following expressions:
\begin{equation}\label{LS}(\ell_\Sigma(a)\xi)(h)  = a\, \xi(h)\, , 
\end{equation}
\begin{equation}\label{laS}
(\lambda_\Sigma(g)\xi)(h) = 
\alpha_g(\xi(g^{-1}h))\, \sigma(g, g^{-1}h)
\end{equation}
where $\xi \in A^\Sigma, \, h \in G$.

By construction, the representation $\Lambda_\Sigma= \ell_\Sigma \times \lambda_\Sigma$ of $C^*(\Sigma)$ on $A^\Sigma$  is 
unitarily equivalent to $\Lambda=\tilde{\ell} \times \tilde{\lambda}_\ell$. Moreover, using (\ref{LS}) and (\ref{laS}), one readily sees that it satisfies the nice formula
$$\Lambda_\Sigma(f) \, \xi = f \ast \xi\,,  \quad \, f\in C_c(\Sigma)\,, \, \xi \in A^\Sigma\,, $$
where the convolution $ f \ast \xi$ is defined in the same way as in equation (\ref{convol}). 
  We will henceforth sometimes identify $C^*_r(\Sigma)$
  with $\Lambda_\Sigma(C^*(\Sigma))$ and indicate this by writing  $C^*_r(\Sigma) \subseteq \L(A^\Sigma)$.

\section{Conditional expectations and the Fourier transform}

It is well known that 
there is a canonical faithful conditional expectation $E$ from $ C^*_r(\Sigma)$ onto  the canonical copy of $A$ inside $ C^*_r(\Sigma)$.  This expectation may then be used to define the Fourier coefficients and the Fourier transform of any element of $ C^*_r(\Sigma)$ (see e.g.\ \cite{ZM} and \cite{Bed}).
In fact, these notions are most easily introduced (in the reverse order) by letting $C^*_r(\Sigma)$ act on $A^\Sigma$.  

\medskip We first set $\xi_0 = 1\odot \delta_e \in A^\Sigma$. Then, given $x\in C^*_r(\Sigma) \subseteq \L(A^\Sigma)$, we define its {\it Fourier transform} $\, \widehat{x} \in A^\Sigma$ by $$\widehat{x} = x \, \xi_0\, \,$$
and call $\, \widehat{x}(g)\in A$ the {\it Fourier coefficient} of $\, x $ at $\,g\in G$.

\medskip Clearly, the 
{\it Fourier transform} $\, x \to \widehat{x}$ from $C^*_r(\Sigma)$ into $A^\Sigma$ is  linear.  
Moreover, it is not difficult to verify that it possesses the following properties:

\smallskip \begin{itemize}
\item[(i)]  $\widehat{\Lambda_\Sigma(f)} 
= f$ whenever $f \in C_c(\Sigma)$.

\smallskip  Especially,\,  $\widehat{\ell_\Sigma(a)} = a\odot \delta_e$, 
\, \, $\widehat{\lambda_\Sigma(g)} = 1\odot\delta_g$.

 \item[(ii)] $x \, \xi= \widehat{x} \ast \xi$ whenever $x \in C^*_r(\Sigma), \, \xi \in C_c(G,A) $.\footnote{The  convolution of $ \widehat{x} $ and $\xi$ is defined in the same way as in equation (\ref{convol}). This makes sense as $\xi$ is assumed to have finite support.}

 \item[(iii)] The Fourier transform $\, x \to \widehat{x}$ is injective. 

\item[(iv)] For $ x \in C^*_r(\Sigma) $ we have 
$\, \|\widehat{x}\|_\infty \leq \|\widehat{x}\|_\alpha \leq \|x\|$ 

where $\, \|\widehat{x}\|_\infty = \sup_{g\in G} \|\widehat{x}(g)\|$, by definition.

\item[(v)] $\widehat{x\,y} = \widehat{x}\, \ast \, \widehat{y}$ whenever $x \in  C^*_r(\Sigma), \,y \in \Lambda(C_c(\Sigma))$.\footnote{\,This probably also holds when $ y \in C^*_r(\Sigma)$, but an extra effort is then needed to show that the convolution product makes sense in this case.}

\item[(vi)] $\widehat{x^*} = \widehat{x}^{\,*}$ whenever $x \in  C^*_r(\Sigma)$ (and $\widehat{x}^{\,*}$ is defined as in eq.\ (\ref{star})).

\end{itemize}

\medskip Next, we  define a map $E_A: C^*_r(\Sigma) \to A$ by $$E_A(x) = \widehat{x}(e)\,.$$
Clearly, $E_A$ is linear, bounded with norm one and satisfies $E_A(x)= \langle \xi_0 \,, \, x\,\xi_0 \rangle_\alpha \,$. 

\medskip Moreover, the following properties are almost immediate:
\begin{itemize}
\item[(i)] $E_A(\Lambda_\Sigma(f)) = f(e)\, , \, f\in C_c(\Sigma)$.

\smallskip  Especially, $E_A(\ell_\Sigma(a))= a$\,  and  $E_A(\lambda_\Sigma(g)) = 0$  when $g \neq e\,.$
\item[(ii)] $E_A( x\, \lambda_\Sigma(g)^*) = \widehat{x}(g)\, , \quad g \in G$.
\item[(iii)] $E_A(x^*x) = \|\widehat{x}\|_\alpha^{\,2}$ \, , \quad $x \in C^*_r(\Sigma)$.
\item[(iv)] $E_A\big(\lambda_\Sigma(g)\, x \,\lambda_\Sigma(g)^*\big) = \alpha_g(E_A(x))\, $, \quad $g \in G$\,, \,$x \in C^*_r(\Sigma)$.  
\end{itemize}

Letting $E_\Sigma: C^*_r(\Sigma) \to \ell_\Sigma(A)$ be the linear map given by $E_\Sigma= \ell_\Sigma\circ E_A\,$, we get a norm one projection onto  $\ell_\Sigma(A)$, hence a conditional expectation, which is faithful (using (iii) and the injectivity of $x \to \widehat{x}$). Moreover, using (i), we see that it satisfies $E_\Sigma(\lambda_\Sigma(g)) = 0$ whenever  $g \in G\,,\,  g \neq e\,.$ 

\medskip The existence of such a conditional expectation characterizes (up to weak equivalence) $\Lambda_\Sigma$, and thereby also $\Lambda$  (see \cite[Theoreme 4.22]{ZM} for a similar result in the central case):

\begin{proposition}\label{CondE} Let $\phi=\pi\,\times\, u$ be a homomorphism of $C^*(\Sigma)$ into some $C^*$-algebra and set $B= \phi (C^*(\Sigma))$. 
Then the following conditions are equivalent:
\begin{itemize}
\item[(i)] $\phi$ is weakly equivalent to $\Lambda_\Sigma$\,.
\item[(ii)] $\phi$ is weakly equivalent to $\Lambda$\,.
\item[(iii)]  $\pi$ is injective and there exists a faithful conditional expectation $F$ from $B$ onto $\pi(A)$ satisfying $F(u(g)) =0$ for  all $g \in G\,,\,  g \neq e\,.$
\end{itemize}
Especially, if (iii) holds, then $B$ is isomorphic to $C^*_r(\Sigma)$.
\end{proposition}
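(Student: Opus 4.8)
The plan is to prove the chain of implications (i) $\Rightarrow$ (ii) $\Rightarrow$ (iii) $\Rightarrow$ (i), exploiting the conditional expectation $E_\Sigma$ already constructed on $\Lambda_\Sigma(C^*(\Sigma))$ and the fact, established just above, that $\Lambda_\Sigma$ and $\Lambda$ are unitarily equivalent (via $J$), so that weak equivalence to one is the same as weak equivalence to the other. That observation makes (i) $\Leftrightarrow$ (ii) essentially trivial, so the real content is (ii) $\Rightarrow$ (iii) and (iii) $\Rightarrow$ (i).

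\emph{(ii) $\Rightarrow$ (iii).} Assume $\phi = \pi \times u$ is weakly equivalent to $\Lambda$, so there is an isomorphism $\psi$ from $\Lambda(C^*(\Sigma)) = C^*_r(\Sigma)$ onto $B = \phi(C^*(\Sigma))$ with $\psi \circ \Lambda = \phi$. First I would note $\pi$ is injective: since $\Lambda_\Sigma$ restricted to $\ell_\Sigma(A)$ is faithful (indeed $\tilde\ell$, and hence $\ell_\Sigma$, is isometric on $A$) and $\psi$ is an isomorphism, $\pi = \phi \circ i_A = \psi \circ \Lambda \circ i_A$ is a composition of injective maps on $A$. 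Then transport the expectation: define $F = \psi \circ E_\Sigma' \circ \psi^{-1}$ on $B$, where $E_\Sigma'$ denotes the expectation $E_\Sigma$ transported through the unitary equivalence to live on $\Lambda(C^*(\Sigma))$; equivalently work directly with $\Lambda_\Sigma$ and $E_\Sigma$ and then use the unitary $J$. One checks $F$ is a faithful conditional expectation onto $\psi(\ell_\Sigma(A)) = \phi(i_A(A)) = \pi(A)$, and $F(u(g)) = F(\phi(1 \odot \delta_g)) = \psi(E_\Sigma(\lambda_\Sigma(g))) = 0$ for $g \neq e$ by the last property of $E_\Sigma$ recorded above. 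Here one must be a little careful to identify $\phi(1\odot\delta_g)$ with $u(g)$ and $\phi(a\odot\delta_e)$ with $\pi(a)$, which is exactly the disintegration recalled in Section 2.

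\emph{(iii) $\Rightarrow$ (i).} This is the main obstacle and is proved by a standard Fell-type argument. Since $\pi$ is injective, the regular covariant representation $(\tilde\pi, \tilde\lambda_\pi)$ it induces satisfies $\tilde\pi \times \tilde\lambda_\pi$ weakly equivalent to $\Lambda$ (shown in Section 2 via $\pi_*$), hence weakly equivalent to $\Lambda_\Sigma$. So it suffices to show $\ker\phi = \ker\Lambda_\Sigma$. The inclusion $\ker\phi \supseteq \ker(\tilde\pi\times\tilde\lambda_\pi)$, i.e.\ $\phi$ weakly contained in the regular representation, would follow from the universal property only in the other direction; the point of (iii) is to get the reverse containment $\ker\phi \subseteq \ker\Lambda_\Sigma$. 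For this, take $x \in C^*(\Sigma)$ with $\phi(x) = 0$; I want $\Lambda_\Sigma(x) = 0$, equivalently (by faithfulness of $E_\Sigma$) $E_\Sigma(\Lambda_\Sigma(x)^*\Lambda_\Sigma(x)) = 0$, equivalently (by property (iii) of $E_A$) all Fourier coefficients $\widehat{\Lambda_\Sigma(x)}(g)$ vanish. Now one shows the Fourier coefficients of $\Lambda_\Sigma(x)$ are determined by $\phi$: approximating $x$ by elements $f \in C_c(\Sigma)$, one has $\widehat{\Lambda_\Sigma(f)}(g) = f(g) = \alpha_g\big(E_A(\Lambda_\Sigma(f)\lambda_\Sigma(g)^*)\big)$-type identities, while on the $\phi$ side $F(\phi(f) u(g)^*) = \pi(f(g))$ by property (ii)-analogue for $F$ and injectivity of $\pi$; passing to the limit, $\phi(x) = 0$ forces $F(\phi(x)u(g)^*) = 0$, hence $\pi(f_n(g)) \to 0$, hence $f_n(g) \to 0$ in $A$ (as $\pi$ is isometric), hence $\widehat{\Lambda_\Sigma(x)}(g) = \lim f_n(g) = 0$ for every $g$. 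Therefore $\Lambda_\Sigma(x) = 0$, giving $\ker\phi \subseteq \ker\Lambda_\Sigma$; the reverse inclusion is the weak containment of $\Lambda_\Sigma$-type representations discussed above, so $\phi$ is weakly equivalent to $\Lambda_\Sigma$. The final sentence, that $B \cong C^*_r(\Sigma)$, is then immediate from the definition of weak equivalence together with $\Lambda_\Sigma(C^*(\Sigma)) \cong C^*_r(\Sigma)$.

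The delicate point throughout is continuity of the ``$g$-th Fourier coefficient'' maps and the identification of these coefficients on both sides via $E_A$ and $F$; once one knows $\|\widehat{x}(g)\| \le \|x\|_r$ (property (iv) of the Fourier transform) and the analogous bound for $F$, the limiting arguments are routine and the equivalence follows. I would organize the write-up so that the transported-expectation bookkeeping for (ii) $\Rightarrow$ (iii) is done once and reused, and so that the Fell absorption fact $\tilde\pi\times\tilde\lambda_\pi \sim \Lambda$ for faithful $\pi$ is invoked verbatim from Section 2 rather than reproved.
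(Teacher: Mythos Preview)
Your handling of (i)$\Leftrightarrow$(ii) and (ii)$\Rightarrow$(iii) is fine and matches the paper. The gap is in (iii)$\Rightarrow$(i): your Fourier-coefficient argument correctly gives $\ker\phi\subseteq\ker\Lambda_\Sigma$, but you then assert that the reverse inclusion $\ker\Lambda_\Sigma\subseteq\ker\phi$ is ``the weak containment of $\Lambda_\Sigma$-type representations discussed above.'' Nothing you cited provides this. The fact from Section~2 that $\tilde\pi\times\tilde\lambda_\pi$ is weakly equivalent to $\Lambda$ when $\pi$ is faithful relates regular representations to one another; it says nothing about the arbitrary covariant representation $\phi=\pi\times u$. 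If that inclusion were automatic, every system would be regular. Notice also that you never invoke the faithfulness of $F$---which is exactly the hypothesis needed for this direction.

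The repair is immediate and symmetric to what you already did: your approximation argument really shows that the continuous linear maps $x\mapsto\pi^{-1}\bigl(F(\phi(x)\,u(g)^*)\bigr)$ and $x\mapsto\widehat{\Lambda_\Sigma(x)}(g)$ agree on all of $C^*(\Sigma)$ for every $g$. So if $\Lambda_\Sigma(x)=0$, apply this at $g=e$ to $x^*x\in\ker\Lambda_\Sigma$ to get $F\bigl(\phi(x)^*\phi(x)\bigr)=0$, whence $\phi(x)=0$ by faithfulness of $F$. The paper's proof is a streamlined version of this same idea: it sets $\mathcal E=E_A\circ\Lambda_\Sigma$, checks the single identity $F\circ\phi=\pi\circ\mathcal E$ on $C_c(\Sigma)$ (hence everywhere by density), and then compares the positive parts of the two kernels directly---faithfulness of $E_A$ giving $\mathcal I^+\subseteq\mathcal J^+$ and faithfulness of $F$ giving $\mathcal J^+\subseteq\mathcal I^+$---thereby avoiding the $f_n$-approximation and the coefficient-by-coefficient bookkeeping altogether.
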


\begin{proof} Since $\Lambda$  is weakly equivalent to $\Lambda_\Sigma$, it is clear that $(i)$ is equivalent to $(ii)$. We will show that $(i)$ is equivalent to $(iii)$. 

Assume that $(i)$ holds. Then there exists an isomorphism $\psi$ from $C_r^*(\Sigma)$ onto $B$ satisfying $\psi \circ \Lambda_\Sigma= \phi$.  Especially, $\psi\circ \ell_\Sigma= \pi$. As $\ell_\Sigma$ is injective, it follows that $\pi$ is injective. Moreover we can define  a linear map $F: B \to \pi(A)$ by $F= \pi \circ E_A \circ \psi^{-1}$. Clearly, $F$ is bounded with norm one. Since $\pi$ is injective and $E_A$ is faithful, $F$ is faithful. Further, we have
$$ F((\phi)(f)) = \pi((E_A( \Lambda_\Sigma(f))) = \pi(f(e)) \, , \quad f\in C_c(\Sigma)\,.$$ 
This means that $F(\pi(a)) = \pi(a)\,$ for all $a\in A$ and $F(u(g))= 0\, $ for all $\,g \in G, \, g\neq e$. It follows that $F$ is a norm one projection onto $\pi(A)$, hence a conditional expectation satisfying $(iii)$.
 
  \medskip Conversely, assume that $(iii)$ holds. Define a linear map $\mathcal{E}: C^*(\Sigma) \to A$ by $\mathcal{E}= E_A\circ\Lambda_\Sigma$. Clearly, $\mathcal{E}$ is bounded with norm one.
 
 Let $f\in C_c(\Sigma)$. Then $\pi(\mathcal{E}(f)) = \pi(E_A(\Lambda_\Sigma(f))) = \pi(f(e))$.  On the other hand, $(iii)$ gives that $F((\pi\times u)(f)) = \pi(f(e))$. As $C_c(\Sigma)$ is dense in $C^*(\Sigma)$, this implies that 
 \begin{equation}\label{E}
 F\circ \phi= \pi \circ \mathcal{E}\,.
 \end{equation} 
 Since $\pi$ is injective (by assumption) and $F$ is onto $\pi(A)$, this means that
 \begin{equation}\label{F}
 \mathcal{E}= \pi^{-1} \circ F\circ \phi\,.
 \end{equation}  
 Set $\mathcal{I}= {\rm ker }(\phi)\, , \, \mathcal{J}= {\rm ker }(\Lambda_\Sigma)$. 
 
 Consider $x \in \mathcal{I}^{+}.$
 Then equation (\ref{F}) gives that $\mathcal{E}(x)=0$, hence that $\Lambda_\Sigma(x)=0$ (since $E_A$ is faithful). Thus, $x  \in \mathcal{J}^{+}$.
  
 Conversely, let $x \in \mathcal{J}^{+}.$ Then  $\mathcal{E}(x)=E_A(\Lambda_\Sigma(x))= 0$ , so equation (\ref{E}) gives that $F(\phi(x))=0$. But $\phi(x)$ is positive and $F$ is faithful (since (iii) is assumed to hold), so $\phi(x)=0$, and it follows that $x\in \mathcal{I}^{+}$. 
 
 We have thereby shown that $\mathcal{I}^{+}= \mathcal{J}^{+}$, so $\mathcal{I} = \mathcal{J}$, which means that $(i)$ holds. 
\end{proof}

Whenever a homomorphism $\phi$ as in Proposition \ref{CondE} satisfy condition $(i)$,  and therefore also $(ii)$ and $(iii)$, we will say that $\phi$ is a {\it regular homomorphism of $C^*(\Sigma)$} and call $B=\phi(C^*(\Sigma))$ a {\it reduced $C^*$-crossed product associated with $\Sigma$}. 

Letting $\psi$ denote the isomorphism from $C_r^*(\Sigma)$ onto $B$ satisfying $\psi \circ \Lambda_\Sigma= \phi $,  we may then define the Fourier transform $y \to \widehat{y}$ from $B$ into $A^\Sigma$ by setting $$\widehat{y} = \widehat{\psi^{-1}(y)}\,, \quad b \in B\,.$$
It is then easy, but informative to check that $$\widehat{y}(g) = (\pi^{-1}\circ F)(y\,u(g)^*)\,, $$
that is, \,
$\pi(\widehat{y}(g))= F(y\,u(g)^*)$ for all $g \in G$. Thereby one recovers  the "usual" way to define the Fourier coefficients of elements in a reduced $C^*$-crossed product.  

Moreover, when $y \in B$, 
the formal sum 
$$\sum_{g \in G} \pi(\widehat{y}(g))\, u(g)$$ is called the {\it Fourier series} of $y$. 
If $y= \phi(f)$ for some $f \in C_c(\Sigma)$ with finite support $K$, then $\widehat{y}= f$, so  the Fourier series of $y$ is just a finite sum over $K$ and its sum is equal to $y$. 
However, as is well known, this series
is in general not necessarily norm-convergent in $B$.
We will study convergence and summation of such Fourier series in a forthcoming paper \cite{BeCo3}.

\section{On Fell's absorption principle and equivariant representations}\label{equivcorr}

Throughout this section, we suppose that $\Sigma=(A, G, \alpha, \sigma)$ is given. 
The classical  Fell's absorption principle for unitary representations of a group (see \cite{Di}) may be generalized 
to $C^*$-dynamical systems in several ways. 

One version of Fell's absorption principle is as follows (see \cite[Proposition 4.1.7]{BrOz} for the untwisted case).

\begin{proposition}\label{fell2}
Let $(\pi, u)$ be a covariant representation of $\Sigma$ on a Hilbert $B$-module $Y$ and let $\lambda$ denote the left regular representation of $G$ on $\ell^2(G)$. 

\smallskip Then $(\pi \otimes \iota\, , u \otimes \lambda) $ is a covariant representation of $\Sigma$ on $Y \otimes \ell^2(G)\simeq Y^G$ and $ (\pi \otimes \iota) \times (u \otimes \lambda)$ is unitarily equivalent to $\tilde{\pi} \times \tilde{\lambda}_\pi$.
\end{proposition}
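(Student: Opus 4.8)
The plan is to verify the covariance relations directly and then produce an explicit unitary intertwiner. First I would check that $(\pi\otimes\iota, u\otimes\lambda)$ is a covariant representation of $\Sigma$ on $Y\otimes\ell^2(G)$. The pair $(u\otimes\lambda)(g) = u(g)\otimes\lambda(g)$ clearly lands in $\U(\L(Y\otimes\ell^2(G)))$, and since $\lambda$ is a genuine unitary representation of $G$ while $u$ satisfies $u(g)u(h)=\pi(\sigma(g,h))u(gh)$, we get
\[
(u\otimes\lambda)(g)(u\otimes\lambda)(h) = \big(\pi(\sigma(g,h))\otimes I\big)(u\otimes\lambda)(gh) = (\pi\otimes\iota)(\sigma(g,h))\,(u\otimes\lambda)(gh),
\]
and the covariance relation $(\pi\otimes\iota)(\alpha_g(a)) = (u\otimes\lambda)(g)(\pi\otimes\iota)(a)(u\otimes\lambda)(g)^*$ follows by tensoring the covariance relation for $(\pi,u)$ with $\lambda(g)\lambda(g)^*=I$. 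This part is routine.

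The substance is the unitary equivalence. I would fix the standard orthonormal basis $\{\delta_g\}_{g\in G}$ of $\ell^2(G)$ and identify $Y\otimes\ell^2(G)$ with $Y^G$ via $y\otimes\delta_g \mapsto (h\mapsto \delta_{g,h}\,y)$, i.e.\ a vector $\xi\in Y^G$ corresponds to $\sum_{g}\xi(g)\otimes\delta_g$. Under this identification, $(\pi\otimes\iota)\times(u\otimes\lambda)$ acts on $Y^G$ by
\[
\big((\pi\otimes\iota)(a)\xi\big)(h) = \pi(a)\,\xi(h), \qquad \big((u\otimes\lambda)(g)\xi\big)(h) = u(g)\,\xi(g^{-1}h),
\]
whereas $(\tilde\pi,\tilde\lambda_\pi)$ acts by $(\tilde\pi(a)\xi)(h)=\pi(\alpha_h^{-1}(a))\xi(h)$ and $(\tilde\lambda_\pi(g)\xi)(h)=\pi(\alpha_h^{-1}(\sigma(g,g^{-1}h)))\xi(g^{-1}h)$. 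So I would define $W\in\L(Y^G)$ by $(W\xi)(h) = u(h)^*\,\xi(h)$; the correcting factor $u(h)^*$ is exactly what converts $\pi(a)$ into $\pi(\alpha_h^{-1}(a))=u(h)^*\pi(a)u(h)$ on the diagonal. Since each $u(h)$ is unitary in $\L(Y)$, $W$ is adjointable with $(W^*\xi)(h)=u(h)\xi(h)$ and is unitary. I would then compute $W\,(\pi\otimes\iota)(a)\,W^* = \tilde\pi(a)$ directly, and for the group part compute
\[
\big(W\,(u\otimes\lambda)(g)\,W^*\xi\big)(h) = u(h)^*\,u(g)\,u(g^{-1}h)\,\xi(g^{-1}h),
\]
and then use $u(g)u(g^{-1}h) = \pi(\sigma(g,g^{-1}h))\,u(h)$ together with the covariance relation to rewrite $u(h)^*\pi(\sigma(g,g^{-1}h))u(h) = \pi(\alpha_h^{-1}(\sigma(g,g^{-1}h)))$, which yields exactly $(\tilde\lambda_\pi(g)\xi)(h)$. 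Hence $W$ intertwines the two covariant representations, and therefore $W\,\big((\pi\otimes\iota)\times(u\otimes\lambda)\big)(f)\,W^* = (\tilde\pi\times\tilde\lambda_\pi)(f)$ for all $f\in C_c(\Sigma)$, which extends to $C^*(\Sigma)$ by continuity.

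The main obstacle is essentially bookkeeping rather than conceptual: getting the conventions right so that the cocycle identity $u(g)u(h)=\pi(\sigma(g,h))u(gh)$ is applied with the correct arguments ($g$ and $g^{-1}h$), and making sure the twist factors $\sigma(g,g^{-1}h)$ end up inside $\pi\circ\alpha_h^{-1}$ on the correct side. One should also be a little careful that $W$ genuinely defines an adjointable operator on $Y^G$ — this is clear since it acts coordinatewise by fixed unitaries of $\L(Y)$, so it preserves the convergence condition $\sum_h\langle\xi(h),\xi(h)\rangle$ defining $Y^G$ and is isometric — but it is worth stating. I expect the whole argument to be short once the identification $Y\otimes\ell^2(G)\simeq Y^G$ and the formula for $W$ are in place.
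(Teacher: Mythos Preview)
Your argument is correct and follows exactly the standard direct approach the paper has in mind when it refers to \cite[Proposition 4.1.7]{BrOz}: define the diagonal unitary $(W\xi)(h)=u(h)^*\xi(h)$ on $Y^G$ and verify by hand that it conjugates $(\pi\otimes\iota,\,u\otimes\lambda)$ onto $(\tilde\pi,\tilde\lambda_\pi)$, the only twist-specific step being the use of $u(g)u(g^{-1}h)=\pi(\sigma(g,g^{-1}h))u(h)$. The paper also notes (Example~\ref{fell2again}) that the result can alternatively be recovered from the general Fell absorption principle (Theorem~\ref{fell}) applied to the trivial equivariant representation $(\ell,\alpha)$, but your direct computation is precisely the intended elementary proof.
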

\begin{proof} Similar to the proof given in \cite{BrOz}. See also Example \ref{fell2again}. 
\end{proof}

We will need a more sophisticated version of Fell's absorption principle, which will rely on the following concept.
\begin{definition} By an 
{\it equivariant representation} 
of $\Sigma= (A, G, \alpha, \sigma)$ 
on
a Hilbert $A$-module $X$ 
we will mean  a pair $(\rho, v)$ where 
 $\rho : A \to \L(X)$ is a representation of $A$ on $X$ and  $v$ is a map from $G$ into the group $\mathcal{I}(X)$ consisting of all $\mathbb{C}$-linear, invertible, bounded maps from $X$ into itself, which  satisfy:
\begin{itemize}
\item[(i)]  \quad $\rho(\alpha_g(a))  = v(g) \, \rho(a) \, v(g)^{-1}\, , \quad  \quad g\in G\,, \,  a \in A$
\item[(ii)]  \quad$v(g)\, v(h)  = {\rm ad}_\rho(\sigma(g,h)) \, v(gh) \, , \quad   \quad g, h \in G$
\item[(iii)] \quad $\alpha_g\big(\langle x \, ,\, x' \rangle\big)   = \langle v(g) x\, ,\, v(g) x' \rangle\, , \quad  \quad g\in G\, , \, x,\,  x' \in X\,$ 
\item[(iv)]  \quad$v(g)(x \cdot a)  = (v(g) x)\cdot \alpha_g(a)\, , 
\quad  \quad \, g \in G,\, x\in X,\, a \in A$. 
\end{itemize}
In (ii) above, $ {\rm ad}_\rho(\sigma(g,h)) \in \mathcal{I}(X) $ is defined by
$${\rm ad}_\rho(\sigma(g,h)) \,x = \big(\rho(\sigma(g,h))\, x \big)\cdot \sigma(g,h)^* \, , \quad g, h \in G,\, x \in X. $$
\end{definition}

\begin{remark} Assume that $\sigma$ is trivial. Then (ii) just says that $g \to v(g)$ is a group homomorphism. Such a homomorphism is called an $\alpha$-equivariant action of $G$ on $X$ whenever (iii) and (iv) hold  (see \cite{Com, AD1}); moreover, if (i) also holds, then $v$ is said to be covariant with $\rho$ (relative to $\alpha$). When $\sigma$ is not trivial, this terminology can not be carried over verbatimly, because both $\rho$ and $v$ are involved in (ii). 
\end{remark}

\begin{remark} Note that the Hilbert $A$-module $X$ above becomes an $A$--$A$ bimodule\footnote{sometimes called a $C^*$-correspondence over $A$.} when the left action of $A$ on $X$ is defined by  $$a�\cdot x = \rho(a)x \,, \quad \, a \in A,\, x\in X\,.$$ Then we have  $${\rm ad}_\rho(\sigma(g,h)) \,x = \sigma(g,h) \cdot x \cdot \sigma(g,h)^* \, , \quad g, h \in G,\, x \in X, \,  \, $$
explaining our choice of notation. 
To match the terminology used in \cite{EKQR} when $\sigma$ is trivial,  we could have said that $v$ is an equivariant $(\alpha, \sigma)$-$(\alpha, \sigma)$ 
(or $\Sigma$-) compatible action of $G$ on the Hilbert $A$--$A$ bimodule $X$ when (i)-(iv) are satisfied.  
 
 \smallskip As a bimodule, $X$ has a {\it central part} $Z_X$, given by  $$Z_X= \{�z \in X\mid a\cdot z = z\cdot a \, \, \text{for all} \, \, a \in A\}\,.$$ 
 It is straightforward to check that $Z_X$ is a closed subspace of $X$  which is invariant under $v$, that is, $v(g)\, z \in Z_X$ for all $\, g\in G$ and $\, z \in Z_X$, and invariant under the left and right actions of $Z(A)$. Note also that $\langle z, z'\rangle \in Z(A)$ for all $z, z' \in Z_X$, since for all $a\in A$, we  have:
 $$ \langle z, z'\rangle a= \langle z, z'\cdot a\rangle=\langle z, a\cdot z'\rangle = \langle a^* \cdot z, z'\rangle = 
 \langle z\cdot a^*, z'\rangle = a \langle z, z'\rangle\,.$$
 This means that $Z_X$ becomes a Hilbert $Z(A)$-module. We may then let $\rho': Z(A) \to \L(Z_X)$ and $v':G \to 
 \mathcal{I}(Z_X)$ be defined by $$\rho'(c)z= c\cdot z = \rho(c)z\, , \, v'(g)z= v(g) z$$
 for all $c \in Z(A), z\in Z_X, g \in G$. 

Now, each automorphism $\alpha_g$ restricts to an automorphism $\alpha'_g$ of  $Z(A)$. As $$\ \alpha'_g \circ \alpha'_h = {\rm Ad}(\sigma(g,h)) \circ \alpha'_{gh} = \alpha'_{gh}$$ for every $g,h \in G$,  $\alpha'$ gives an (untwisted) action of $G$ on $Z(A)$. 
Letting $\Sigma'= (Z(A), G, \alpha', 1)$ denote the associated system\footnote{If $\sigma$ is central, one can also consider the system $(Z(A), G, \alpha', \sigma)$.
}, it is then clear that the following holds:
\end{remark}

 \begin{proposition} \label{centralmod} If $(\rho, v)$ is an equivariant representation of $\Sigma$ on an Hilbert $A$-module $X$, then  $(\rho', v')$ is an equivariant representation of $\Sigma'$ on the Hilbert $Z(A)$-module $Z_X$.
\end{proposition}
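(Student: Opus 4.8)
The plan is to verify the four axioms (i)--(iv) for the pair $(\rho', v')$ directly, using that they already hold for $(\rho, v)$ together with the observations collected in the preceding remark. First I would record the basic sanity checks: $Z_X$ is a Hilbert $Z(A)$-module (established in the remark, since $\langle z, z'\rangle \in Z(A)$ for $z, z' \in Z_X$), $\rho'(c) = \rho(c)|_{Z_X}$ is a well-defined adjointable operator on $Z_X$ because $Z_X$ is invariant under the left action of $Z(A)$, and $v'(g) = v(g)|_{Z_X}$ is a well-defined element of $\mathcal{I}(Z_X)$ because $v(g)$ maps $Z_X$ into $Z_X$ (also noted in the remark) and $v(g)^{-1} = {\rm ad}_\rho(\sigma(g,g^{-1})^*)\,v(g^{-1})$-type formulas show the inverse also preserves $Z_X$; alternatively one observes directly that $a \cdot v(g)z = v(g)(v(g)^{-1}(a\cdot v(g)z))$ and uses (i) and (iv) to move $a$ past $v(g)$. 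That $\rho'$ is a $*$-homomorphism into $\L(Z_X)$ is immediate from the corresponding fact for $\rho$.

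Next I would check the axioms. For (i): for $c \in Z(A)$ we have $\alpha'_g(c) = \alpha_g(c) \in Z(A)$, and $\rho'(\alpha'_g(c)) = \rho(\alpha_g(c))|_{Z_X} = (v(g)\rho(c)v(g)^{-1})|_{Z_X} = v'(g)\rho'(c)v'(g)^{-1}$, using axiom (i) for $(\rho,v)$ and the invariance of $Z_X$. For (iii) and (iv): these are just the restrictions of the same identities for $(\rho,v)$, now with $a$ replaced by $c \in Z(A)$ and $x, x'$ replaced by $z, z' \in Z_X$, noting $\alpha'_g = \alpha_g|_{Z(A)}$ and that $\langle v(g)z, v(g)z'\rangle = \alpha_g(\langle z,z'\rangle) \in Z(A)$. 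The only axiom requiring a small argument is (ii). Here I would compute, for $z \in Z_X$,
\[
{\rm ad}_{\rho'}(\sigma(g,h)^{\,?})\,z ,
\]
but note the subtlety: $\sigma(g,h)$ need not lie in $Z(A)$ when $\sigma$ is merely a twisted action, so ${\rm ad}_{\rho'}$ of $\sigma(g,h)$ does not literally make sense in $\mathcal{I}(Z_X)$. The resolution is that for $z \in Z_X$ we have, by the bimodule formula in the remark, ${\rm ad}_\rho(\sigma(g,h))\,z = \sigma(g,h)\cdot z \cdot \sigma(g,h)^* = z$ (since $z$ is central, $\sigma(g,h)\cdot z = z \cdot \sigma(g,h)$, so the two factors cancel). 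Hence axiom (ii) for $(\rho,v)$ restricted to $Z_X$ reads $v(g)v(h)z = v(gh)z$ for all $z \in Z_X$, i.e.\ $v'(g)v'(h) = v'(gh) = {\rm ad}_{\rho'}(1)\,v'(gh)$, which is exactly axiom (ii) for the system $\Sigma' = (Z(A), G, \alpha', 1)$ with trivial cocycle.

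The main obstacle, such as it is, is precisely this last point: one must observe that the cocycle for $\Sigma'$ is trivial and reconcile this with axiom (ii), which in general involves ${\rm ad}_\rho(\sigma(g,h))$. Everything hinges on the cancellation ${\rm ad}_\rho(\sigma(g,h))|_{Z_X} = {\rm id}_{Z_X}$, which follows from centrality of the elements of $Z_X$; once this is in hand, (ii) for $(\rho', v')$ is simply the statement that $v'$ is a genuine homomorphism, matching the triviality of the cocycle of $\Sigma'$. All the remaining verifications are routine restrictions of the already-established identities for $(\rho,v)$, and I would present them tersely. A closing remark might note that when $\sigma$ is central one could instead take $\Sigma' = (Z(A), G, \alpha', \sigma)$, in which case ${\rm ad}_\rho(\sigma(g,h))$ does act on $Z_X$ and axiom (ii) is inherited verbatim; but the statement as phrased, with trivial cocycle, is the one that works in full generality. \qed
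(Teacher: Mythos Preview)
Your proposal is correct. The paper does not actually give a proof of this proposition: it simply asserts ``it is then clear that the following holds'' immediately after the remark setting up $Z_X$, $\rho'$, $v'$, and $\Sigma'$. Your write-up spells out precisely the routine verifications the paper suppresses, and your handling of axiom (ii)---observing that ${\rm ad}_\rho(\sigma(g,h))$ acts as the identity on $Z_X$ so that $v'$ becomes a genuine homomorphism, matching the trivial cocycle of $\Sigma'$---is exactly the one point that deserves a sentence and is the reason the paper chose the trivial cocycle for $\Sigma'$ in the first place.
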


\begin{example} The  {\it trivial equivariant representation} of $\Sigma$ is the equivariant representation $(\ell, \alpha)$ of $\Sigma$ on the $A$-module $A$ 
where $\ell: A \to \L(A)$ is defined as before (i.e. $\ell(a)\, a' = a\, a'$); as ${\rm Aut}(A) \subseteq \mathcal{I}(A)$, we have $\alpha: G \to  \mathcal{I}(A)$, so this definition makes sense. Trivially, we have $Z_A=Z(A)$ and $(\ell', \alpha')$ is the trivial equivariant representation of $\Sigma'$ on $Z(A)$.  
\end{example}

\begin{example} The {\it regular equivariant representation} of $\Sigma$ is the equivariant representation $(\check{\ell}, \check{\alpha})$  on $A^G$ defined by
 $$(\check{\ell}(a)\, \xi)(h) = a\, \xi(h)$$
$$(\check{\alpha}(g)\,\xi)(h) = \alpha_g(\xi(g^{-1}h))$$
where $a \in A, \, \xi \in A^G, \, g, h \in G$. 
The central part of $A^G$ is $$Z(A)^G = \{ \xi \in A^G \mid \xi(g) \in Z(A) \, \, \text{for all} \, \, g \in G\}\,.$$
Moreover, $(\check{\ell}{\, '}, \check{\alpha}\, ')$ is the regular equivariant representation of $\Sigma'$ on $Z(A)^G$.

\end{example}

The regular 
equivariant representation 
is induced from the trivial one according to the construction described in the next example.

\begin{example} Let $(\rho,v)$ be an equivariant 
representation of $\Sigma$ on a Hilbert $A$-module $X$.
The {\it induced regular  equivariant 
representation}
 $(\check{\rho},\check{v})$ 
of $\Sigma$ on $X^G$ associated with $(\rho,v)$ is defined by
\begin{align*}
(\check{\rho}(a)\xi)(h) & = \rho(a)\xi(h), \\
(\check{v}(g)\xi)(h) & = v(g)\xi(g^{-1}h) \ ,
\end{align*}
for all $a \in A, \, \xi \in X^G,\, g,h \in G$. We leave it as an exercise to verify that this gives an equivariant representation, the central part of $X^G$ is  $Z_X^G = \{ \xi \in X^G \mid \xi(g) \in Z_X \, \, \text{for all} \, \, g \in G\}$ and $(\check{\rho}{\, '}, \check{v}\, ')$ is the induced regular equivariant representation of $\Sigma'$ on $Z_X^G=(Z_X)^G$ associated with $(\rho', v')$.
\end{example}

\medskip {\it From now on, and throughout this section, we consider  an equivariant 
representation $(\rho,v)$ of $\Sigma$ on a Hilbert $A$-module $X$ and a covariant representation $(\pi,u)$ of $\Sigma$ on a Hilbert 
$B$-module $Y$}. 

\smallskip We may then form the Hilbert $B$-module $X\otimes_\pi Y$ and the canonical homomorphism $\pi_{*}: \L(X) \to \L(X\otimes_\pi Y)$, providing us with the representation $\pi_{*} \circ \rho$ of $A$ on $X\otimes_\pi Y$. Moreover, we have: 

\begin{lemma}  For each $g\in G$ there exists a unitary operator $w(g)$ on $X\otimes_\pi Y$ which satisfies
$$ w(g)(x\dot\otimes\, y) = v(g) x \, \dot\otimes \, u(g)y\, , \quad x \in X\, , \, y \in Y\,.$$ 
The resulting pair $(\pi_{*} \circ \rho\, ,\, w)$ is then a covariant representation of $\Sigma$ on $X\otimes_\pi Y$.
\end{lemma}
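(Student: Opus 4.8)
The plan is to construct $w(g)$ explicitly as a map on elementary tensors and verify it extends to a well-defined unitary, then check the covariance relations. First I would define, for fixed $g \in G$, the map $w(g)$ on the algebraic tensor product $X \odot Y$ by $w(g)(x \dot\otimes y) = v(g)x \dot\otimes u(g)y$; the content is to show this respects the relations defining the interior tensor product $X \otimes_\pi Y$, namely that it is compatible with $x\cdot a \dot\otimes y = x \dot\otimes \pi(a)y$ for $a \in A$, and that it is isometric for the $B$-valued inner product $\langle x \dot\otimes y, x' \dot\otimes y'\rangle = \langle y, \pi(\langle x, x'\rangle)y'\rangle$. Compatibility with the balancing relation uses property (iv) of the equivariant representation together with the covariance relation for $(\pi, u)$: $v(g)(x\cdot a) \dot\otimes u(g)y = (v(g)x)\cdot\alpha_g(a) \dot\otimes u(g)y = v(g)x \dot\otimes \pi(\alpha_g(a))u(g)y = v(g)x \dot\otimes u(g)\pi(a)y$, which matches $w(g)$ applied to $x \dot\otimes \pi(a)y$. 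For isometry, I would compute
\begin{align*}
\langle v(g)x \dot\otimes u(g)y, v(g)x' \dot\otimes u(g)y'\rangle &= \langle u(g)y, \pi(\langle v(g)x, v(g)x'\rangle)u(g)y'\rangle \\
&= \langle u(g)y, \pi(\alpha_g(\langle x, x'\rangle))u(g)y'\rangle \\
&= \langle u(g)y, u(g)\pi(\langle x,x'\rangle)u(g)^*u(g)y'\rangle \\
&= \langle y, \pi(\langle x, x'\rangle)y'\rangle,
\end{align*}
using property (iii) of $(\rho,v)$, the covariance relation for $(\pi,u)$, and that $u(g)$ is unitary. Hence $w(g)$ extends to an isometry on $X \otimes_\pi Y$; applying the same construction to $g^{-1}$ (or using that $v(g)^{-1} = {\rm ad}_\rho(\sigma(g,g^{-1})^*)\,v(g^{-1})$ up to the adjustment by $\sigma$, together with (ii)) produces a two-sided inverse, so $w(g)$ is unitary and adjointable with $w(g)^* = w(g)^{-1}$.

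Next I would verify that $(\pi_*\circ\rho, w)$ is a covariant representation of $\Sigma$ on $X\otimes_\pi Y$. The covariance relation $(\pi_*\circ\rho)(\alpha_g(a)) = w(g)(\pi_*\circ\rho)(a)w(g)^{-1}$ is checked on elementary tensors: the left side sends $x \dot\otimes y$ to $\rho(\alpha_g(a))x \dot\otimes y$, while the right side sends it to $w(g)\big(\rho(a)v(g)^{-1}x \dot\otimes u(g)^{-1}y\big) = v(g)\rho(a)v(g)^{-1}x \dot\otimes y$, and these agree by property (i) of $(\rho,v)$. For the twisted multiplicativity $w(g)w(h) = (\pi_*\circ\rho)(\sigma(g,h))\,w(gh)$, I would apply both sides to $x \dot\otimes y$. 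The left side gives $v(g)v(h)x \dot\otimes u(g)u(h)y$. Using (ii), $v(g)v(h)x = {\rm ad}_\rho(\sigma(g,h))v(gh)x = \rho(\sigma(g,h))(v(gh)x)\cdot\sigma(g,h)^*$, and using the covariance relation for $(\pi,u)$, $u(g)u(h)y = \pi(\sigma(g,h))u(gh)y$. Then exploiting the balancing relation to move $\cdot\sigma(g,h)^*$ across the tensor as $\pi(\sigma(g,h)^*)$ on the right factor, the two $\sigma(g,h)$ contributions combine correctly to yield $\rho(\sigma(g,h))(v(gh)x) \dot\otimes u(gh)y = (\pi_*\circ\rho)(\sigma(g,h))(v(gh)x \dot\otimes u(gh)y)$, which is the right side.

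The main obstacle I anticipate is purely bookkeeping rather than conceptual: carefully tracking the cocycle factor $\sigma(g,h)$ through the balancing relation of the interior tensor product in the verification of property (ii), since $\sigma(g,h)$ appears via $\rho$ on the left factor and via $\pi$ on the right factor, and one must use the defining relation $x\cdot a \dot\otimes y = x \dot\otimes \pi(a)y$ in exactly the right direction to see the adjustment by ${\rm ad}_\rho(\sigma(g,h))$ collapse to multiplication by $\pi_*(\rho(\sigma(g,h)))$. I would also note that strictly speaking one should first check each $w(g)$ is well-defined on the dense algebraic tensor product before taking closures, and that adjointability of $w(g)$ follows automatically once one exhibits the inverse, since a surjective isometry between Hilbert $C^*$-modules is automatically adjointable. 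None of these steps requires anything beyond the definitions and the already-established formalism of interior tensor products from \cite{La1}.
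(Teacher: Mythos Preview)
Your proposal is correct and is precisely the routine verification the paper has in mind: the authors simply write ``Left to the reader as a routine exercise,'' so there is no alternative argument to compare against. One small remark: your phrase ``applying the same construction to $g^{-1}$'' does not literally give the inverse when $\sigma$ is nontrivial (you get $(\pi_*\circ\rho)(\sigma(g,g^{-1}))$ rather than the identity), but your parenthetical alternative---defining the candidate inverse directly via $x\dot\otimes y \mapsto v(g)^{-1}x \dot\otimes u(g)^* y$ and checking it is isometric using $\langle v(g)^{-1}x, v(g)^{-1}x'\rangle = \alpha_g^{-1}(\langle x,x'\rangle)$---works cleanly and is the right way to close that step.
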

\begin{proof} Left to the reader as a routine exercise.
\end{proof} 
We will denote the pair $(\pi_{*} \circ \rho\, ,\, w)$ by $(\rho\dot\otimes\pi\, , \, v\dot\otimes u)$ and call it {\it the product of $(\rho,v)$ with $(\pi,u)$}. 
If for example $(\rho, v)= (\ell,\alpha)$ is the trivial equivariant representation of $\Sigma$ on $A$, then, as is well known \cite{La1}, $A\otimes_\pi Y$ is unitarily equivalent to $Y$, and one may check that  $(\ell\dot\otimes\pi \, , \, \alpha \dot\otimes u)$ corresponds to $(\pi, u)$ under this identification. 

A more interesting interesting case occurs when we consider an induced regular equivariant representation. We will need the following:

\begin{lemma}\label{W}
There exists a unitary operator  $W \in \L(X^G \otimes_\pi Y)$ which satisfies
\begin{equation}\label{eqW}
W\big((x \odot \delta_g ) \dot\otimes \, y\big) 
= ( v(g)x \odot \delta_g) \, \dot\otimes \, u(g)y\, , 
 \end{equation}
for all $\ g \in G, x \in X, y \in Y \,.$ 

Here $x \odot \delta_g $ means the element in $C_c(G,X) \subseteq X^G$ which takes the value $x$ at $g$ and is zero otherwise. 
\end{lemma}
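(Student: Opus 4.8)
The plan is to construct $W$ explicitly on the algebraic level, check that it is isometric (hence extends to $X^G \otimes_\pi Y$), and then check surjectivity. First I would observe that $X^G$ has $C_c(G,X)$ as a dense submodule, spanned by the elementary tensors $x \odot \delta_g$, and correspondingly $X^G \otimes_\pi Y$ has a dense submodule spanned by vectors of the form $(x \odot \delta_g) \dot\otimes y$ with $g \in G$, $x \in X$, $y \in Y$. So I would define $W_0$ on this dense submodule by the prescribed formula~(\ref{eqW}), after verifying that this is well defined, i.e.\ that it respects the relations defining the interior tensor product: the only relation is $(\xi \cdot a) \dot\otimes y = \xi \dot\otimes \pi(a)y$ for $a \in A$, and using axiom (iv) of an equivariant representation, $v(g)\big((x\odot\delta_g)\cdot a\big) = v(g)\big((x\cdot a)\odot\delta_g\big)\big|_{\text{value at }g} $ one checks that $(x\cdot a \odot \delta_g)$ is sent to $(v(g)(x\cdot a) \odot \delta_g)\dot\otimes u(g)y = (v(g)x \cdot \alpha_g(a) \odot \delta_g)\dot\otimes u(g)y$, which by the tensor relation equals $(v(g)x\odot\delta_g)\dot\otimes \pi(\alpha_g(a))u(g)y = (v(g)x\odot\delta_g)\dot\otimes u(g)\pi(a)y$ by the covariance relation for $(\pi,u)$; this matches the image of $(x\odot\delta_g)\dot\otimes\pi(a)y$, so $W_0$ is well defined.

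Next I would check that $W_0$ preserves the $B$-valued inner product on elementary tensors. Recall that in $X^G\otimes_\pi Y$ one has $\langle \xi\dot\otimes y, \xi'\dot\otimes y'\rangle = \langle y, \pi(\langle\xi,\xi'\rangle)y'\rangle$. For $\xi = x\odot\delta_g$ and $\xi' = x'\odot\delta_{g'}$ the inner product $\langle\xi,\xi'\rangle$ vanishes unless $g=g'$, in which case it equals $\langle x,x'\rangle$. On the image side, $\langle v(g)x\odot\delta_g, v(g')x'\odot\delta_{g'}\rangle$ again vanishes unless $g=g'$, and for $g=g'$ equals $\langle v(g)x, v(g)x'\rangle = \alpha_g(\langle x,x'\rangle)$ by axiom (iii). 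So I must compare $\langle u(g)y, \pi(\langle x,x'\rangle)u(g)y'\rangle$ with $\langle u(g)y, \pi(\alpha_g(\langle x,x'\rangle))u(g)y'\rangle$; but $\pi(\alpha_g(a)) = u(g)\pi(a)u(g)^*$, so $\pi(\alpha_g(\langle x,x'\rangle))u(g)y' = u(g)\pi(\langle x,x'\rangle)y'$, and since $u(g)$ is unitary the two expressions agree. Hence $W_0$ is inner-product preserving on the dense submodule, so it extends uniquely to an isometry $W \in \L(X^G\otimes_\pi Y)$ (an isometric module map onto its range, with adjoint given by the analogous formula using $v(g)^{-1}$ and $u(g)^*$).

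Finally, surjectivity: since each $v(g)$ is invertible and each $u(g)$ is unitary, the formula shows that $(v(g)^{-1}x \odot \delta_g)\dot\otimes u(g)^*y \mapsto (x\odot\delta_g)\dot\otimes y$, so every elementary tensor is in the range of $W$; as the range is closed and contains a dense set, $W$ is surjective, hence unitary, and its inverse is adjointable, so $W\in\L(X^G\otimes_\pi Y)$ with $W^{-1} = W^*$. I expect the main obstacle to be purely bookkeeping: verifying well-definedness across the tensor-product relation and tracking where axioms (iii), (iv) and the covariance relation for $(\pi,u)$ enter, rather than anything conceptually deep. One should also be slightly careful that $v(g)$, although merely bounded invertible on $X$, is still inner-product-twisting in the precise sense of (iii), which is exactly what makes $W$ (as opposed to just a bounded invertible) isometric; this is the only spot where the non-unitarity of $v(g)$ could trip one up, and (iii) resolves it.
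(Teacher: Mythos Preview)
Your argument is correct and follows essentially the same route as the paper: verify that the prescribed formula preserves the $B$-valued inner product on the algebraic span of elementary tensors (using axiom (iii) for $(\rho,v)$ and the covariance relation for $(\pi,u)$), extend to an isometry, and then establish invertibility. The paper does the inner-product computation for general finite sums $z=\sum_i \xi_i\dot\otimes y_i$ with $\xi_i\in C_c(G,X)$ in one pass and then builds the inverse $W'$ explicitly via $v(g)^{-1}$ and $u(g)^*$, whereas you argue surjectivity by noting the range contains all elementary tensors; both are fine.

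One point worth tightening: your claim that ``the only relation'' defining the interior tensor product is the balanced relation $(\xi\cdot a)\dot\otimes y=\xi\dot\otimes\pi(a)y$ is not quite right---the null space of the semi-inner product on the algebraic tensor product can in principle be larger than the submodule generated by that relation. Fortunately your proof does not actually rely on this: once you verify inner-product preservation on elementary tensors (hence on their span by sesquilinearity), well-definedness follows automatically, since any element of the null space is sent to something of zero norm. So your separate ``well-definedness'' step checking the balanced relation is redundant rather than wrong; the inner-product computation does all the work, exactly as in the paper.
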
 
\begin{proof} We will first define $W$ on the dense subspace $Z$ of $X^G \otimes_\pi Y$ consisting of the span of elements of the form $\xi \dot\otimes y\, ,$ where $ \xi \in C_c(G,X), \, y \in Y$. 
For $z = \sum_{i=1}^{n} \xi_i \dot\otimes y_i \in Z$, we set
$$Wz = \sum_{i=1}^{n} \sum_{g \in G} \big(v(g)\xi_i(g) \odot \delta_g\big) \dot\otimes \, u(g)y_i$$
where the sum over $G$ is actually a finite sum over the union of the supports of the $\xi_i$'s.
Then, if $z' = \sum_{j=1}^{m} \xi'_j \dot\otimes y'_j \in Z$, we have
\begin{align*}
\langle Wz, \, Wz' \rangle & = \sum_{i,j, g, h} \Big\langle \, \big(v(g)\xi_i(g) \odot \delta_g\big)\, \dot\otimes \,u(g)y_i\, , \, \big(v(h)\xi'_j(h) \odot \delta_h\big) \, \dot\otimes \, u(h)y'_j \Big\rangle \\
& = \sum_{i,j, g, h} \Big\langle \,  u(g)y_i\, , \, \pi\Big( \big\langle v(g)\xi_i(g) \odot \delta_g, v(h)\xi'_j(h) \odot \delta_h\big\rangle\Big) \, u(h)y'_j \Big\rangle \\
& = \sum_{i,j, g} \Big\langle \,  u(g)y_i\, , \, \pi\Big( \big\langle v(g)\xi_i(g), v(g)\xi'_j(g) \big\rangle\Big) \, u(g)y'_j \Big\rangle\\
 & = \sum_{i,j, g} \Big\langle \,  u(g)y_i\, , \, \pi\Big( \alpha_g \big(\big\langle \xi_i(g), \xi'_j(g) \big\rangle \big)\Big) \, u(g)y'_j \Big\rangle\\
 & = \sum_{i,j, g} \Big\langle \,  u(g)y_i\, , \, u(g)\, \pi\Big(\big\langle \xi_i(g), \xi'_j(g) \big\rangle\Big) \,y'_j \Big\rangle\\
& = \sum_{i,j, g} \Big\langle \,  y_i\, , \, \pi\Big(\big\langle \xi_i(g), \xi'_j(g) \big\rangle\Big) \,y'_j \Big\rangle = 
\sum_{i,j} \big\langle \,  y_i\, , \, \pi\big(\big\langle \xi_i, \xi'_j \big\rangle\big) \,y'_j \big\rangle\\
& = \sum_{i,j} \big\langle \, \xi_i \dot\otimes y_i\, , \, \xi'_j \dot\otimes y'_j \big\rangle = \langle z\, , \, z' \rangle\,.
\end{align*}
It follows that $W$ is a well defined isometry from $Z$ into itself, which therefore extends to an isometry from $X^G \otimes_\pi Y$ into itself. Moreover, $W$ is easily seen to be $B$-linear, and it satisfies equation (\ref{eqW}) by definition. 

Now, a similar computation shows that 
there exists a $B$-linear isometry $W'$ from $X^G \otimes_\pi Y$ into itself which satisfies \begin{equation}
W' \big((x \odot \delta_g ) \dot\otimes y\big) 
= ( v(g)^{-1}x \odot \delta_g) \, \dot\otimes \, u(g)^*y\, , 
 \end{equation}
 for all $\ g \in G, x \in X, y \in Y \,.$ But then it readily follows that $W$ and $W'$ are inverses of each other. Hence, $W$ is unitary (cf. \cite{La1}).
\end{proof}
The following theorem reduces to Fell's classical absorption principle when $A=\mathbb{C}$ and $\sigma$ is trivial. 

\begin{theorem} \label{fell} Consider the induced equivariant representation $(\check{\rho}, \check{v})$ on $X^G $ and let  $\pi'_{*}:\L(X^G) \to \L(X^G \otimes_\pi Y)$ denote the  canonical homomorphism associated with $\pi$, so that 
$\check{\rho}\dot\otimes\pi= \pi'_{*}\circ \check{\rho} : A  \to \L(X^G \otimes_\pi Y)$.  

\smallskip Then the homomorphisms  
$(\check{\rho}\dot\otimes\pi) \times (\check{v} \dot\otimes u)$ and $\pi'_{*} \circ (\tilde{\rho} \times \tilde{\lambda}_\rho)$ 
are unitarily equivalent.

\smallskip Especially, it follows that  $(\check{\rho}\dot\otimes\pi) \times (\check{v} \dot\otimes u)$ and $\tilde{\rho} \times \tilde{\lambda}_\rho$ are weakly equivalent whenever $\pi$ is faithful. 

\smallskip Hence, $(\check{\rho}\dot\otimes\pi) \times (\check{v} \dot\otimes u)$ is regular whenever both $\pi$ and $\rho$ are  faithful.

\end{theorem}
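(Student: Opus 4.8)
The heart of the matter is the first assertion: that $(\check{\rho}\dot\otimes\pi) \times (\check{v} \dot\otimes u)$ and $\pi'_{*} \circ (\tilde{\rho} \times \tilde{\lambda}_\rho)$ are unitarily equivalent. I would implement this via the unitary $W \in \L(X^G \otimes_\pi Y)$ produced in Lemma \ref{W}, showing that conjugation by $W$ carries one homomorphism onto the other. Since both homomorphisms are determined by their values on the generators $a \odot \delta_e$ and $1 \odot \delta_g$ of $C_c(\Sigma)$, it suffices to verify the two intertwining identities
\begin{align*}
W^{*}\,(\check{\rho}\dot\otimes\pi)(a)\,W & = \pi'_{*}(\tilde{\rho}(a))\,, \\
W^{*}\,(\check{v}\dot\otimes u)(g)\,W & = \pi'_{*}(\tilde{\lambda}_\rho(g))\,,
\end{align*}
for all $a \in A$ and $g \in G$; equivalently, $W\,\pi'_{*}(\tilde{\rho}(a)) = (\check{\rho}\dot\otimes\pi)(a)\,W$ and $W\,\pi'_{*}(\tilde{\lambda}_\rho(g)) = (\check{v}\dot\otimes u)(g)\,W$. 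First I would check these on the dense set of elementary tensors $(x \odot \delta_k)\dot\otimes y$, using Lemma \ref{W}'s defining formula for $W$, the formula $\pi'_{*}(S)(\xi \dot\otimes y) = (S\xi)\dot\otimes y$ for the canonical amplification, and the explicit expressions for $\tilde{\rho}, \tilde{\lambda}_\rho$ (with $Y$ there replaced by $X$, $\pi$ by $\rho$) together with the defining formulas for $\check{\rho}, \check{v}$ and the operator $w = v\dot\otimes u$ from the preceding lemma.

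The computation for the $\check{\rho}$-part is short: both sides send $(x \odot \delta_k)\dot\otimes y$ to $(v(k)\rho(\alpha_k^{-1}(a))x \odot \delta_k)\dot\otimes u(k)y$, using the equivariance relation $\rho(\alpha_k^{-1}(a)) = v(k)^{-1}\rho(a)v(k)$ to match the two orderings. The $\check{v}$-part is where the bookkeeping is heaviest: on one side $W$ followed by $\tilde{\lambda}_\rho(g)$ (after transport), on the other side $\tilde{\lambda}_\rho(g)$ followed by $w(g)$; both should produce $(v(gk)\,\rho(\sigma(g,k))^{-1}\cdots)$-type expressions that collapse to the same thing once one invokes the cocycle identity $v(g)v(k) = {\rm ad}_\rho(\sigma(g,k))\,v(gk)$ and the covariance of $u$. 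I expect this single $\delta$-indexed calculation with the $2$-cocycle $\sigma$ to be the main obstacle — not conceptually deep, but the place where sign/order errors creep in; it mirrors the untwisted computation behind Proposition \ref{fell2} but with the twist carried through. Once the identities hold on elementary tensors, $B$-linearity and density extend them to all of $X^G \otimes_\pi Y$, and then to the integrated forms on all of $C_c(\Sigma)$ and hence on $C^*(\Sigma)$.

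The two ``Especially'' / ``Hence'' clauses are then immediate corollaries. If $\pi$ is faithful, then the canonical homomorphism $\pi'_{*}$ is faithful (as recalled from \cite{La1} in Section 2, exactly as for $\pi_*$), so $\pi'_{*}\circ(\tilde{\rho}\times\tilde{\lambda}_\rho)$ has the same kernel as $\tilde{\rho}\times\tilde{\lambda}_\rho$; combined with the unitary equivalence just proved, $(\check{\rho}\dot\otimes\pi)\times(\check{v}\dot\otimes u)$ is weakly equivalent to $\tilde{\rho}\times\tilde{\lambda}_\rho$. Finally, if in addition $\rho$ is faithful, then by the discussion in Section 2 (the case $\pi = \rho$ faithful, $Y = X$), $\tilde{\rho}\times\tilde{\lambda}_\rho = \rho_*\circ\Lambda$ is weakly equivalent to $\Lambda$, i.e.\ it is a regular homomorphism of $C^*(\Sigma)$ in the sense of Proposition \ref{CondE}; weak equivalence being transitive, $(\check{\rho}\dot\otimes\pi)\times(\check{v}\dot\otimes u)$ is regular as well.
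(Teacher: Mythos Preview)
Your proposal is correct and follows essentially the same route as the paper: both use the unitary $W$ of Lemma \ref{W} as the intertwiner, verify the intertwining relation on elementary tensors $(x\odot\delta_h)\dot\otimes y$ by unwinding the formulas and invoking the equivariance relation (i) and the cocycle relation (ii) for $(\rho,v)$, and then deduce the last two clauses from faithfulness of $\pi'_*$ and the weak equivalence of $\tilde\rho\times\tilde\lambda_\rho$ with $\Lambda$ established in Section~2. The only cosmetic difference is that the paper carries out the computation for $\tilde\rho(a)\tilde\lambda_\rho(g)$ in a single step rather than splitting it into the $a$-part and the $g$-part.
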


\begin{proof} Let $a \in A, \, g \in G$ be fixed.  We first note that for all $x\in X, \, h \in G$, we have
$$\tilde{\rho}(a)(x \odot \delta_h) =  
\big(\rho(\alpha_h^{-1}(a))\,x \big) \odot \delta_h\, , \quad \check{\rho}(a)(x \odot \delta_h) =  
\rho(a)\,x  \odot \delta_h$$
and
$$\tilde{\lambda}_\rho(g)(x\odot \delta_h) 
=  \big(\rho(\alpha_{gh}^{-1}(\sigma(g,h)))\, x\big)  \odot \delta_{gh}\, , \quad \check{v}(g)(x\odot \delta_h) 
=  v(g) x  \odot \delta_{gh} \,.$$
Letting $W$ be the unitary operator defined in Lemma \ref{W}, we therefore get
\begin{align*}
\big(W \, \pi'_{*}\big(\tilde\rho(a)&\tilde{\lambda}_\rho(g)\big) \, W^*\big) \, 
\big((x \odot \delta_h) \dot\otimes \,y\big) \\
& = \big(W \, \pi'_{*}\big(\tilde\rho(a)\tilde{\lambda}_\rho(g)\big)\big)  \big((  
v(h)^{-1}x \odot \delta_h) \dot\otimes \,u(h)^* y )\\
& = W \Big( \big(\tilde\rho(a)\tilde{\lambda}_\rho(g) (v(h)^{-1}x \odot \delta_h)\big)
\dot\otimes \, u(h)^{*} y \Big) \\
& = W \Big( \big( \rho\big(\alpha_{gh}^{-1}(a \,\sigma(g,h))\big)
v(h)^{-1} x \odot \delta_{gh}\big) \dot\otimes \,  u(h)^{*} y \Big) \\
& =  \Big(v(gh) \rho\big(\alpha_{gh}^{-1}(a \,\sigma(g,h))\big)
v(h)^{-1} x \odot \delta_{gh}\Big) \dot\otimes \, u(gh)u(h)^{*} y \\
& =   \Big(\rho(a \, \sigma(g,h)) v(gh)v(h)^{-1}x) \odot \delta_{gh}\Big)
\dot\otimes \, \pi(\sigma(g,h)^*)u(g)y \\
& = \Big(\big(\big(\rho(a)  \rho(\sigma(g,h))
v(gh)v(h)^{-1}x\big)\cdot \sigma(g,h)^*\big) \odot \delta_{gh}\Big)  \dot\otimes \, u(g)y \\
& = \Big(\big(\rho(a) \big( {\rm ad}_\rho(\sigma(g,h))
v(gh)v(h)^{-1}x\big) \big) \odot \delta_{gh}\Big)  \dot\otimes \, u(g)y \\
& = \Big(\big(\rho(a) \big( v(g)
v(h)v(h)^{-1}x\big) \big) \odot \delta_{gh}\Big)  \dot\otimes \, u(g)y \\
& = \Big(\rho(a) v(g)x \odot \delta_{gh}\Big)  \dot\otimes \, u(g)y \\
& = \Big(\check\rho(a) \check{v}(g)(x \odot \delta_{h})\Big)  \dot\otimes \, u(g)y \ 
\end{align*}
for each $x\in X, \, h \in G$ and $y \in Y$. By a density argument, we get
\begin{equation} \label{W-eq}
\big(W \, \pi'_{*}\big(\tilde\rho(a)\tilde{\lambda}_\rho(g)\big) \, W^*\big)
(\eta \dot\otimes\, y) = \check\rho(a)\check{v}(g)\eta \dot\otimes \, u(g)y  
\end{equation}
for all $\eta \in X^G$ and $y \in Y$, which in turn gives $$W \,\big( \pi'_{*}\circ(\tilde\rho \times \tilde{\lambda}_\rho)(f) \big)\, W^*
= \big((\check\rho\dot\otimes\pi) \times (\check{v}\dot\otimes u)\big)(f)$$
for all $f\in C_c(\Sigma)$. The first assertion clearly follows. If $\pi$ is faithful, then $\pi'_{*}$ is also faithful. On the other hand,  $\tilde\rho \times \tilde{\lambda}_\rho$ is weakly equivalent to $\Lambda$ when $\rho$ is faithful. Hence, both statements in the final assertion
are a consequence of the first assertion. 
\end{proof}

\begin{example}\label{fell2again}
Let us apply Theorem \ref{fell} with $(\rho, v)=(\ell,\alpha)$. This gives $$(\check{\ell}\dot\otimes\pi) \times (\check{\alpha} \dot\otimes u) \simeq  \pi'_{*} \circ (\tilde{\ell} \times \tilde{\lambda}_\ell)= \pi'_{*} \circ \Lambda \simeq \tilde{\pi} \times \tilde{\lambda}_\pi\,.$$ 
Now, as observed in Section 2, $A^G \otimes_\pi Y$ is unitarily equivalent to $Y^G$, hence to $Y \otimes \ell^2(G)$. Under this identification, one may check that $(\check{\ell}\dot\otimes\pi) \times (\check{\alpha} \dot\otimes u)$ corresponds to $(\pi \otimes \iota) \times (u \otimes \lambda)$. Thus we get that $(\pi \otimes \iota) \times (u \otimes\lambda) \simeq \tilde{\pi} \times \tilde{\lambda}_\pi$, thereby recovering Proposition \ref{fell2}.  
\end{example}

The Fell's absorption principle described in Theorem \ref{fell} may 
be used to construct certain completely bounded maps, and  
this procedure will be useful to us in the next section.

\begin{proposition}\label{Phi-prop} 
Let $\xi, \eta \in X^G$.  Then there exists  a (unique) completely bounded linear map
\begin{equation}
\Phi 
: \L(X^G) \to \L(Y)
\end{equation}
which satisfies $\|\Phi\| \leq \| \Phi \|_{\rm cb}  \leq \|\xi\| \, \|\eta\|$ and
\begin{equation}\label{Phi}
\Phi\big(\tilde{\rho}(a)\tilde{\lambda}_{\rho}(g)\big) 
= \pi\big(\langle \xi\, ,\, \check{\rho}(a)\check{v}(g)\eta \rangle\big)\, u(g)
\end{equation}
for each $\, a \in A, \, g \in G\,.$ 

\medskip If $\eta=\xi$, then $\Phi$ is completely positive and $\|\Phi\|= \|\xi\|^2$.
\end{proposition}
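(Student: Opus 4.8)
The plan is to construct $\Phi$ via the Fell absorption unitary $W$ from Lemma \ref{W} and the "rank-one"/compression trick that produces completely bounded maps from adjointable operators sandwiched between fixed module elements.

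\medskip\noindent\textbf{Setup.} First I would fix $\xi,\eta\in X^G$ and, thinking of them as adjointable maps $\xi,\eta:B\to X^G\otimes_\pi Y$... more precisely, I would proceed as follows. Given $\xi\in X^G$ and $y\in Y$, the element $\xi\dot\otimes y$ lies in $X^G\otimes_\pi Y$, and there is an adjointable operator $T_\xi\in\L(Y,X^G\otimes_\pi Y)$ determined by $T_\xi y=\xi\dot\otimes y$, with $T_\xi^*(\zeta\dot\otimes y')=\pi(\langle\xi,\zeta\rangle)y'$ and $\|T_\xi\|\le\|\xi\|$ (this is standard for interior tensor products, cf.\ \cite{La1}). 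Similarly define $T_\eta\in\L(Y,X^G\otimes_\pi Y)$. Then I would \emph{define}
\begin{equation}\label{Phidef}
\Phi(S)=T_\xi^*\,W\,\pi'_{*}(S)\,W^*\,T_\eta\,,\qquad S\in\L(X^G)\,,
\end{equation}
where $W\in\L(X^G\otimes_\pi Y)$ is the unitary of Lemma \ref{W} and $\pi'_{*}:\L(X^G)\to\L(X^G\otimes_\pi Y)$ the canonical (faithful if $\pi$ is, but always $*$-homomorphic) map associated with $\pi$. Since $\pi'_{*}$ is a $*$-homomorphism, $W$ is unitary, and $T_\xi,T_\eta$ are adjointable, $\Phi$ is a well-defined completely bounded linear map $\L(X^G)\to\L(Y)$ with $\|\Phi\|_{\rm cb}\le\|T_\xi^*\|\,\|\pi'_{*}\|_{\rm cb}\,\|T_\eta\|\le\|\xi\|\,\|\eta\|$; and $\|\Phi\|\le\|\Phi\|_{\rm cb}$ trivially.

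\medskip\noindent\textbf{Verifying the formula \eqref{Phi}.} The heart of the argument is to compute $\Phi(\tilde\rho(a)\tilde\lambda_\rho(g))$. Here I would invoke equation \eqref{W-eq} from the proof of Theorem \ref{fell}, which says exactly that
\[
W\,\pi'_{*}\big(\tilde\rho(a)\tilde\lambda_\rho(g)\big)\,W^*\,(\eta\dot\otimes y)=\big(\check\rho(a)\check v(g)\eta\big)\dot\otimes u(g)y
\]
for all $\eta\in X^G$, $y\in Y$. Applying $T_\xi^*$ and using $T_\xi^*(\zeta\dot\otimes y')=\pi(\langle\xi,\zeta\rangle)y'$, we get
\[
\Phi\big(\tilde\rho(a)\tilde\lambda_\rho(g)\big)y=T_\xi^*\big(\check\rho(a)\check v(g)\eta\dot\otimes u(g)y\big)=\pi\big(\langle\xi,\check\rho(a)\check v(g)\eta\rangle\big)u(g)y\,,
\]
which is \eqref{Phi}. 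Uniqueness of $\Phi$ with this property follows because the elements $\tilde\rho(a)\tilde\lambda_\rho(g)=(\tilde\rho\times\tilde\lambda_\rho)(a\odot\delta_g)$ have linear span dense in $C^*_r(\Sigma)$—but one must be slightly careful here: $\Phi$ is defined on all of $\L(X^G)$, not just on $\Lambda(C^*(\Sigma))$, so strictly speaking "uniqueness" should be read as uniqueness of a completely bounded map satisfying \eqref{Phi}, and I would note that formula \eqref{Phidef} is one explicit realization; any two cb maps agreeing on the $*$-algebra generated by $\{\tilde\rho(a)\tilde\lambda_\rho(g)\}$ need not agree on $\L(X^G)$, so I would phrase the statement as "there exists a cb map... given by \eqref{Phidef}" and not over-claim uniqueness on the whole of $\L(X^G)$. (If the paper genuinely wants uniqueness, it should be uniqueness of the restriction to $C^*_r(\Sigma)$, which does follow by density.)

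\medskip\noindent\textbf{The case $\eta=\xi$.} When $\eta=\xi$, \eqref{Phidef} reads $\Phi(S)=(W^*T_\xi)^*\,\pi'_{*}(S)\,(W^*T_\xi)$, i.e.\ $\Phi(S)=R^*\pi'_{*}(S)R$ with $R=W^*T_\xi\in\L(Y,X^G\otimes_\pi Y)$. Since $\pi'_{*}$ is a $*$-homomorphism, $S\mapsto R^*\pi'_{*}(S)R$ is completely positive. For the norm, on one hand $\|\Phi\|\le\|\xi\|^2$ by the bound above; on the other hand, $\Phi$ is completely positive (hence its norm is attained at the identity $I_{X^G}$), so $\|\Phi\|=\|\Phi(I_{X^G})\|=\|T_\xi^*WW^*T_\xi\|=\|T_\xi^*T_\xi\|=\|T_\xi\|^2$; and $\|T_\xi\|^2=\|T_\xi^*T_\xi\|=\|\pi(\langle\xi,\xi\rangle)\|\le\|\langle\xi,\xi\rangle\|=\|\xi\|^2$, with equality because one can check $\|T_\xi y\|^2=\|\langle\xi\dot\otimes y,\xi\dot\otimes y\rangle\|$ can be made arbitrarily close to $\|\xi\|^2\|y\|^2$... actually the clean way: $\|\Phi(I)\|=\|\langle\xi,\xi\rangle\|_{\text{(image under }\pi\text{)}}$, and since $\pi$ is a unital $*$-homomorphism on the unital $C^*$-algebra $A$ it need not be isometric in general—so to get $\|\Phi\|=\|\xi\|^2$ rather than merely $\le$, one does need $\pi$ faithful, OR one computes $\|\Phi\|$ differently using that $T_\xi^*T_\xi=\pi(\langle\xi,\xi\rangle)$ as an operator on $Y$ has norm equal to $\|\langle\xi,\xi\rangle\|$ because... hmm. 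I expect this last point to be the main subtlety: establishing $\|\Phi\|=\|\xi\|^2$ exactly. The likely intended resolution is that $\langle\xi,\xi\rangle\in A$ and for the relevant module $Y$ the representation $\pi$ is isometric on the hereditary subalgebra generated by $\langle\xi,\xi\rangle$, or simply that in the applications $\pi$ is faithful; I would state the equality under whatever hypothesis makes it correct and otherwise record only $\|\Phi\|\le\|\xi\|^2$ together with complete positivity.

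\medskip\noindent\textbf{Main obstacle.} The genuine content is \eqref{W-eq}, already proved inside Theorem \ref{fell}; given that, everything else is the standard "$R^*(\,\cdot\,)R$ is cb/cp" bookkeeping for Hilbert $C^*$-modules. The one place needing care is the exact value $\|\Phi\|=\|\xi\|^2$ in the self-adjoint case, where one must be honest about whether $\pi$ is assumed faithful.
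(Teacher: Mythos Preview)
Your approach is essentially identical to the paper's: define $\Phi(S)=\theta_\xi^*\,W\,\pi'_*(S)\,W^*\,\theta_\eta$ (the paper writes $\theta_\xi$ for your $T_\xi$), invoke equation~\eqref{W-eq} from Theorem~\ref{fell} to verify~\eqref{Phi}, and read off the cb and cp properties from the $R^*(\cdot)R$ form. Your caution about the exact equality $\|\Phi\|=\|\xi\|^2$ is well-placed: the paper simply asserts $\|\Phi(I)\|=\|\xi\|^2$, but since $\Phi(I)=\theta_\xi^*\theta_\xi=\pi(\langle\xi,\xi\rangle)$, this indeed presupposes that $\pi$ is isometric on $\langle\xi,\xi\rangle$ (e.g.\ $\pi$ faithful), a hypothesis not stated in the proposition; so your reservation is a genuine observation rather than a gap in your argument.
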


\begin{proof} We use the same notation as in Theorem \ref{fell} and its proof. 
For $\xi \in X^G$,   
we also let $\theta_\xi \in \L(Y,X^G \otimes_\pi Y)$ be defined as in
\cite{La1}, that is,
$\theta_\xi(y) = \xi \dot\otimes \, y\, ,  y \in Y\,.$
Consider the linear map $\Phi : \L(X^G)
\to \L(Y)$  given by
\begin{equation}
\Phi(\cdot)=\theta_\xi^* \, W\, \pi'_{*}(\cdot)\, W^* \,\theta_\eta 
\end{equation}
It is well known that such a map is completely bounded (see e.g.\ \cite{Pau} or \cite{Pis}), with 
$$\|\Phi\|_{cb}\leq \|\theta_{\xi}^* \, W\|\,  \| W^*\theta_{\eta}\|=  \|\xi\| \, \|\eta\|\, .$$
If $\eta=\xi$, then $\Phi(\cdot)=(\theta_\xi^* \, W)\, \pi'_{*}(\cdot)\, (\theta_\xi^* W)^*$ becomes completely positive and satisfy
$$\|\Phi\|= \|\Phi\|_{cb}= \|\Phi(I)\| = \|\xi\|^2\,.$$

Hence, it remains only to show that (\ref{Phi}) holds. So let $a \in A, \, g \in G$. 
Using Theorem \ref{fell} (see equation (\ref{W-eq})), we get
\begin{align*}
\Phi(\tilde\rho(a)\tilde{\lambda}_\rho(g))y &= \big(\theta_\xi^* \, W \pi'_{*}\big(\tilde\rho(a)\tilde{\lambda}_\rho(g)\big)W^*\theta_\eta \big)y \\
&=\big(\theta_\xi^* \,W \pi'_{*}\big(\tilde\rho(a)\tilde{\lambda}_\rho(g)\big)W^*\big) (\eta \dot\otimes y)\\
& = \theta_\xi^*\, \big(\check\rho(a)\check{v}(g)\eta \, \dot\otimes \, u(g)y \big) \\
& = \pi\big(\langle \xi,\check\rho(a)\check{v}(g)\eta\rangle\big)u(g)y \ 
\end{align*}
for each $y \in Y$, as desired.
 \end{proof}
 
 We will see in \cite{BeCo3} that there exist other versions of Fell's absorption principle, and that these may be used to construct (completely bounded) "multipliers" from $C_r^*(\Sigma)$ into itself.

\section{Regularity}

Following the terminology introduced in
\cite{QS}, we will say that $\Sigma=(A,G, \alpha, \sigma)$ is {\it regular} if the canonical homomorphism 
$\Lambda : C^*(\Sigma) \to C^*_r(\Sigma)$ is injective, i.e., $\Lambda$ is an 
isomorphism. Taking into account Proposition \ref{CondE}, one can easily check that $\Sigma$ is regular if and only if some (resp.\ every) regular homomorphism of $C^*(\Sigma)$ is an isomorphism, if and only if some faithful homomorphism (resp.\ every homomorphism) of $C^*(\Sigma)$ is weakly contained in some regular homomorphism of $C^*(\Sigma)$. 

\medskip It is well known that $\Sigma$ is regular whenever $G$ is amenable (see \cite{ZM, PaRa1}).
Some more general conditions ensuring that $\Sigma$ is regular are given in \cite{AD1, QS, AD2, Ex, ExNg, BrOz}. 
Inspired by these results, we will introduce a weakening of Exel's approximation property that is enough to ensure regularity.

\medskip We first record a trivial, but useful observation:

\begin{lemma} \label{weak} Let $\phi_1$ and $\phi_2$ be homomorphisms of $C^*(\Sigma)$. Then $\phi_1$ is weakly contained in $\phi_2$ if and only if there exists a net $\{\psi^i\}$ of  maps from $\phi_2(C^*(\Sigma))$ into $\phi_1(C^*(\Sigma))$ 
which satisfies
\begin{equation} \label{reg}
\lim_i \, \|\psi^i(\phi_2(x)) - \phi_1(x) \| = 0\, , \quad x \in C^*(\Sigma)\,.
\end{equation}

\end{lemma}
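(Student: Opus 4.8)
The plan is to prove both implications directly from the definition of weak containment, namely that $\phi_1$ is weakly contained in $\phi_2$ exactly when $\ker(\phi_2) \subseteq \ker(\phi_1)$, equivalently when there is a homomorphism $\psi$ from $\phi_2(C^*(\Sigma))$ onto $\phi_1(C^*(\Sigma))$ with $\psi \circ \phi_2 = \phi_1$.

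For the ``only if'' direction, suppose $\phi_1$ is weakly contained in $\phi_2$. Then by definition there is a single homomorphism $\psi$ with $\psi \circ \phi_2 = \phi_1$, so taking the constant net $\psi^i = \psi$ for all $i$ makes the left-hand side of (\ref{reg}) identically zero. Thus the displayed condition holds trivially. I would spell out this one line.

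For the ``if'' direction, suppose such a net $\{\psi^i\}$ exists. Take $x \in \ker(\phi_2)$, so $\phi_2(x) = 0$. Then $\psi^i(\phi_2(x)) = \psi^i(0) = 0$ for every $i$ (here I should note that each $\psi^i$ need only be assumed to be a \emph{map}, but the displayed equation forces $0 = \lim_i \|\psi^i(\phi_2(x)) - \phi_1(x)\| = \|\phi_1(x)\|$; in fact even without linearity the estimate $\|\phi_1(x)\| \le \|\psi^i(\phi_2(x)) - \phi_1(x)\|$ is not automatic unless $\psi^i(0)=0$, so I would either assume the $\psi^i$ are linear maps — which matches the usual application — or simply observe $\psi^i(\phi_2(x)) = \psi^i(0)$ and that the limit of these equals $\phi_1(x)$, hence $\phi_1(x) = \lim_i \psi^i(0)$, and if the $\psi^i$ are linear this is $0$). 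Concluding $\phi_1(x) = 0$, we get $\ker(\phi_2) \subseteq \ker(\phi_1)$, which is weak containment.

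The only genuine subtlety — hardly an obstacle — is being careful about what regularity of the maps $\psi^i$ is assumed: the statement says ``maps,'' but the natural reading (and the one used in the applications to the approximation property) is that they are linear, so that $\psi^i(0) = 0$ and the argument of the previous paragraph goes through cleanly. I would insert a half-sentence clarifying this, then the proof is complete.

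\begin{proof}
Suppose first that $\phi_1$ is weakly contained in $\phi_2$. Then there exists a homomorphism $\psi$ from $\phi_2(C^*(\Sigma))$ onto $\phi_1(C^*(\Sigma))$ satisfying $\psi\circ\phi_2 = \phi_1$. Taking the constant net $\psi^i = \psi$, condition (\ref{reg}) holds trivially, since $\psi^i(\phi_2(x)) - \phi_1(x) = 0$ for every $x \in C^*(\Sigma)$ and every $i$.

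Conversely, suppose there is a net $\{\psi^i\}$ of (linear) maps from $\phi_2(C^*(\Sigma))$ into $\phi_1(C^*(\Sigma))$ satisfying (\ref{reg}). Let $x \in {\rm ker}(\phi_2)$, so that $\phi_2(x) = 0$ and hence $\psi^i(\phi_2(x)) = 0$ for all $i$. Then
$$\|\phi_1(x)\| = \lim_i \|\psi^i(\phi_2(x)) - \phi_1(x)\| = 0\,,$$
so $\phi_1(x) = 0$. This shows ${\rm ker}(\phi_2) \subseteq {\rm ker}(\phi_1)$, i.e., $\phi_1$ is weakly contained in $\phi_2$.
\end{proof}
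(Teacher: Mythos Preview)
Your proof is correct and matches the paper's treatment: the paper calls this lemma a ``trivial, but useful observation'' and gives no proof at all, so your direct argument from the definition of weak containment is exactly what is intended.

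One small remark on the subtlety you flagged: you do not actually need to assume the $\psi^i$ are linear. Since $\phi_1$ is a homomorphism, $\phi_1(0)=0$, and applying condition (\ref{reg}) at $x=0$ gives $\lim_i \psi^i(0) = \phi_1(0) = 0$. Then for any $x \in \ker(\phi_2)$ you have $\psi^i(\phi_2(x)) = \psi^i(0)$, so $\phi_1(x) = \lim_i \psi^i(\phi_2(x)) = \lim_i \psi^i(0) = 0$. Thus the lemma holds for arbitrary maps, as stated.
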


\begin{proposition} \label{netcond} 
Let $\phi$ be any faithful homomorphism  of $C^*(\Sigma)$.
The following conditions are equivalent:
\begin{itemize}
\item[(i)] $\Sigma$ is regular.
\item[(ii)] There exists  
a net $\{\psi^i\}$ of  maps from $C_r^*(\Sigma)$ 
into $\phi(C^*(\Sigma))$ 
such that
\begin{equation} \label{netreg}
\lim_i \, \|\psi^i(\Lambda(x)) - \phi(x) \| = 0\, , \quad x \in C^*(\Sigma)\,.
\end{equation}
\item[(iii)] There exists  
a net $\{\psi^i\}$ of  bounded linear maps from 
$ C^*_r(\Sigma)$   
into $\phi(C^*(\Sigma))$ 
which satisfy $\, \sup_i \|\psi^i\| < \infty$ and 
\begin{equation} \label{regu}
\lim_i \, \|\psi^i(\Lambda(f)) - \phi(f) \| = 0\, , \quad f \in C_c(\Sigma)\,.
\end{equation}
\end{itemize}
\end{proposition}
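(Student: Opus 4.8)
The plan is to prove the chain of implications $(i) \Rightarrow (ii) \Rightarrow (iii) \Rightarrow (i)$, using Lemma \ref{weak} and Proposition \ref{CondE} as the main tools, together with the observation that $\Lambda$ is itself a faithful homomorphism of $C^*(\Sigma)$ precisely when $\Sigma$ is regular.

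\medskip \textbf{$(i)\Rightarrow(ii)$.} Assume $\Sigma$ is regular, so $\Lambda : C^*(\Sigma) \to C^*_r(\Sigma)$ is an isomorphism. Since $\phi$ is faithful by hypothesis, $\phi$ and $\Lambda$ are weakly equivalent (they have the same kernel, namely $\{0\}$). By Lemma \ref{weak} applied with $\phi_1 = \phi$ and $\phi_2 = \Lambda$, there is a net $\{\psi^i\}$ of maps from $\Lambda(C^*(\Sigma)) = C^*_r(\Sigma)$ into $\phi(C^*(\Sigma))$ satisfying $\lim_i \|\psi^i(\Lambda(x)) - \phi(x)\| = 0$ for all $x \in C^*(\Sigma)$, which is exactly (\ref{netreg}). (In fact one may take the constant net $\psi^i = \phi \circ \Lambda^{-1}$, but the net formulation is all that is needed.)

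\medskip \textbf{$(ii)\Rightarrow(iii)$.} This is the only step requiring a genuine idea: the maps $\psi^i$ produced in $(ii)$ need not be linear or bounded, whereas $(iii)$ demands a uniformly bounded net of bounded linear maps. The remedy is to replace each $\psi^i$ by the linear map $\Phi^i := \phi \circ \Lambda^{-1}$ — but this presupposes $(i)$, so instead I would argue as follows: from $(ii)$ and Lemma \ref{weak}, $\phi$ is weakly contained in $\Lambda$, hence $\ker \Lambda \subseteq \ker \phi = \{0\}$, so $\Lambda$ is injective, i.e.\ $\Sigma$ is already regular. Then $\Lambda^{-1} : C^*_r(\Sigma) \to C^*(\Sigma)$ exists and is an isometric isomorphism, so the single bounded linear map $\psi := \phi \circ \Lambda^{-1}$ has $\|\psi\| = 1$ and satisfies $\psi(\Lambda(f)) = \phi(f)$ exactly, for all $f \in C_c(\Sigma)$ (indeed for all of $C^*(\Sigma)$); taking the constant net $\psi^i = \psi$ gives $(iii)$ with $\sup_i\|\psi^i\| = 1 < \infty$.

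\medskip \textbf{$(iii)\Rightarrow(i)$.} Suppose $(iii)$ holds, with $M := \sup_i \|\psi^i\| < \infty$. Fix $x \in C^*(\Sigma)$ and $\varepsilon > 0$. Choose $f \in C_c(\Sigma)$ with $\|x - f\|_* < \varepsilon$; since $\Lambda$ is norm-decreasing and $\phi$ is a homomorphism (hence norm-decreasing as well), and since $\|\psi^i\| \le M$, we estimate
\begin{align*}
\|\psi^i(\Lambda(x)) - \phi(x)\|
&\le \|\psi^i(\Lambda(x)) - \psi^i(\Lambda(f))\| + \|\psi^i(\Lambda(f)) - \phi(f)\| + \|\phi(f) - \phi(x)\| \\
&\le M\,\|\Lambda(x-f)\| + \|\psi^i(\Lambda(f)) - \phi(f)\| + \|\phi(x-f)\| \\
&\le (M+1)\,\varepsilon + \|\psi^i(\Lambda(f)) - \phi(f)\|\,.
\end{align*}
By (\ref{regu}) the last term tends to $0$, so $\limsup_i \|\psi^i(\Lambda(x)) - \phi(x)\| \le (M+1)\varepsilon$; as $\varepsilon$ was arbitrary, $\lim_i \|\psi^i(\Lambda(x)) - \phi(x)\| = 0$. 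Thus the hypothesis of Lemma \ref{weak} is met with $\phi_1 = \phi$, $\phi_2 = \Lambda$, so $\phi$ is weakly contained in $\Lambda$, i.e.\ $\ker\Lambda \subseteq \ker\phi = \{0\}$, so $\Lambda$ is injective and $\Sigma$ is regular. The main (and only real) obstacle is the density/approximation bookkeeping in this last step, ensuring the uniform bound on $\|\psi^i\|$ lets one pass from agreement on the dense subalgebra $C_c(\Sigma)$ to agreement on all of $C^*(\Sigma)$; everything else is a direct application of Lemma \ref{weak} and the fact that a faithful homomorphism weakly contained in $\Lambda$ forces $\Lambda$ to be faithful.
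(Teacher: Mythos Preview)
Your proof is correct and uses the same three ingredients as the paper: the single map $\psi=\phi\circ\Lambda^{-1}$ when $\Sigma$ is regular, the $\varepsilon/3$-argument exploiting $\sup_i\|\psi^i\|<\infty$ to pass from $C_c(\Sigma)$ to $C^*(\Sigma)$, and Lemma~\ref{weak} to conclude injectivity of $\Lambda$. The only difference is the order of the cycle: the paper proves $(i)\Rightarrow(iii)\Rightarrow(ii)\Rightarrow(i)$, which makes the step $(iii)\Rightarrow(ii)$ a direct density argument, whereas your route $(i)\Rightarrow(ii)\Rightarrow(iii)\Rightarrow(i)$ forces the detour through $(i)$ in the middle step (you deduce regularity from $(ii)$ and then construct the bounded map). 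Both are fine; the paper's ordering is slightly more economical since each implication is proved once and used once, while you effectively prove $(ii)\Rightarrow(i)$ twice. Also, Proposition~\ref{CondE} is not needed here and the paper does not invoke it.
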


\begin{proof}
 If $\Sigma$ is regular, 
 then, by considering $\psi = \phi \circ \Lambda^{-1}$, we see that $(iii)$ holds. 
 Next, assume that $(iii)$ holds. 
 Using  that  $\, \sup_i \|\psi^i\| < \infty$, a standard $\varepsilon/3$-argument gives that $(\ref{netreg})$ follows from $(\ref{regu})$. Hence $(ii)$ holds.  
  Finally, if $(ii)$ holds, then Lemma \ref{weak} gives that $\phi$ is weakly contained in $\Lambda$, so $(i)$ holds. 
\end{proof}
\begin{remark} In  conditions $(ii)$ and $(iii)$ in Proposition \ref{netcond}, $\Lambda$ may be replaced  by any regular homomorphism of $C^*(\Sigma)$. 
\end{remark}

The following definition will be useful:
\begin{definition} Let  $T:G\times A \to A$ be a map 
which is linear in the second variable
and let $T_c: C_c(\Sigma) \to C_c(\Sigma)$ be the induced linear map defined by
\begin{equation}
[T_c(f)](g) = T(g, f(g)) \,, \quad f \in C_c(\Sigma)\, , \,g \in G \,.
\end{equation}
We will say that $T$ is a {\it rf-multiplier of $\Sigma$} whenever 
there exists a (necessarily unique) bounded linear map $\varphi_T: C_r^*(\Sigma) \to C^*(\Sigma)$ satisfying 
$$ \varphi_T(\Lambda(f))=T_c(f) \,, \quad f \in C_c(\Sigma)\,.$$
\end{definition} 
The existence of nonzero rf-multipliers of $\Sigma$ is not obvious, except when $G$ is amenable. Using our work in the previous section, we can show:

 \begin{proposition} \label{T-mult} Let $(\rho,v)$ be an equivariant representation of $\Sigma$ on a Hilbert $A$-module $X$ and let $\xi, \eta \in X^G$. 
Define  $T:G\times A \to A$  by 
$$T(g,a) = \langle \xi,\check\rho(a)\check{v}(g)\eta\rangle\, , \quad g \in G, \,  a \in A.$$
Then $T$ is a rf-multiplier of $\Sigma$ and $\|\varphi_T\| \leq \|\xi\| \, \|\eta\|$.
 \end{proposition}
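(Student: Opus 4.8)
The plan is to reduce Proposition \ref{T-mult} to the completely bounded map $\Phi: \L(X^G) \to \L(Y)$ produced in Proposition \ref{Phi-prop}, by choosing the covariant representation $(\pi, u)$ appropriately. First I would pick a \emph{faithful} covariant representation $(\pi, u)$ of $\Sigma$ on some Hilbert $B$-module $Y$; the integrated form $\pi \times u$ is then a faithful homomorphism of $C^*(\Sigma)$, so we may identify $C^*(\Sigma)$ with $(\pi \times u)(C^*(\Sigma)) \subseteq \L(Y)$. For instance, one can take $(\pi, u)$ to be the universal covariant representation obtained from a faithful representation of $C^*(\Sigma)$ on a Hilbert space, or simply regard $i_A \times i_G$ as landing in $\L(Y)$ for a suitable $Y$. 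With this choice, Proposition \ref{Phi-prop} applied to the given $\xi, \eta \in X^G$ yields a completely bounded map $\Phi: \L(X^G) \to \L(Y)$ with $\|\Phi\| \le \|\Phi\|_{\rm cb} \le \|\xi\|\,\|\eta\|$ and
$$\Phi\big(\tilde{\rho}(a)\tilde{\lambda}_\rho(g)\big) = \pi\big(\langle \xi, \check{\rho}(a)\check{v}(g)\eta\rangle\big)\, u(g) = \pi(T(g,a))\, u(g)$$
for all $a \in A$, $g \in G$.

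Next I would restrict $\Phi$ to the reduced crossed product. Recall from Section 2 that $\tilde{\rho} \times \tilde{\lambda}_\rho$ is weakly contained in $\Lambda$ (and weakly equivalent to it when $\rho$ is faithful, though we need not assume that here), so there is a homomorphism from $C^*_r(\Sigma) = \Lambda(C^*(\Sigma))$ onto $(\tilde{\rho} \times \tilde{\lambda}_\rho)(C^*(\Sigma))$; actually the cleanest route is to observe that $\tilde\rho \times \tilde\lambda_\rho$ factors through $\Lambda$, i.e.\ there is a homomorphism $q: C^*_r(\Sigma) \to \L(X^G)$ with $q \circ \Lambda = \tilde\rho \times \tilde\lambda_\rho$ on $C^*(\Sigma)$. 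Composing, I set $\varphi_T := \Phi \circ q : C^*_r(\Sigma) \to \L(Y) \cong C^*(\Sigma)$. This is a bounded linear map with $\|\varphi_T\| \le \|\Phi\| \le \|\xi\|\,\|\eta\|$, since $q$ is a $*$-homomorphism and hence contractive. It remains to check the defining identity on $C_c(\Sigma)$: for $f \in C_c(\Sigma)$,
$$\varphi_T(\Lambda(f)) = \Phi\big((\tilde\rho \times \tilde\lambda_\rho)(f)\big) = \sum_{g \in G} \Phi\big(\tilde\rho(f(g))\tilde\lambda_\rho(g)\big) = \sum_{g\in G} \pi(T(g, f(g)))\, u(g) = (\pi \times u)(T_c f),$$
which, under the identification of $(\pi\times u)(C^*(\Sigma))$ with $C^*(\Sigma)$, is exactly $T_c(f)$. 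Uniqueness of $\varphi_T$ is automatic from density of $\Lambda(C_c(\Sigma))$ in $C^*_r(\Sigma)$ and boundedness.

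The main obstacle, such as it is, is bookkeeping rather than genuine difficulty: one must make sure that the target of $\varphi_T$ really is $C^*(\Sigma)$ and not merely $\L(Y)$, which requires verifying that $\Phi$ maps $q(C^*_r(\Sigma))$ into $(\pi\times u)(C^*(\Sigma))$. This follows because on the dense subalgebra $q(\Lambda(C_c(\Sigma)))$ the computation above shows $\Phi$ lands in $(\pi\times u)(C_c(\Sigma))$, and then continuity of $\Phi$ together with completeness of $(\pi \times u)(C^*(\Sigma)) = C^*(\Sigma)$ (it is closed, being the image of a $C^*$-algebra under an injective $*$-homomorphism) gives the claim. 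One small point worth stating explicitly is why $(\pi \times u)$ being faithful is needed at all: it is what lets us interpret $\varphi_T$ as valued in $C^*(\Sigma)$ itself, matching the definition of rf-multiplier; the norm estimate and the formula do not depend on faithfulness. With these observations in place the proof is complete.
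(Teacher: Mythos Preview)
Your proof is correct and follows essentially the same route as the paper's own argument: choose a faithful covariant representation $(\pi,u)$, invoke Proposition~\ref{Phi-prop} to obtain the completely bounded map $\Phi$, factor $\tilde\rho\times\tilde\lambda_\rho$ through $\Lambda$ via a homomorphism $q$ (the paper calls it $\theta$), and set $\varphi_T=(\pi\times u)^{-1}\circ\Phi\circ q$. The only cosmetic difference is that the paper writes the composition with $(\pi\times u)^{-1}$ explicitly while you absorb it into an identification of $C^*(\Sigma)$ with its image; your added paragraph verifying that $\Phi\circ q$ actually lands in $(\pi\times u)(C^*(\Sigma))$ makes explicit a point the paper leaves implicit.
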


\begin{proof} We first choose a faithful representation  $ \pi \times u$ of $C^*(\Sigma)$ on some Hilbert $C^*$-module $X$.  
According to Proposition \ref{Phi-prop}, there exists a (completely) bounded map $\Phi_T: \L(X^G)\to \L(Y)$  satisfying
\begin{equation} \label{Phi'}
\Phi_T( \big(\tilde\rho \times\tilde{\lambda}_\rho)(f)\big) = (\pi \times u) (T_c(f))\, , \quad f \in C_c(\Sigma)\,.
\end{equation}
Letting $\theta:C_r^*(\Sigma)\to (\tilde\rho \times\tilde{\lambda}_\rho)(C^*(\Sigma))$ denote the homomorphism satisfying $\theta\circ \Lambda = \tilde\rho \times\tilde{\lambda}_\rho$, it follows that the map $$\varphi_T= (\pi \times u)^{-1} \circ \Phi_T \circ \theta: C^*_r(\Sigma)\to C^*(\Sigma)\,$$ 
is  (completely) bounded, with $\|\varphi_T\| \leq \|\varphi_T\|_{\rm cb} =\| \Phi_T \|_{\rm cb}  \leq \|\xi\| \, \|\eta\|$, and satisfies
$$\varphi_T(\Lambda(f)) = T_c(f)\, , \quad f \in C_c(\Sigma).$$
The assertion is thereby proven.

\end{proof}

\begin{proposition}  \label{reg2} Let  $\{ T^i\}$ be a net of rf-multipliers of $\Sigma$ and set $\varphi^i= \varphi_{T^i}$ for each $i$.
Assume that   the following two conditions are satisfied:
\begin{itemize}
\item[(i)] $ \sup_i \| \varphi^i\|�  <  \infty $.
\item[(ii)] $\lim_i \| T^i(g,a) - a \|�= 0 \, , \quad g \in G\, , \,a \in A$.  
\end{itemize}
Then $\Sigma$ is regular. 
\end{proposition}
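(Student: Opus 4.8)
The plan is to verify condition (iii) of Proposition \ref{netcond}, using the net $\{\varphi^i\}$ of bounded linear maps $\varphi^i : C_r^*(\Sigma) \to C^*(\Sigma)$ directly as the desired net $\{\psi^i\}$ (with $\phi = \mathrm{id}_\Sigma$ being the identity faithful homomorphism of $C^*(\Sigma)$, so that $\Lambda$ plays the role of the regular homomorphism). Hypothesis (i) immediately gives $\sup_i \|\psi^i\| = \sup_i \|\varphi^i\| < \infty$, so the only thing to check is the approximation property \eqref{regu}, i.e.\ that $\lim_i \|\varphi^i(\Lambda(f)) - f\| = 0$ for every $f \in C_c(\Sigma)$, where we identify $C_c(\Sigma)$ with its canonical copy inside $C^*(\Sigma)$.

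First I would fix $f \in C_c(\Sigma)$ with finite support $K \subseteq G$. By definition of an rf-multiplier, $\varphi^i(\Lambda(f)) = T^i_c(f)$, which is the element of $C_c(\Sigma)$ given by $[T^i_c(f)](g) = T^i(g, f(g))$; in particular it is again supported on $K$. Hence $\varphi^i(\Lambda(f)) - f$ is the element of $C_c(\Sigma)$ supported on $K$ whose value at $g \in K$ is $T^i(g, f(g)) - f(g)$. Now I would estimate its full crossed product norm: writing $h := \varphi^i(\Lambda(f)) - f = \sum_{g \in K} \big(T^i(g,f(g)) - f(g)\big) \odot \delta_g$, the triangle inequality in $C^*(\Sigma)$ together with $\|a \odot \delta_g\|_* = \|i_A(a) i_G(g)\|_* \le \|a\|$ (since $i_G(g)$ is unitary and $i_A$ is a homomorphism, hence contractive) gives
\begin{equation*}
\|\varphi^i(\Lambda(f)) - f\|_* \;\le\; \sum_{g \in K} \big\| T^i(g, f(g)) - f(g) \big\|\,.
\end{equation*}
By hypothesis (ii), applied to each of the finitely many points $g \in K$ (with $a = f(g)$), each summand tends to $0$ along the net; since $K$ is finite, the whole finite sum tends to $0$. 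This establishes \eqref{regu}.

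Having verified condition (iii) of Proposition \ref{netcond}, we conclude that $\Sigma$ is regular, completing the proof. The argument is essentially routine once the right reduction is identified; the only mildly delicate point is the passage from convergence of the "coefficient-wise" errors $T^i(g,f(g)) - f(g)$ in $A$ to convergence of $\varphi^i(\Lambda(f)) - f$ in the full $C^*$-norm, which is handled by the elementary estimate $\|a \odot \delta_g\|_* \le \|a\|$ and finiteness of the support — no use of hypothesis (i) is needed for this step, (i) being used only to match the uniform-boundedness requirement in Proposition \ref{netcond}(iii). I would also remark (via the Remark following Proposition \ref{netcond}) that one could equally phrase the conclusion through any regular homomorphism in place of $\Lambda$, but taking $\Lambda$ itself and $\phi = \mathrm{id}_\Sigma$ is the cleanest route.
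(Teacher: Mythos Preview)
Your proof is correct and follows essentially the same approach as the paper: you verify condition (iii) of Proposition \ref{netcond} with $\phi = {\rm id}_\Sigma$ and $\psi^i = \varphi^i$, using the finite-support estimate $\|\varphi^i(\Lambda(f)) - f\|_* \le \sum_{g\in K}\|T^i(g,f(g)) - f(g)\|$ together with hypothesis (ii), exactly as the paper does. Your justification of $\|a\odot\delta_g\|_* \le \|a\|$ via $i_A$ and $i_G$ is a small elaboration not spelled out in the paper, but otherwise the arguments coincide.
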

\begin{proof} 
Let $f \in C_c(\Sigma)$
and denote its support by $F$. Then $$\|\varphi^i(\Lambda(f))- f \|_* 
=\|\sum_{g \in F} \, \big(T^i(g,f(g)) -f(g)\big) \odot \delta_g \|_* 
\leq \sum_{g \in F} \, \|T^i(g,f(g)) -f(g)\| $$ 
Hence, it follows readily from $(ii)$ that  $\lim_i \| \varphi^i(\Lambda(f))- f \, \|_*�= 0$.
Taking into account $(i)$,
this means that condition $(iii)$ in Proposition \ref{netcond} is satisfied (with $\phi$ equal to the identity morphism). Hence $\Sigma$ is regular. 

\end{proof}
Conversely, if $\Sigma$ is regular, then a net satisfying all assumptions in  Proposition \ref{reg2}  trivially exists (as the map $I:G\times A \to A$ given by $I(g,a) = a$
 for all $g\in G, a\in A$, is  a rf-multiplier of $\Sigma$ in this case). 

\begin{definition}\label{weak-app}
 We will say that $\Sigma$ has the {\it  weak approximation property} if there exist an equivariant representation $(\rho, v)$ of $\Sigma$ on some Hilbert $A$-module $X$ and 
nets $\{\xi_i\}, \ \{\eta_i\} $ in $ X^G$ satisfying
\begin{itemize}
\item[a)] there exists some $ M > 0$ such that $\|\xi_i\| \cdot \|\eta_i\| \leq M $ \ for all $i $;
\item[b)] for all $g \in G$ and $a \in A$ we have $\lim_i \| \big\langle \xi_i\,,\,\check\rho(a)\check{v}(g)\eta_i \big\rangle  - a\|�= 0$, i.e.,
$$\, \lim_i   \sum_{h \in G} \big\langle \xi_i(h)\, , \,  \rho(a) \,v(g)\eta_i(g^{-1}h)\big\rangle \, =\,  a\, .$$

\end{itemize}
If one can choose $\eta_i = \xi_i$ for each $i$, we will say that $\Sigma$ has the {\it positive weak approximation property}.

\smallskip We will add the qualifying word {\it central} if the $\eta_i$'s and  the $\xi_i$'s can be chosen to lie in the central part of $X^G$.

\smallskip If $(\rho, v)$ can be chosen to be equal to $(\ell, \alpha)$, we will just talk about the corresponding {\it approximation property}.
\end{definition}  

\smallskip 
\begin{remark}
i) Trivially, $\langle 1\, , \ell(a)\, \alpha_g(1) \rangle = a $ 
for all $a \in A$ and $g \in G$. So the weak approximation property may be 
thought as expressing a kind of weak containment of the trivial equivariant representation  in some induced regular equivariant representation, 
hence as a form of amenability of $\Sigma$.

\smallskip ii) Without loss of generality, the nets $\{\xi_i\},\, \{\eta_i\} $ may both be assumed to lie in $C_c(G,X)$ (using that $C_c(G,X)$ is dense in $X^G$). 

\smallskip iii) Recall that $Z_X^{\, G}$ denotes the central part of $X^G$ (so  $\xi \in Z_X^{\,G}$ means that $\xi \in X^{G}$ and  $\rho(a) \,\xi(g) = \xi(g)\cdot a$ for all $\, a \in A$ and $\, g \in G$). 
Now, 
if all the $\xi_i$'s (or  the $\eta_i$'s) can be chosen 
to lie in $ Z_X^{\,G}$, 
then b) holds if and only if

\vspace{-1ex} \begin{equation}  
\lim_i \sum_{h \in G} \big\langle \xi_i(h), \, v(g) \eta_i(g^{-1}h) \big\rangle \, = \,1 
\end{equation}

\vspace{-1ex}for all $g \in G$.
\end{remark}

\begin{remark}\label{appdef}
The positive approximation property and the approximation property have previously been considered in the more general context of Fell bundles over discrete groups by Exel  \cite{Ex}�\, and over locally compact groups by Exel and Ng \cite{ExNg}.  In our setting, the approximation property of Exel says that
there exist
nets $\{\xi_i\},  \, \{\eta_i\}$ in $C_c(G,A)$ satisfying
\begin{itemize}
\item[i)] there exists some $ M > 0$ such that $\|\xi_i\| \cdot \|\eta_i\| \leq M$ \ for all $i$;
\item[ii)] $\, \lim_i \, \sum_{h \in G} \xi_i(gh)^* \, a \,\alpha_g(\eta_i(h))  = a\, $
for all $g \in G$ and $a \in A$.
\end{itemize}
This is easily seen to be equivalent to the definition of the approximation property we have given above. 
 Note that if  $\xi_i \in Z(A)^G$ (resp. $\eta_i \in Z(A)^G$) for each $i$, then ii) reduces to 
\begin{equation} \label{AD} 
  \lim_i \,  \sum_{h\in G}\, \xi_i(gh)^* \, \alpha_g(\eta_i(h))   = 1
\end{equation}
for all $g \in G$. It follows readily from this  that $\Sigma$ has the central positive approximation property whenever  $G$ is amenable. 
\end{remark}

\begin{remark} Assume that $\sigma$ is trivial.  In this case a strong form of the  central positive approximation property is discussed by Brown and Ozawa in their recent book \cite[Section 4.3]{BrOz}.
Their notion is closely related to the amenability of $\alpha$ as defined by  Anantharaman-Delaroche \cite{AD1}. 
When $A$ is abelian, these notions of amenability of $\alpha$ have been characterized in various ways: see e.g.  \cite{AD1, ExNg, BrOz, AD2}.
\end{remark}

The following result may be deduced from \cite{Ex} (see also \cite{ExNg}) in the case where $\Sigma$ has the approximation property. 

\begin{theorem} \label{mainreg} Assume that $\Sigma$ has the  weak approximation property. Then $\Sigma$ is regular. Moreover, $C^*(\Sigma) \simeq C^*_r(\Sigma)$ is nuclear if and only if $A$ is nuclear.
\end{theorem}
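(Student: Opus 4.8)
The plan is to prove regularity first and then handle nuclearity as a separate, largely orthogonal matter. For regularity, the strategy is to turn the weak approximation property directly into a net satisfying the hypotheses of Proposition \ref{reg2}. Given the equivariant representation $(\rho,v)$ on $X$ and the nets $\{\xi_i\},\{\eta_i\}$ in $X^G$ witnessing the weak approximation property, I would for each $i$ define $T^i:G\times A\to A$ by $T^i(g,a)=\langle \xi_i,\check\rho(a)\check v(g)\eta_i\rangle$. By Proposition \ref{T-mult}, each $T^i$ is a rf-multiplier of $\Sigma$ with $\|\varphi_{T^i}\|\le\|\xi_i\|\,\|\eta_i\|\le M$, which is condition (i) of Proposition \ref{reg2}; condition (ii), namely $\lim_i\|T^i(g,a)-a\|=0$ for all $g\in G$, $a\in A$, is exactly clause b) in Definition \ref{weak-app}. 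Proposition \ref{reg2} then immediately yields that $\Sigma$ is regular, i.e.\ $\Lambda:C^*(\Sigma)\to C^*_r(\Sigma)$ is an isomorphism, so $C^*(\Sigma)\simeq C^*_r(\Sigma)$.

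For the nuclearity claim, once we know $C^*(\Sigma)\simeq C^*_r(\Sigma)$, one direction is standard: if the crossed product is nuclear, then $A$, being (isomorphic to) a C$^*$-subalgebra that is the image of a conditional expectation — indeed $A\cong\ell_\Sigma(A)$ is the range of the faithful conditional expectation $E_\Sigma$ from the excerpt in Section 3 — is nuclear, since a C$^*$-algebra admitting a conditional expectation from a nuclear C$^*$-algebra onto it is itself nuclear. For the converse, suppose $A$ is nuclear. I would argue that the reduced crossed product $C^*_r(\Sigma)$ is nuclear whenever $A$ is nuclear \emph{and} $\Sigma$ has the (weak) approximation property; combined with regularity this gives nuclearity of $C^*(\Sigma)\simeq C^*_r(\Sigma)$. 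Concretely, the maps $\Phi_T$ produced in Proposition \ref{Phi-prop}, when $\eta_i=\xi_i$ can be arranged, are completely positive; in general one can symmetrise. Composing with the Fourier-transform/conditional-expectation machinery, the net $\{\varphi^i\}$ together with natural finitely-supported truncations provides an approximate factorisation of $\mathrm{id}_{C^*_r(\Sigma)}$ through algebras of the form $M_n(A)$ (or $A\otimes K(\ell^2 F)$ for finite $F\subseteq G$), which are nuclear because $A$ is. This exhibits $C^*_r(\Sigma)$ as a nuclear C$^*$-algebra via an approximation-by-nuclear-subquotients argument.

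I would organise the nuclearity part as follows: (1) reduce to showing $C^*_r(\Sigma)$ nuclear, using regularity; (2) for the ``only if'' direction invoke the conditional expectation $E_\Sigma$ and the hereditary-type stability of nuclearity under conditional expectations; (3) for the ``if'' direction, write $\mathrm{id}_{C^*_r(\Sigma)}$ as an approximate composition of c.b.\ (or c.p.) maps factoring through $A\otimes\mathcal K(\ell^2(F_i))$ for finite $F_i\uparrow G$, where the ``down'' maps come from Proposition \ref{Phi-prop} applied with $\xi_i,\eta_i$ (after a symmetrisation to get complete positivity if one wants the CPAP directly) and the ``up'' maps are the obvious inclusions composed with $\Lambda$ and the Fourier expansion; then invoke that each $A\otimes\mathcal K(\ell^2(F_i))$ is nuclear since $A$ is, and conclude by the standard characterisation of nuclearity via approximate factorisation.

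The main obstacle I anticipate is not the regularity part — that is essentially a bookkeeping assembly of Propositions \ref{T-mult}, \ref{reg2}, and \ref{netcond} — but the nuclearity ``if'' direction: one must be careful that the maps $\varphi^i:C^*_r(\Sigma)\to C^*(\Sigma)$ genuinely factor (approximately, in the point-norm topology, with uniformly bounded c.b.\ norm) through nuclear C$^*$-algebras, and that passing through the non-unital-looking truncations $\mathcal K(\ell^2(F_i))$ and through the identification $C^*(\Sigma)\simeq C^*_r(\Sigma)$ does not break complete positivity or the norm bounds. In particular, getting the completely positive approximation property (rather than merely a c.b.\ approximate factorisation) may require replacing $\{\xi_i,\eta_i\}$ by a symmetrised/diagonal net, or appealing to the fact that for C$^*$-algebras the c.b.\ and c.p.\ approximation properties coincide with nuclearity; I would lean on the latter to keep the argument short. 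If a fully self-contained treatment is desired, one can instead cite the corresponding Fell-bundle nuclearity result of Exel together with the identification of $C^*_r(\Sigma)$ with the reduced cross-sectional algebra of the associated Fell bundle.
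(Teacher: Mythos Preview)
Your proof of regularity is correct and exactly matches the paper's: the paper simply says ``The first assertion follows readily from Propositions \ref{T-mult} and \ref{reg2},'' which is precisely your assembly of $T^i(g,a)=\langle \xi_i,\check\rho(a)\check v(g)\eta_i\rangle$ into a net of rf-multipliers satisfying the hypotheses of Proposition \ref{reg2}. For the nuclearity assertion the paper does even less than you do --- it merely writes ``The second assertion may then be deduced from this in a standard way (see e.g.\ \cite{AD2, Ec, BrOz})'' --- so your sketch (conditional expectation for ``only if'', approximate factorisation through $A\otimes M_{|F_i|}$ for ``if'') is simply an unpacking of what those references contain, and your fallback of citing the Fell-bundle nuclearity result is equally in the spirit of the paper's proof.
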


\begin{proof} The first assertion follows readily from Propositions \ref{T-mult}  and  \ref{reg2}. The second assertion may then be deduced from this in a standard way (see e.g.\ \cite{AD2, Ec, BrOz}).  
\end{proof}

As alluded to in the introduction, it is conceivable that the weak approximation property is equivalent to the approximation property. Even if this happens to be true, the weak approximation property should still be considered as a useful tool, as will be illustrated in Proposition \ref{perm} and its corollaries. It seems that it can be easier to check in certain cases, as will be illustrated in Example \ref{exact}.

The following proposition shows that the corresponding central properties are equivalent:

\begin{proposition}  \label{central} Let $\Sigma'= (Z(A), G, \alpha',1)$ be defined as in Section 4. The following conditons are equivalent:
\begin{itemize} 
\item[(a)] $\Sigma$ has the  central weak approximation property. 
\item[(b)] $\Sigma$ has the central approximation property. 
\item[(c)] $\Sigma'$ has the weak approximation property.
\item[(d)] $\Sigma'$ has the approximation property.
\item[(e)] $\alpha'$ is amenable in the sense of Anantharaman-Delaroche. 
\end{itemize}
\end{proposition}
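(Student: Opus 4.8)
The plan is to prove the chain of equivalences (a) $\Leftrightarrow$ (b) $\Leftrightarrow$ (c) $\Leftrightarrow$ (d) $\Leftrightarrow$ (e) by reducing everything to statements about the untwisted system $\Sigma'=(Z(A),G,\alpha',1)$ and then invoking the known characterization of amenability of $\alpha'$. The key structural fact, already recorded in the excerpt, is that if $(\rho,v)$ is an equivariant representation of $\Sigma$ on a Hilbert $A$-module $X$, then $(\rho',v')$ is an equivariant representation of $\Sigma'$ on the central part $Z_X$, which is a Hilbert $Z(A)$-module, and that the induced regular equivariant representation restricts accordingly, with central part $Z_X^{\,G}=(Z_X)^G$. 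Moreover, for $z,z'\in Z_X$ we have $\langle z,z'\rangle\in Z(A)$, and for central $\xi,\eta\in Z_X^{\,G}$ the defining approximation identity of Definition \ref{weak-app}(b) for $\Sigma$ coincides verbatim with the corresponding identity for $\Sigma'$, because on central elements $\rho(a)v(g)\eta(g^{-1}h)=v(g)\eta(g^{-1}h)\cdot a$ and the $Z(A)$-valued inner product agrees.

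Concretely, I would proceed as follows. First, (b) $\Rightarrow$ (a) and (d) $\Rightarrow$ (c) are trivial, since the approximation property is the special case $(\rho,v)=(\ell,\alpha)$ (resp.\ $=(\ell',\alpha')$) of the weak approximation property. Second, for (a) $\Rightarrow$ (c): given an equivariant representation $(\rho,v)$ of $\Sigma$ on $X$ and central nets $\{\xi_i\},\{\eta_i\}$ in $Z_X^{\,G}$ witnessing the central weak approximation property, pass to $(\rho',v')$ on $Z_X$; then $\{\xi_i\},\{\eta_i\}$ lie in $(Z_X)^G$ and, using Remark iii) after Definition \ref{weak-app} together with the fact that $\langle z,z'\rangle\in Z(A)$, condition b) for $\Sigma$ becomes exactly condition b) for $\Sigma'$ (with $a$ ranging over $Z(A)$ it suffices to take $a=1$). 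The norm bound a) transfers since the $Z(A)$-valued inner product restricted to $Z_X$ has the same norm as the $A$-valued one. Hence $\Sigma'$ has the weak approximation property. Third, (c) $\Rightarrow$ (a): an equivariant representation of $\Sigma'$ is in particular an equivariant representation of $\Sigma$ whose module is already "central" (since $\sigma$ acts trivially there and $Z(A)$ is the coefficient algebra), so the witnessing nets automatically lie in the central part, giving (a) for $\Sigma$. One must be slightly careful here: a Hilbert $Z(A)$-module need not canonically become a Hilbert $A$-module, so the cleanest route is instead to close the loop differently, namely (c) $\Leftrightarrow$ (d) $\Leftrightarrow$ (e) for $\Sigma'$ alone, and then (a) $\Leftrightarrow$ (b) for $\Sigma$ via the reduction to $\Sigma'$ in both directions.

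For the block involving only $\Sigma'$, the equivalence (d) $\Leftrightarrow$ (e) is precisely the content of Remark \ref{appdef} combined with the discussion of Anantharaman-Delaroche amenability for untwisted actions: when $\sigma'=1$, Exel's approximation property in the form \eqref{AD} with $\eta_i\in Z(A)^G$ is the standard definition of amenability of the action $\alpha'$ on the abelian $C^*$-algebra $Z(A)$ (here $Z(A)$ is commutative, so one is in the setting where amenability has been characterized, cf.\ the references \cite{AD1, ExNg, BrOz, AD2}). The remaining equivalence (c) $\Leftrightarrow$ (d) for $\Sigma'$ is the assertion that the weak approximation property and the approximation property coincide when the coefficient algebra is abelian and $\sigma$ is trivial; this follows because for commutative $Z(A)$ one can always symmetrize and normalize the witnessing vectors so as to replace an arbitrary equivariant representation by $(\ell',\alpha')$ — explicitly, from $\{\xi_i\},\{\eta_i\}$ in $Z_X^{\,G}$ one builds the scalar-type functions $h\mapsto\langle\xi_i(h),v(h)\cdot\rangle$ and uses that $\langle v(g)z,v(g)z'\rangle=\alpha'_g(\langle z,z'\rangle)$ to land back in $Z(A)^G$, which is the module of the regular equivariant representation of $\Sigma'$.

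The main obstacle I anticipate is the bookkeeping in (c) $\Leftrightarrow$ (d), i.e.\ showing that for an abelian coefficient algebra the weak approximation property already implies the approximation property; this is the one place where something genuinely has to be done rather than merely transported, and it is exactly the commutative analogue of the (open) question for general $\Sigma$. Concretely one must verify that given central nets $\{\xi_i\},\{\eta_i\}$ for an arbitrary equivariant representation $(\rho',v')$ of $\Sigma'$ on $Z_X$, the functions $\zeta_i(g):=\text{(the element of }Z(A)\text{ pairing }\xi_i\text{ against }v'(g)\eta_i\text{ appropriately)}$ can be assembled into genuine elements of $C_c(G,Z(A))$ satisfying \eqref{AD}, keeping the product of norms uniformly bounded; the amenability of the abelian action is what makes this possible, so in the write-up I would simply cite the relevant characterization from \cite{AD1, AD2, BrOz} at that point rather than reprove it. Everything else — the transfer between $\Sigma$ and $\Sigma'$ via the central part, and the trivial implications from "approximation property" to "weak approximation property" — is routine given the earlier results in the excerpt.
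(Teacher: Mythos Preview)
Your handling of the easy implications is fine and matches the paper: $(b)\Rightarrow(a)$ and $(d)\Rightarrow(c)$ are trivial, $(a)\Rightarrow(c)$ goes through Proposition~\ref{centralmod} exactly as you describe, and $(b)\Leftrightarrow(d)$ is immediate once you notice that the central approximation property for $\Sigma$ and the approximation property for $\Sigma'$ are literally the same condition on nets in $Z(A)^G$. The equivalence $(d)\Leftrightarrow(e)$ is indeed a citation (the paper uses \cite[Corollary~4.6]{ExNg}).

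The gap is in your treatment of $(c)\Rightarrow(d)$. Your sketched construction ``build scalar-type functions $h\mapsto\langle\xi_i(h),v(h)\,\cdot\,\rangle$ and land back in $Z(A)^G$'' does not produce elements of $Z(A)^G$ with the required norm control, and your fallback --- ``the amenability of the abelian action is what makes this possible, so cite \cite{AD1,AD2,BrOz}'' --- is circular, since amenability of $\alpha'$ is precisely condition $(e)$, which you have not yet established from $(c)$. The cited references characterize amenability of an action on an abelian algebra, but none of them say directly that the \emph{weak} approximation property (for an arbitrary equivariant module) implies the approximation property; that is exactly the implication you are trying to prove.

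The paper closes the loop by a different and much cleaner route: it proves $(c)\Rightarrow(e)$ rather than $(c)\Rightarrow(d)$. If $\Sigma'$ has the weak approximation property, then Theorem~\ref{mainreg} (proved just above, independently of the present proposition) gives that $\Sigma'$ is regular. Since $Z(A)$ is abelian, hence nuclear, one may then invoke \cite[Theorem~4.5]{AD1}, which says that for a discrete action on a nuclear $C^*$-algebra, regularity forces amenability of the action in the sense of Anantharaman-Delaroche. This yields $(e)$ directly, and the chain $(b)\Rightarrow(a)\Rightarrow(c)\Rightarrow(e)\Leftrightarrow(d)\Leftrightarrow(b)$ is complete. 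The point you missed is that the heavy lifting has already been done in Theorem~\ref{mainreg}; you do not need to manufacture nets in $Z(A)^G$ by hand.
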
 
\begin{proof}
A moment's thought gives that $(b)$ is equivalent to $(d)$. The equivalence of $(d)$ and $(e)$ follows from \cite[Corollary 4.6]{ExNg}. The implication $(b) \Rightarrow (a)$ is trivial, while $(a) \Rightarrow (c)$ follows readily from Proposition \ref{centralmod}. So it suffices to show $(c) \Rightarrow (e)$. Assume that $\Sigma'$ has the weak approximation property. Then Theorem \ref{mainreg} gives that $\Sigma'$ is regular. As $Z(A)$ is nuclear, it follows from \cite[Theorem 4.5]{AD1} that $\alpha'$ is amenable in the sense of Anantharaman-Delaroche.
\end{proof}

The case where $\Sigma$ comes from a classical dynamical system, i.e.\ $A$ is abelian, 
has been studied by many authors, especially when $\sigma$ is trivial. Of course, the central approximation property and the approximation property are identical when $A$ is abelian. It may be worth stating explicitely  the following corollary (where the equivalence of $(a)$ and $(b)$ is due to Exel-Ng \cite{ExNg} in the untwisted case). 

\begin{corollary}  \label{central2} Assume $A$ is abelian. Then the following conditons are equivalent:
\begin{itemize} 
\item[(a)] $\Sigma$ has the  approximation property. 
\item[(b)] $\alpha$ is amenable in the sense of Anantharaman-Delaroche. 
\item[(c)] $\Sigma$ has the central weak approximation property. 

\end{itemize}
If $\sigma$ is scalar-valued, then any of these conditions is also equivalent to:

\begin{itemize} 

\item[(d)] $\Sigma$ has the weak approximation property.

\end{itemize}
\end{corollary}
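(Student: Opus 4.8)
\textbf{Proof proposal for Corollary \ref{central2}.}

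The plan is to deduce everything from Proposition \ref{central}, specialized to the case where $A$ is abelian, so that $Z(A) = A$, $\Sigma' = \Sigma$ (up to the harmless replacement of $\sigma$ by the trivial cocycle in the system $\Sigma'$), and the ``central'' qualifier becomes vacuous since every element of $X^G$ lies in the central part when $X = A$ is taken to be the trivial module. First I would observe that when $A$ is abelian, the central approximation property and the approximation property coincide (this is already noted in the text just before the corollary), and likewise the central weak approximation property and the (plain) central properties appearing in Proposition \ref{central} match up. Concretely, $(a)$ here is condition $(b)$ of Proposition \ref{central}, $(c)$ here is condition $(a)$ of Proposition \ref{central}, and $(b)$ here is condition $(e)$ of Proposition \ref{central} applied with $\alpha' = \alpha$ (legitimate because $\alpha'_g$ is just the restriction of $\alpha_g$ to $Z(A) = A$, hence $\alpha' = \alpha$). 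Thus the equivalence of $(a)$, $(b)$, $(c)$ is an immediate transcription of Proposition \ref{central}, with no new work required beyond pointing out these identifications.

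For the final clause, assume in addition that $\sigma$ is scalar-valued. The implication $(d) \Rightarrow (c)$ is trivial in general: the weak approximation property implies the central weak approximation property is \emph{not} automatic, so this is exactly the point where the scalar hypothesis must be used. The plan is to show $(c) \Rightarrow (d)$ is trivial (central weak approximation is a special case of weak approximation) and then to prove $(d) \Rightarrow (a)$ (equivalently $(d) \Rightarrow (b)$) under the assumption that $\sigma$ takes values in $\mathbb{T} \cdot 1$. The cleanest route I foresee: if $\Sigma$ has the weak approximation property then by Theorem \ref{mainreg} it is regular, and moreover $C^*_r(\Sigma)$ is nuclear since $A$ is nuclear (being abelian). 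Regularity plus nuclearity of the crossed product, for a twisted system with $A$ abelian and $\sigma$ scalar-valued, should force amenability of $\alpha$ in the sense of Anantharaman-Delaroche --- this is where I would invoke \cite[Theorem 4.5]{AD1} (the same ingredient used at the end of the proof of Proposition \ref{central}), possibly after first untwisting. Since $\sigma$ is a scalar $2$-cocycle, one can pass to an equivalent description of $C^*_r(\Sigma)$ as a reduced crossed product by an honest action on $A \otimes K$ or exploit that the scalar twist does not affect the dynamical-system structure of $\alpha$ on the (abelian) coefficient algebra; the nuclearity and regularity statements are insensitive to the scalar twist, so \cite[Theorem 4.5]{AD1} applies to conclude $\alpha$ is amenable, i.e.\ $(b)$ holds.

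The main obstacle I anticipate is precisely the step $(d) \Rightarrow (b)$: one must be careful that a scalar twist genuinely can be absorbed or ignored when invoking the Anantharaman-Delaroche machinery, since \cite{AD1} is written for untwisted systems. The safe way to handle this is to note that regularity of $\Sigma$ is equivalent to regularity of the Fell bundle associated with $\Sigma$, and that for $A$ abelian with scalar cocycle the relevant amenability notions for the Fell bundle reduce to amenability of $\alpha$ as an action on $A$ (the cocycle contributes only scalars, which do not obstruct the existence of the approximating nets in \eqref{AD}); alternatively, one can directly verify that the approximating net for $\alpha'$-amenability produced from regularity + nuclearity via \cite[Theorem 4.5]{AD1} lands in $Z(A)^G = A^G$ and satisfies \eqref{AD}, which is exactly the central approximation property $(a)$. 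Once that identification is secured, the corollary follows by chaining $(d) \Rightarrow (b) \Leftrightarrow (a) \Leftrightarrow (c) \Rightarrow (d)$.
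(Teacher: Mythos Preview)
Your treatment of the equivalence $(a)\Leftrightarrow(b)\Leftrightarrow(c)$ and of $(c)\Rightarrow(d)$ is correct and matches the paper exactly.

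For $(d)\Rightarrow(b)$ under the scalar hypothesis, however, you take a detour that introduces a real difficulty the paper avoids. The paper's argument is a one-liner: when $\sigma$ is scalar-valued, ${\rm ad}_\rho(\sigma(g,h))\,x = (\rho(\sigma(g,h))x)\cdot\sigma(g,h)^* = \sigma(g,h)\,\overline{\sigma(g,h)}\,x = x$ for any $\rho$ and any $x$, so condition (ii) in the definition of an equivariant representation reduces to $v(g)v(h)=v(gh)$, and conditions (i), (iii), (iv) never involve $\sigma$. Hence an equivariant representation of $\Sigma=(A,G,\alpha,\sigma)$ is \emph{literally the same object} as an equivariant representation of $\Sigma'=(A,G,\alpha,1)$, and the weak approximation property for $\Sigma$ is identical, as a condition, to the weak approximation property for $\Sigma'$. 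Since $A=Z(A)$, this is precisely condition $(c)$ of Proposition~\ref{central}, which yields $(e)$ there, i.e.\ $(b)$ here.

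Your route instead passes through regularity and nuclearity of $C^*_r(\Sigma)$ and then tries to invoke \cite[Theorem 4.5]{AD1}. The obstacle you yourself flag is genuine: that theorem is for untwisted systems, and the claim that ``regularity is insensitive to the scalar twist'' (i.e.\ that regularity of $(A,G,\alpha,\sigma)$ implies regularity of $(A,G,\alpha,1)$) is not obvious and would itself require a Packer--Raeburn or Fell-bundle argument. The paper sidesteps this entirely by untwisting at the level of equivariant representations --- where it is trivial --- rather than at the level of crossed products.
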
 
\begin{proof}
The equivalence of $(a), (b)$ and $(c)$ follows immediately from Proposition \ref{central}. The implication $(c) \Rightarrow (d)$ is trivial. If $\sigma$ is scalar-valued and $(d)$ holds, then $\Sigma' = (A, G, \alpha, 1)$ also has the weak approximation property, and  Proposition \ref{central} gives that $(b)$ holds.   

\end{proof}

By setting $A=\Complessi$ in Corollary \ref{central2}, we get that $(\Complessi, G, {\rm id}, \sigma)$ has the weak approximation property if and only if it has the approximation property, if and only if $G$ is amenable. Of course, this fact is just an easy consequence of the classical absorption principle. 

\smallskip In view of Proposition \ref{central}, we only refer to the central approximation property in our 
next result.

\begin{corollary} \label{amen} Assume that $\sigma$ is central (resp. $A$ has at least one tracial state). Then the following conditions are equivalent:\footnote{For this result to hold, it is important that  $A$ is unital, cf.\ \cite[Remark 5.3]{ZM}.} 
\begin{itemize}
\item[(a)]  $\Sigma$ has the central approximation property and there exists a state (resp.\ tracial state) on $A$ which is $\alpha$-invariant. 
\item[(b)]  $G$ is amenable. 
\end{itemize}
\end{corollary}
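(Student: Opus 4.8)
The plan is to prove the two implications separately, using the structure established so far. For $(b) \Rightarrow (a)$: if $G$ is amenable, then Remark \ref{appdef} (the discussion following equation (\ref{AD})) already shows that $\Sigma$ has the central positive approximation property. It remains to produce an $\alpha$-invariant state (resp. tracial state) on $A$. I would take any state $\omega_0$ on $A$ (resp. any tracial state, which exists by the hypothesis of the resp. case), form the function $g \mapsto \omega_0(\alpha_g(\cdot))$, and average it over $G$ using an invariant mean $m$ on $\ell^\infty(G)$. The resulting functional $\omega(a) = m\big(g \mapsto \omega_0(\alpha_g^{-1}(a))\big)$ is a state on $A$; invariance under each $\alpha_h$ follows from translation-invariance of $m$, once one checks that $\alpha_{h}^{-1}\alpha_g^{-1}$ differs from $\alpha_{(gh)}^{-1}$ only by an inner automorphism $\mathrm{Ad}(\sigma(\cdot,\cdot)^*)$, which $\omega_0$ absorbs when $\sigma$ is central (so that the inner perturbation is by a central unitary and disappears), or when $\omega_0$ is tracial (so that inner perturbations are invisible to $\omega_0$ altogether). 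One must be a little careful that $m$ is applied to a genuinely bounded scalar function, which it is.

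For $(a) \Rightarrow (b)$, which I expect to be the main obstacle: assume $\Sigma$ has the central approximation property, witnessed by nets $\{\xi_i\}, \{\eta_i\}$ in $Z(A)^G$ as in Remark \ref{appdef} satisfying (\ref{AD}), i.e. $\sum_{h} \xi_i(gh)^*\alpha_g(\eta_i(h)) \to 1$ for all $g$, with $\|\xi_i\|\|\eta_i\| \le M$; and assume there is an $\alpha$-invariant state (resp. tracial state) $\omega$ on $A$. The idea is to manufacture, from this data, an approximate invariant mean on $G$, i.e. to verify Reiter-type (or Godement-type) conditions. Concretely, I would apply $\omega$ to the scalar-valued pairings: set $f_i(g) = \omega\big(\xi_i(g)^*\xi_i(g)\big)^{1/2}$ (and similarly for $\eta_i$), viewing these as elements of $\ell^2(G)$ with uniformly bounded norm since $\omega$ is a state and $\sum_g \xi_i(g)^*\xi_i(g)$ converges with norm $\le \|\xi_i\|^2$. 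Using the Cauchy--Schwarz inequality in the Hilbert module $A^G$ together with the invariance $\omega\circ\alpha_g = \omega$, the approximation identity (\ref{AD}) forces $\langle \lambda(g) f_i^\eta, f_i^\xi\rangle \to 1 = \langle f_i^\xi, f_i^\xi\rangle$-type estimates uniformly on compact (here finite) subsets, which is exactly the statement that the left regular representation $\lambda$ of $G$ weakly contains the trivial representation, hence $G$ is amenable.

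The delicate point in $(a)\Rightarrow(b)$ is handling the twist and the noncommutativity: the elements $\xi_i(g)$ are central in $A$ but not scalars, so the "norm" $\omega(\xi_i(g)^*\xi_i(g))^{1/2}$ must be shown to interact correctly with the convolution-type sum in (\ref{AD}). This is where centrality of the $\xi_i$'s is essential — it lets one commute factors past each other and keep everything inside the commutative $C^*$-algebra generated by $\omega$'s support, and where $\alpha$-invariance of $\omega$ (tracial when $\sigma$ is merely central rather than scalar) lets one discard the $\sigma$-cocycle terms that appear when one expands $\alpha_g(\xi_i(gh)^*\xi_i(gh))$. I would organize this as: (1) reduce to $\xi_i = \eta_i$ by a polarization/averaging argument or by passing to the positive weak approximation property; (2) define $\phi_i \in \ell^2(G)_+$ by $\phi_i(g)^2 = \omega(\alpha_{g}^{-1}(\xi_i(g)^*\xi_i(g)))$ so that $\|\phi_i\|_2^2 = \omega(\langle \xi_i,\xi_i\rangle_\alpha) \le \|\xi_i\|_\alpha^2 \le M$; (3) estimate $|1 - \langle \lambda(g)\phi_i, \phi_i\rangle|$ against $\omega$ applied to (\ref{AD}) via Cauchy--Schwarz, obtaining $\langle\lambda(g)\phi_i,\phi_i\rangle \to \|\phi_i\|_2^2$ bounds that, after normalizing, yield almost-invariant unit vectors for $\lambda$; (4) conclude amenability of $G$. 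The tracial/central dichotomy enters only in step (3) to kill the twist, exactly as in step $(b)\Rightarrow(a)$, and the footnote's warning that $A$ must be unital is what guarantees $1 \in A$ so that (\ref{AD}) and the constant function witnessing step (2) make sense.
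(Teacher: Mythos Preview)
Your argument for $(b)\Rightarrow(a)$ is essentially the paper's: pick a state (resp.\ tracial state) $\varphi$ on $A$, average $g\mapsto\varphi\circ\alpha_g$ against an invariant mean on $\ell^\infty(G)$, and use centrality of $\sigma$ (resp.\ traciality of $\varphi$) to make the cocycle disappear in $\varphi(\alpha_g\alpha_h(a))=\varphi(\sigma(g,h)\alpha_{gh}(a)\sigma(g,h)^*)=\varphi(\alpha_{gh}(a))$. No differences worth noting there.

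For $(a)\Rightarrow(b)$ you take a genuinely different route. The paper does \emph{not} build almost-invariant $\ell^2(G)$ vectors by hand. Instead it passes to the commutative subsystem $\Sigma'=(Z(A),G,\alpha',1)$: Proposition~\ref{central} says the central approximation property for $\Sigma$ is exactly the approximation property for $\Sigma'$, Theorem~\ref{mainreg} then gives that $\Sigma'$ is regular, the $\alpha$-invariant state restricts to an $\alpha'$-invariant state on $Z(A)$, and finally a classical result of Zeller-Meier (\cite[Proposition~5.2]{ZM}) says that regularity of a crossed product together with an invariant state forces $G$ to be amenable. So the paper's argument is short precisely because it outsources the analytic work to Zeller-Meier and to the equivalences already collected in Proposition~\ref{central}.

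Your direct approach is in principle viable and more self-contained, but be aware of one soft spot: your step~(1), reducing to $\xi_i=\eta_i$ ``by polarization,'' is not automatic. Polarization turns a bilinear expression into a combination of diagonal ones, but it does not turn a two-net approximation condition into a single-net one with the same norm control. In practice you would probably justify this step by invoking the equivalence with Anantharaman-Delaroche amenability of $\alpha'$ (Proposition~\ref{central}(e)), whose standard formulations already come in positive form --- but that is essentially borrowing the paper's detour through $\Sigma'$. If you want a truly independent argument, you should either carry the two nets through and show that $g\mapsto\langle\lambda(g)\tilde\eta_i,\tilde\xi_i\rangle\to 1$ with bounded $A(G)$-norm already forces amenability (it does, but this needs an extra sentence), or work harder to manufacture positive nets directly from the given ones.
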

\begin{proof}  
Assume that $(a)$ holds. Then Proposition \ref{central} gives that $\Sigma'$ has the approximation property, hence that $\Sigma'$ is regular by Theorem \ref{mainreg}. Moreover, by restriction, there exists an $\alpha'$-invariant state on $Z(A)$. The amenability of $G$ follows then from \cite[Proposition 5.2]{ZM}. Hence $(b)$ holds.   

 Conversely, assume that $G$ is amenable. Then, as pointed out already, $\Sigma$ has the central approximation property. To show that there exists a state (resp.\ tracial state) on $A$ which is $\alpha$-invariant,
let $\varphi$ be a state (resp. tracial state) on $A$ and  $m$ a right translation invariant state on $\ell^{\infty}(G)$. 
For each $a\in A$ define $F(a) \in \ell^\infty(G)$ by $[(F(a)](g)= \varphi(\alpha_g(a)),\, g\in G$. Then, as is well known and easy to check, $\tilde{\varphi}= m \circ F$ gives a state (resp.�tracial state) on $A$. Moreover,
$$[F(\alpha_h(a))](g)=  \varphi(\alpha_g(\alpha_h(a))) = \varphi\big(\sigma(g,h)\alpha_{gh}(a))\sigma(g,h)^*\big) $$ $$= \varphi(\alpha_{gh}(a))= [F(a)](gh)$$ 
for each $a\in A, \, g,h \in G$. This means that $F(\alpha_h(a))$ is the right translate of $F(a)$ by $h$. Hence, the right invariance of $m$ gives that $\tilde{\varphi} $ is $\alpha$-invariant, as desired. 
\end{proof}

The assumptions in Corollary \ref{amen} are only used in the proof of $(b) \Rightarrow (a)$. 
One may wonder whether the following generalization of $(a) \Rightarrow (b)$ holds:  if $\Sigma$ has  the weak approximation property and there exists an $\alpha$-invariant state on $A$, then $G$ is amenable. We will show in Corollary \ref{inv} that this is true if we also assume that $\sigma$ is scalar-valued. 
We will first establish a permanence result for the weak approximation property, which illustrates the flexibility of this concept. 
Let us assume that the following conditions are satisfied: 
\begin{itemize}
\item $B$ is a C$^*$-subalgebra of $A$ containing the unit of $A$, 
\item  $B$ is invariant under each $\alpha_g, \, g\in G$. 
\item   $\sigma$  takes values in $\mathcal{U}(B)$.  

\end{itemize}
 Letting $\beta_g$ denote the restriction of $\alpha_g$ to $B$ for each  $g\in G$, we get a 
discrete twisted C$^*$-dynamical
 system $\Omega = (B, G, \beta, \sigma)$. We will call such a system for a $G$-{\it subsystem} of $\Sigma$ in the sequel.

\medskip A typical example of this situation is when  $B=Z(A)$ and $\sigma$ is assumed to be central. 
One may also consider 
the obvious product of two $G$-systems $(B, G, \beta, \sigma)$ and $(B', G, \beta', 1)$.

Note that $\Omega$  will not necessarily inherit the approximation property (or the weak approximation property) from $\Sigma$. For example, if $G$ is non-amenable, $B=\Complessi \cdot 1$ and $\sigma$ is scalar-valued, then $\Omega$ does not have the approximation property, while there are many known examples such that  $\Sigma$ do have it. 

To rule out this kind of example, we will require that there exists a conditional expectation $E:A \to B$  satisfying $E\circ \alpha_g = 
\beta_g \circ E$ for every $g\in G$ (i.e.\ $E$ is equivariant).  At first sight, it seems then reasonable that  if $\Sigma$ has the approximation property, then $\Omega$ will also have it, the reason being that if $\xi \in A^G$, then $\xi'= E\circ \xi$ is easily seen to lie in $B^G$. However, it is not obvious how to prove that  if $\{\xi_i\}, \, \{\eta_i\}$ are nets in $A^G$ witnessing the approximation property for $\Sigma$, then $\{\xi'_i\}, \{\eta'_i\}$ will be such nets for $\Omega$. 
Nevertheless, t 
we can show the following: 

\begin{proposition} \label{perm} Assume that
 $\Omega = (B, G, \beta, \sigma)$ is a  $G$-subsystem of $\Sigma$ with an equivariant conditional expectation $E:A \to B$. Then $\Omega$ has the weak approximation property whenever $\Sigma$ has the weak approximation property.
\end{proposition}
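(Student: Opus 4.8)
The plan is to transport the data witnessing the weak approximation property for $\Sigma$ down to $\Omega$ using the equivariant conditional expectation $E$. Suppose $(\rho, v)$ is an equivariant representation of $\Sigma$ on a Hilbert $A$-module $X$, together with nets $\{\xi_i\}, \{\eta_i\}$ in $X^G$ satisfying conditions a) and b) of Definition \ref{weak-app}. The first step is to manufacture an equivariant representation of $\Omega$ out of $(\rho, v)$. The natural candidate is to view $X$ as a Hilbert $B$-module by restricting the $A$-valued inner product and composing with $E$, i.e.\ to set $\langle x, x'\rangle_E = E(\langle x, x'\rangle)$ and to cut down the right action to $B$. The equivariance identity (iii) for $\Sigma$, namely $\alpha_g(\langle x, x'\rangle) = \langle v(g)x, v(g)x'\rangle$, together with $E \circ \alpha_g = \beta_g \circ E$, gives $\beta_g(\langle x, x'\rangle_E) = \langle v(g)x, v(g)x'\rangle_E$, so $v$ still satisfies (iii) relative to $\Omega$; similarly (iv) descends since $B$ is $\alpha$-invariant. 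However $\rho$ need not restrict to a $\L$-valued map on the new module, so one should first complete/separate: let $X_B$ denote the Hilbert $B$-module obtained from $X$ with inner product $\langle\cdot,\cdot\rangle_E$ (quotienting by the null space of $\langle\cdot,\cdot\rangle_E$ and completing). One must check that $\rho(b)$ for $b \in B$, and $v(g)$, pass to bounded adjointable maps on $X_B$ — here the key points are that $E$ is $B$-bimodular so $\rho(b)$ is adjointable with respect to $\langle\cdot,\cdot\rangle_E$, and that $v(g)$ remains bounded (its boundedness on $X$ plus the inequality $E(\langle x,x\rangle) \le \|x\|^2$ in the appropriate sense). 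Denote the resulting equivariant representation of $\Omega$ by $(\rho_B, v_B)$, acting on $X_B$.

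The second step is to push the nets through. Let $q: X \to X_B$ be the canonical contraction, and set $\xi_i' = q \circ \xi_i$, $\eta_i' = q\circ\eta_i$, viewed as elements of $(X_B)^G$; one checks these lie in $(X_B)^G$ because $\sum_g \langle q\xi_i(g), q\xi_i(g)\rangle_E = \sum_g E(\langle \xi_i(g),\xi_i(g)\rangle)$ converges in $B$ by applying the (contractive, positive) map $E$ termwise to the convergent sum in $A$. Moreover $\|\xi_i'\| \le \|\xi_i\|$ and $\|\eta_i'\| \le \|\eta_i\|$, so condition a) with the same bound $M$ is immediate. For condition b), fix $b \in B$ and $g \in G$; one computes, for each $i$,
\begin{align*}
\langle \xi_i',\, \check{\rho_B}(b)\check{v_B}(g)\eta_i'\rangle
&= \sum_{h\in G}\big\langle q\xi_i(h),\, q\big(\rho(b)v(g)\eta_i(g^{-1}h)\big)\big\rangle_E \\
&= \sum_{h\in G} E\big(\langle \xi_i(h),\, \rho(b)v(g)\eta_i(g^{-1}h)\rangle\big)
= E\big(\langle \xi_i,\, \check\rho(b)\check v(g)\eta_i\rangle\big).
\end{align*}
Since $b \in B$ and $E|_B = \mathrm{id}_B$, continuity of $E$ together with condition b) for $\Sigma$ gives $\lim_i \|\langle \xi_i',\check{\rho_B}(b)\check{v_B}(g)\eta_i'\rangle - b\| = \lim_i \|E(\langle \xi_i,\check\rho(b)\check v(g)\eta_i\rangle - b)\| = 0$. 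This establishes b) for $\Omega$, and hence $\Omega$ has the weak approximation property.

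I expect the main obstacle to be the clean construction of $(\rho_B, v_B)$ on $X_B$ — in particular verifying that $\rho(b)$ and $v(g)$ descend to well-defined \emph{adjointable} operators on the quotient-completion $X_B$, rather than merely bounded $\mathbb{C}$-linear maps. The adjointability of $\rho_B(b)$ should follow from $\langle \rho(b)x, x'\rangle_E = E(\langle \rho(b)x,x'\rangle) = E(\langle x, \rho(b^*)x'\rangle) = \langle x, \rho_B(b^*)x'\rangle_E$, so $\rho_B(b)^* = \rho_B(b^*)$; for $v_B(g)$ one similarly identifies the adjoint/inverse using identity (iii) and the fact that $v(g)^{-1}$ is available, getting $v_B(g) \in \mathcal{I}(X_B)$. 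One must also confirm that the null space $N = \{x : E(\langle x,x\rangle) = 0\}$ is invariant under $\rho(B)$ and under $v(G)$ so that the operators genuinely pass to the quotient; invariance under $v(g)$ uses (iii) and $E\circ\alpha_g = \beta_g\circ E$, and invariance under $\rho(b)$ uses the Cauchy–Schwarz inequality for the $B$-valued form $\langle\cdot,\cdot\rangle_E$. Once these routine but slightly delicate module-theoretic points are dispatched, the rest is the bookkeeping displayed above. A remark worth adding afterward: the same argument, restricted to central parts, shows the \emph{central} weak approximation property is likewise inherited, and when $(\rho,v) = (\ell,\alpha)$ one can take $X_B = B$ with $(\ell_B,\beta)$, recovering a statement about the plain approximation property under the additional hypothesis that the $\xi_i,\eta_i$ can be chosen in $B^G$ to begin with.
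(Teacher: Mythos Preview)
Your proposal is correct and follows essentially the same approach as the paper's proof: localize $X$ via the $B$-valued inner product $\langle x,x'\rangle_E = E(\langle x,x'\rangle)$, check that $\rho|_B$ and $v$ descend to give an equivariant representation of $\Omega$, and push the nets through using contractivity of $E$ and $E|_B = {\rm id}_B$. The only organizational difference is that the paper first treats the case where $E$ is faithful (so no quotient is needed and one appeals directly to the localization construction in \cite{La1}, which shows every $T\in\L(X)$ is automatically adjointable on $X_B$), and then handles the non-faithful case by quotienting and completing as you do from the start; your unified treatment is perfectly fine.
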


\begin{proof} Assume that  $\Sigma$ has the weak approximation property, so there exist an equivariant representation $(\rho, v)$ of $\Sigma$ on some Hilbert $A$-module $X$ and 
nets $\{\xi_i\}, \ \{\eta_i\} $ in $ X^G$ satisfying a) and b) in Definition \ref{weak-app} for some $M>0$. 

\medskip We first treat the case where $E$ is faithful. By localization (see \cite{La1}), we may then turn $X$ into a Hilbert $B$-module, with  inner product given by $$\langle x, x'\rangle_B = E\big(\langle x, x'\rangle\big), \quad x, x' \in X\,.$$
To avoid confusion, we will write $X_B$ to denote $X$ when it is considered as a Hilbert $B$-module, and set $\|x\|_B = \|\langle x, x\rangle_B\| ^{1/2} = \|E(\langle x, x'\rangle)\|^{1/2}$.
 
\medskip  Now, if  $T\in\L(X)$, then $$\langle Tx, x'\rangle_B = E\big(\langle Tx, x'\rangle\big)=  E\big(\langle x, T^*x'\rangle\big)= \langle x, T^*x'\rangle_B$$
for all $ x, x' \in X_B\,, $ so $T\in \L(X_B)$. 
Hence, we get a representation $\rho'$ from $B$ into $\L(X_B)$ by setting $\rho'(b)= \rho(b) \in \L(X_B)$ for each $b \in B$.  

\medskip Moreover, each $v(g)$ is an isometry as a map from $X_B$ into itself. To see this, note that
$$\langle v(g)x, v(g)x'\rangle_B = E\big(\langle v(g)x, v(g)x'\rangle\big) = E\big( \alpha_g(\langle x, x'\rangle) \big) $$ $$= 
\beta_g \big(E(\langle x, x'\rangle) \big) = \beta_g\big(\langle x, x'\rangle_B\big) $$
for all $g \in G, \, x, x' \in X_B\,.$ 
Thus, we get
$$\|v(g)x \|_B = \|\beta_g\big(\langle x, x\rangle_B\big)\|^{1/2} =\|\langle x, x\rangle_B\|^{1/2} = \|x\|_B $$
for all $g \in G, \, x \in X_B\,.$ 

\medskip Since each $v(g)$ is invertible as a map from $X_B$ into itself,   $v:g \to v(g)$ is a map from $G$ into $\mathcal{I}(X_B)$. It is then straightforward to check that $(\rho', v)$ is an equivariant representation of $\Omega$ on $X_B$. For instance, the computation we did previously shows that third condition holds.

\medskip We also note that if $\xi \in X^G$, then $\sum_{g\in G}\, \langle \xi(g), \xi(g)\rangle = a $ for some $a \in A$, and we get 
$$\sum_{g\in G}\, \langle \xi(g), \xi(g)\rangle_B=  \sum_{g\in G} \,E\big(\langle \xi(g), \xi(g)\rangle\big) = E\, \Big(\sum_{g\in G} \langle \xi(g), \xi(g)\rangle\Big)  = E(a) \in B$$
Hence, $\xi \in (X_B)^G$ and its norm $\|\xi\|_B$ in $(X_B)^G$ satisfies $$\|\xi'\|_B =\| \sum_{g\in G} \, \langle \xi(g), \xi(g)\rangle_B\|^{1/2} = \|E(a)\|^{1/2} \leq \|a\|^{1/2} = \|\xi\|$$
This means that  $\{\xi_i\}, \ \{\eta_i\} $ are nets in $ (X_B)^G$ satisfying 
$$\|\xi_i\|_B\cdot \|\eta_i\|_B \leq \|\xi_i\|\cdot \|\eta_i\| \leq M $$
for all $i$. Moreover, for any $b\in B$ and $g\in G$, we have
$$\,    \sum_{h \in H} \big\langle \xi_i(h)\, , \,  \rho'(b) \,v(g)\eta_i(g^{-1}h)\big\rangle_B \, =
\, E\Big(  \sum_{h \in G} \big\langle \xi_i(h)\, , \,  \rho (b) \,v(g)\eta_i(g^{-1}h)\big\rangle\Big)
$$
which converges to $E(b) = b$ in the norm of $B$. It follows that $\Omega$ has the weak approximation property, as desired.

\medskip Assume now that $E$ is not faithful. Then $\langle \cdot, \cdot\rangle_B$ gives only a semi inner product on $X$, but factoring out the kernel $N=\{ x \in X \mid \langle x, x\rangle_B=0\}$ and completing $X/N$ in the usual way (cf.\ \cite{La1}), we obtain a Hilbert $B$-module  $X'_B$ whose inner product satisfies
$$\big\langle x+N, x'+N\big\rangle'_B=\big \langle x, x'\big\rangle_B= E(\langle x, x'\rangle), \quad x,x' \in X.$$
 As shown in \cite[p. 57-58]{La1}, there exists a unital homomorphism $\pi_B: \L(X) \to \L(X'_B)$ satisfying  $[\pi_B(T)](x+N) =Tx + N$ for each $T\in \L(X), \, x \in X$. Hence, the map $\rho': B \to \L(X'_B)$ defined by $\rho'(b)= \pi_B(\rho(b))$ gives a representation of $B$ on $X'_B$. Moreover, it is clear that $v(g)N \subseteq N$ for each $g\in G$, so the map $v'(g) : x + N \to v(g)x + N$ is well defined on $X/N$. Since 
 $$\big\langle v'(g)(x+N), v'(g)(x'+N)\big\rangle'_B= \big\langle v(g)x, v(g)x'\big\rangle_B= \beta_g\big(\big\langle x, x'\big\rangle_B\big)$$ $$=\beta_g\big(\big\langle x+N, x'+N\big\rangle'_B\big) $$ for all $ x,x' \in X$, 
 it follows readily that $v'(g)$ extends to an isometry on $X'_B$ for each $g\in G$. 
 
 A straightforward computation gives that $v'(g)v'(h)= {\rm ad}_{\rho'}(\sigma(g,h)) v'(gh)$ on $X/N$, and therefore also on $X'_B$ by density. We especially have $v'(e)v'(h)=v'(h)$ for all $h\in G$, and it follows easily from this and the invertibility of $v(h)$ on $X$ that $v'(e)$ is the identity operator on $X'_B$. So for each $g\in G$ we have $v'(g) v'(g^{-1}) = {\rm ad}_{\rho'}(\sigma(g,g^{-1}))$, which gives that $v'(g):X'_B\to X'_B$ is invertible with $v'(g)^{-1}= v'(g^{-1})\, {\rm ad}_{\rho'}(\sigma(g,g^{-1})^*)$.  Thus $v':g\to v'(g)$ is a map from $G$ into $\mathcal{I}(X'_B)$. 
 
 \medskip It is now a routine exercise to proceed further, essentially as in the faithful case, and reach the desired conclusion.

\end{proof}

\begin{corollary} \label{abel} Assume that $\sigma$ is scalar-valued. 
  Consider a  $G$-subsystem $\Omega = (B, G, \beta, \sigma)$ of $\Sigma$  with $B$ abelian, and assume that there exists an equivariant conditional expectation $E:A \to B$. 
 
  Then $\Omega$ has the approximation property (equivalently, $\beta$ is amenable in the sense of Anantharaman-Delaroche) whenever $\Sigma$ has the weak approximation property (hence, especially when $\Sigma$ has the approximation property).
\end{corollary}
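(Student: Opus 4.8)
The plan is to obtain this as a short consequence of two results already established: the permanence result Proposition~\ref{perm} and the characterization of the approximation property in the abelian case, Corollary~\ref{central2}. First I would dispose of the parenthetical remark: since choosing $(\rho,v)=(\ell,\alpha)$ exhibits the approximation property of $\Sigma$ as a special instance of its weak approximation property, it suffices to prove the statement under the assumption that $\Sigma$ merely has the weak approximation property.

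Next I would check that the hypotheses of Proposition~\ref{perm} are met. By assumption $B$ is a unital $C^*$-subalgebra of $A$ invariant under each $\alpha_g$, and $\sigma$ is scalar-valued, hence takes values in $\mathbb{T}\cdot 1\subseteq \U(B)$ (using $1\in B$); thus $\Omega=(B,G,\beta,\sigma)$ is indeed a $G$-subsystem of $\Sigma$. Together with the given equivariant conditional expectation $E:A\to B$, Proposition~\ref{perm} applies and yields that $\Omega$ itself has the weak approximation property.

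Finally, I would apply Corollary~\ref{central2} to the system $\Omega$: its algebra $B$ is abelian and its cocycle $\sigma$ is scalar-valued, so condition (d) there (the weak approximation property for $\Omega$) is equivalent to condition (a) (the approximation property for $\Omega$) and to condition (b) ($\beta$ being amenable in the sense of Anantharaman-Delaroche). Combining this equivalence with the conclusion of the previous paragraph gives precisely the assertion of the corollary.

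I do not expect a genuine obstacle here: all the substantive work has been pushed into Proposition~\ref{perm} (whose proof handles the non-faithful case of $E$ by passing to a quotient Hilbert module) and into Corollary~\ref{central2} (which relies in turn on Proposition~\ref{central} and hence on \cite{ExNg} and \cite{AD1}). The only point deserving a moment's care is the bookkeeping just mentioned — verifying that ``$\sigma$ scalar-valued'' simultaneously legitimizes $\Omega$ as a $G$-subsystem and makes Corollary~\ref{central2}(d) available for $\Omega$ — after which the three implications chain together immediately.
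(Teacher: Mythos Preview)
Your proposal is correct and follows essentially the same approach as the paper, which simply states that the result follows immediately from Corollary~\ref{central2} and Proposition~\ref{perm}. Your version is a careful unpacking of that one-line proof, including the verification that scalar-valuedness of $\sigma$ both ensures $\Omega$ is a $G$-subsystem and makes part~(d) of Corollary~\ref{central2} available.
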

\begin{proof}
This follows immediately from Corollary \ref{central2} and Proposition  \ref{perm}.

\end{proof}  

\begin{corollary}  Assume that $\sigma$ is scalar-valued and that there exists an equivariant conditional expectation $E:A\to Z(A)$. 
Then all the approximation properties for $\Sigma$  (central or not) are equivalent, being all equivalent to the amenability of $\alpha'$ in the sense of Anantharaman-Delaroche.

\end{corollary}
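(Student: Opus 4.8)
The plan is to observe that, among the (at most) eight approximation properties of $\Sigma$ obtained by combining the qualifiers \emph{central}, \emph{positive} and \emph{weak}, the weak approximation property is the weakest one and the central positive approximation property is the strongest one, every implication in between being immediate from Definition~\ref{weak-app} (passing to the central part, imposing $\eta_i=\xi_i$, and specialising $(\rho,v)$ to $(\ell,\alpha)$ each only strengthen the condition). Hence it suffices to establish the two non-trivial implications: if $\Sigma$ has the weak approximation property then $\alpha'$ is amenable in the sense of Anantharaman--Delaroche, and conversely if $\alpha'$ is amenable then $\Sigma$ has the central positive approximation property. Together these close the cycle and show that all eight properties, together with amenability of $\alpha'$, are equivalent.

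For the first implication I would note that $\Omega=(Z(A),G,\alpha',\sigma)$ is a $G$-subsystem of $\Sigma$: $Z(A)$ is a unital $C^*$-subalgebra of $A$, it is invariant under every $\alpha_g$ since automorphisms preserve the center, $\alpha'_g$ is the restriction of $\alpha_g$, and the scalar-valued $\sigma$ takes its values in $\mathbb{T}\cdot 1\subseteq\U(Z(A))$. Since $Z(A)$ is abelian and, by hypothesis, there is an equivariant conditional expectation $E\colon A\to Z(A)$, Corollary~\ref{abel} applies with $B=Z(A)$ and yields that $\Omega$ has the approximation property --- equivalently, $\alpha'$ is amenable --- whenever $\Sigma$ has the weak approximation property. (One can also split this into Proposition~\ref{perm} followed by Corollary~\ref{central2} applied to $\Omega$; and Proposition~\ref{central} gives, as a consistency check, that amenability of $\alpha'$ is in turn equivalent to the central approximation property of $\Sigma$.)

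For the second implication I would use that $Z(A)$ is abelian: by the Anantharaman--Delaroche/Exel--Ng description of amenability of an action on an abelian $C^*$-algebra (see \cite{AD1, ExNg}, and compare Remark~\ref{appdef}), amenability of $\alpha'$ provides a net $\{\xi_i\}$ in $C_c(G,Z(A))$ with $\sup_i\|\xi_i\|<\infty$ and $\lim_i\sum_{h\in G}\xi_i(gh)^*\,\alpha'_g(\xi_i(h))=1$ for every $g\in G$. Because $Z(A)^G$ is precisely the central part of $A^G$ and $\alpha'_g=\alpha_g|_{Z(A)}$, taking $(\rho,v)=(\ell,\alpha)$ and $\eta_i=\xi_i$, and invoking the reduction of condition b) of Definition~\ref{weak-app} recorded in part iii) of the remark immediately following that definition, one checks that these data witness the central positive approximation property of $\Sigma$. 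The only step here that is not pure bookkeeping is this passage from amenability of $\alpha'$ to the central positive property, and even it imports its substance from \cite{AD1, ExNg}: the content is that amenability of an action on an abelian $C^*$-algebra is always witnessed by a net of positive, finitely supported, almost-invariant functions (the square roots of an approximate partition of unity in $C_c(G,Z(A))$), together with the elementary observation that such a net, being supported in $Z(A)$, automatically fits the central positive version of Definition~\ref{weak-app} for $\Sigma$. Everything else is a combination of Proposition~\ref{perm}, Corollary~\ref{abel}, Corollary~\ref{central2} and the trivial implications among the various approximation properties.
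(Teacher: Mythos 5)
Your proof is correct and follows essentially the same route as the paper, which simply applies Corollary \ref{abel} to the $G$-subsystem $(Z(A),G,\alpha',\sigma)$ and then invokes Proposition \ref{central} to close the cycle. The only difference is that you spell out how amenability of $\alpha'$ yields the strongest (central positive) variant via the Anantharaman--Delaroche/Exel--Ng net characterization, a step the paper leaves implicit inside Proposition \ref{central}.
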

\begin{proof}
By considering the $G$-subsystem $(Z(A), G, \alpha', \sigma)$, the assertion follows  from Corollary \ref{abel} and Proposition \ref{central}.

\end{proof}

\medskip We obtain the result mentioned after Corollary \ref{amen}:

\begin{corollary} \label{inv} Assume that $\sigma$ is scalar-valued. If $\Sigma$ has  the weak approximation property and there exists an $\alpha$-invariant state on $A$, then $G$ is amenable.
\end{corollary}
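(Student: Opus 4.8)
The plan is to use the $\alpha$-invariant state to ``collapse'' the Hilbert $A$-module data witnessing the weak approximation property down to a single Hilbert space carrying a genuine unitary representation of $G$, which will be seen to weakly contain the trivial representation, so that $G$ is amenable by the classical Hulanicki--Reiter characterization of amenability. The first step is a reduction to the untwisted case. Since $\sigma$ is scalar-valued, for any equivariant representation $(\rho,v)$ of $\Sigma$ on a Hilbert $A$-module $X$ one has $\rho(\sigma(g,h))=\sigma(g,h)\,I_X$ and hence ${\rm ad}_\rho(\sigma(g,h))={\rm id}_X$, so condition (ii) in the definition of an equivariant representation says exactly that $v$ is a group homomorphism. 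As conditions (i), (iii), (iv) and the conditions a), b) of Definition~\ref{weak-app} do not involve $\sigma$, it follows (just as in the proof of Corollary~\ref{central2}) that the untwisted system $(A,G,\alpha,1)$ also has the weak approximation property, while still carrying the given $\alpha$-invariant state; so from now on I would assume $\sigma=1$.

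Next, I would fix an $\alpha$-invariant state $\varphi$ on $A$, an equivariant representation $(\rho,v)$ of $(A,G,\alpha,1)$ on a Hilbert $A$-module $X$, and bounded nets $\{\xi_i\},\{\eta_i\}$ in $X^G$ witnessing the weak approximation property, with $\|\xi_i\|\,\|\eta_i\|\le M$. Equip $X$ with the positive semidefinite form $\langle x,x'\rangle_\varphi:=\varphi(\langle x,x'\rangle)$, divide out its kernel and complete, obtaining a Hilbert space $H_\varphi$; write $[x]\in H_\varphi$ for the class of $x\in X$. By condition (iii) in the definition of an equivariant representation together with the $\alpha$-invariance of $\varphi$,
\[
\big\langle v(g)x\,,\,v(g)x'\big\rangle_\varphi=\varphi\big(\alpha_g(\langle x,x'\rangle)\big)=\varphi(\langle x,x'\rangle)=\langle x,x'\rangle_\varphi ,
\]
so each $v(g)$ descends to an isometry $V(g)$ of $H_\varphi$ with $V(g)[x]=[v(g)x]$; since $v$ is a homomorphism with $v(e)=I_X$, the assignment $g\mapsto V(g)$ is a unitary representation of $G$ on $H_\varphi$. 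Setting $\hat\xi_i(h):=[\xi_i(h)]$ and $\hat\eta_i(h):=[\eta_i(h)]$, one checks routinely (using positivity and norm-continuity of $\varphi$ to interchange it with norm-convergent sums in $A$) that $\hat\xi_i,\hat\eta_i\in\ell^2(G,H_\varphi)$, with $\|\hat\xi_i\|^2=\varphi(\langle\xi_i,\xi_i\rangle)\le\|\xi_i\|^2$ and likewise for $\hat\eta_i$, so that $\sup_i\|\hat\xi_i\|\,\|\hat\eta_i\|\le M$.

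The heart of the argument is then the identity, for the unitary representation $\Lambda_G:=\lambda\otimes V$ of $G$ on $\ell^2(G)\otimes H_\varphi\simeq\ell^2(G,H_\varphi)$ (with $\lambda$ the left regular representation of $G$),
\[
\big\langle\hat\xi_i\,,\,\Lambda_G(g)\hat\eta_i\big\rangle=\sum_{h\in G}\varphi\big(\langle\xi_i(h)\,,\,v(g)\eta_i(g^{-1}h)\rangle\big)=\varphi\big(\langle\xi_i\,,\,\check\rho(1)\check v(g)\eta_i\rangle\big),
\]
which by condition b) converges to $\varphi(1)=1$ for every $g\in G$ (in particular $H_\varphi\neq 0$). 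Now $\Lambda_G=\lambda\otimes V$ is, by Fell's classical absorption principle \cite{Di}, unitarily equivalent to a multiple of $\lambda$; hence the linear functional $\psi_i$ on the full group $C^*$-algebra $C^*(G)$ determined by $\psi_i(g):=\langle\hat\xi_i,\Lambda_G(g)\hat\eta_i\rangle$ vanishes on $\ker\big(C^*(G)\to C^*_r(G)\big)$ and satisfies $\|\psi_i\|\le M$. Since $\psi_i(g)\to 1$ for all $g\in G$, $\sup_i\|\psi_i\|<\infty$ and $C_c(G)$ is dense in $C^*(G)$, the net $\psi_i$ converges in the weak-$*$ topology to the trivial character $\varepsilon$ of $C^*(G)$; therefore $\varepsilon$ also vanishes on $\ker\big(C^*(G)\to C^*_r(G)\big)$, i.e.\ the trivial representation of $G$ is weakly contained in $\lambda$, and hence $G$ is amenable.

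The one point I expect to need care is that the weak approximation property only supplies ``two-sided'' coefficients $\langle\xi_i,\,\cdot\,\eta_i\rangle$ with possibly $\xi_i\neq\eta_i$ (we are not assuming the positive version), so one cannot directly extract a net of almost-invariant unit vectors for $\lambda$; this is why the argument is routed through $C^*(G)^*$, using that coefficients of multiples of $\lambda$ factor through $C^*_r(G)$ and that a weak-$*$ limit of a uniformly bounded net of such functionals does so as well. Everything else is routine: the well-definedness of $V(g)$ on $H_\varphi$ and the various ``collapsing by $\varphi$'' manipulations (interchanging $\varphi$ with infinite sums, passing to the quotient) are straightforward, and $H_\varphi\neq 0$ follows at once from the limit being $1\neq 0$.
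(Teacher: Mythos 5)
Your proof is correct, but it takes a genuinely different route from the paper's. The paper disposes of this corollary in one line: the $\alpha$-invariant state $\varphi$ gives an equivariant conditional expectation $a\mapsto\varphi(a)1$ onto the $G$-subsystem $(\Complessi\cdot 1,G,{\rm id},\sigma)$, so the permanence result (Proposition \ref{perm}) transfers the weak approximation property to that subsystem, and Corollary \ref{abel} together with Corollary \ref{central2} (which rests on the Exel--Ng and Anantharaman-Delaroche equivalences) then forces $G$ to be amenable. Your argument is essentially a self-contained unfolding of that chain in the special case $B=\Complessi\cdot 1$: your GNS-type collapse of $X$ at $\varphi$ is exactly the localization performed in the proof of Proposition \ref{perm} when the conditional expectation is a state (including the quotient by the null space when $\varphi$ is not faithful), and your endgame replaces the citations to \cite{ExNg} and \cite{AD1} by a direct classical argument: the coefficients of $\lambda\otimes V$ against $\hat\xi_i,\hat\eta_i$ converge pointwise to $1$, the uniform bound $\|\psi_i\|\le M$ and weak-$*$ closedness of the annihilator of $\ker(C^*(G)\to C^*_r(G))$ show the trivial character factors through $C^*_r(G)$, and Hulanicki's theorem finishes. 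Your observation that one must route through $C^*(G)^*$ rather than extract almost-invariant vectors (because $\xi_i\neq\eta_i$ in general) is exactly the right point of care, and the reduction to $\sigma=1$ via ${\rm ad}_\rho(\sigma(g,h))={\rm id}_X$ matches the paper's own reduction in Corollary \ref{central2}. What the paper's route buys is brevity and the stronger intermediate conclusion that the subsystem actually has the (genuine, not just weak) approximation property; what yours buys is independence from the cited amenability machinery and a transparent picture of where amenability of $G$ comes from.
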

\begin{proof}
It suffices to use Corollary \ref{abel} on the $G$-subsystem $(\Complessi\cdot1, G, {\rm id}, \sigma)$.

\end{proof}

We conclude the paper with two examples. 

\begin{example} \label{exact}

Let $G$ be an exact group, $\sigma \in Z^2(G, \Toro)$ and  $\alpha$ denote the action of $G$ on $A=\ell^\infty(G)$ by left translations. Then $\alpha$ is amenable in the sense of Anantharaman-Delaroche (see \cite{BrOz} and  \cite{AD2}, and references therein). Hence  
$\Sigma=(A, G, \alpha, \sigma)$ has the approximation property, as follows from Corollary \ref{central2}.

Now, let $H$ be any amenable subgroup of $G$ and let $\gamma$ denote the action of $H$ on $A$ by right translations.
Since the actions $\alpha$ and $\gamma$ commute, the fixed-point algebra $B=A^\gamma$ of $\gamma$ is $\alpha$-invariant. Hence we get an action $\beta$ of $G$ on $B$ by restricting $\alpha$ to $B$, and  $(B, \beta, G, \sigma)$ is a $G$-subsystem of $\Sigma$.  Note that $B$ consists of those functions in $\ell^\infty(G)$ that are constant on left $H$-cosets of $G$, so $B$ may be identified with $\ell^\infty(G/H)$, and $\beta$ with the natural action of $G$ on $\ell^\infty(G/H)$.

Now, as $A=\ell^\infty(G)$ is a W$^*$-algebra 
and $H$ is amenable, there exists a conditional expectation $E$ from $A$ onto $B$ (see e.g.\ \cite{S}). 
Such an $E$ may be constructed as follows. Let $m$ be any left-invariant mean on $\ell^\infty(H)$. For each $f\in A=\ell^\infty(G)$ and $g \in G$, let $f_{H,g} \in \ell^\infty(H)$ be defined by $f_{H,g}(h)= f(gh),\, h \in H$. Then $E$ may be defined by
$$[E(f)](g)= m\big(f_{H,g})\,, \quad f\in A\, , \, g\in G\,.$$
Let us check that $E$ is equivariant. Let $r, g \in G, f\in A$. Then, as
$$[\alpha_r(f)]_{H,g}(h)= [\alpha_r(f)](gh)= f(r^{-1}gh)= f_{H,r^{-1}g}(h)$$
for each $h\in H$, we have $[\alpha_r(f)]_{H,g} = f_{H,r^{-1}g}$. Thus we get
$$E(\alpha_r(f))(g) = m\big( [\alpha_r(f)]_{H,g}\big) = m\big(f_{H,r^{-1}g}\big)  
= [E(f)](r^{-1}g)= [\beta_r\big(E(f)\big)](g)$$
This shows that $E\circ\alpha_r=\beta_r\circ E$ for all $r \in G$. 

From Corollary \ref{abel} we can now conclude that  $\beta$
is amenable in the sense of Anantharaman-Delaroche. 
 This fact may be known to specialists, but it is not clear to us how to deduce it in an easier way (unless of course if $G$ is amenable). 
Moreover, it follows from Corollary \ref{abel} that $\Omega= (B, G, \beta, \sigma)$ has the approximation property. As $B$ is nuclear, using Theorem \ref{mainreg}, we get  that $C^*(\Omega) \simeq C_r^*(\Omega)$ is nuclear. Thinking of $B$ as $\ell^\infty(G/H)$ and $\beta$ as the natural action of $G$ on it, this result seems to be new, except in the "classical" case where 
$H$ is trivial and $\sigma=1$ (see e.g.\ \cite{BrOz, AD2}), and in the case where $H$ is trivial but $\sigma$ is not (see \cite{KLO}). 
\end{example}

\begin{example}
Proposition \ref{perm}� may be applied in the following situation.
Consider a system $\Sigma=(A, G, \alpha, \sigma)$ where  $\sigma$ is scalar-valued and assume that there exists a continuous action $\gamma$ of a compact group $K$ on $A$ which commutes with $\alpha$. Let $B=A^\gamma$ denote the fixed-point algebra of $\gamma$. Then $B$ is $\alpha$-invariant, so we get an action $\beta$ of $G$ on $B$ by restriction, i.e.\  $(B, \beta, G, \sigma)$ is $G$-subsystem of $\Sigma$.  
Now, by compactness of $K$, there exists a  (faithful) canonical conditional expectation $E$ from $A$ onto $B=A^\gamma$ given by $ E(a) = \int_K\gamma_g(x) \, dg, \, a \in A$. It is quite easy to check that $E\circ \alpha_r = \beta_r\circ E$ for all $r \in G$. Hence, we can conclude that $(B, G, \beta, \sigma)$ has the weak approximation property whenever $\Sigma$ has it. 
 
 As a concrete case, let $A=\mathcal{O}_n$ be the Cuntz algebra with $n$ generators for some $2\leq n < \infty.$
 Let $U(n)$ denote the group of all $n\times n$ unitary matrices. To any $U \in U(n)$ we may associate an automorphism $\alpha_U$ of $A$, often called a quasi-free automorphism (see e.g. \cite{ETW, Ev}),� such that $U\to \alpha_U$ is an (outer) action of $U(n)$ on $A$.  
 Let $\gamma$ denote the canonical gauge action of $\Toro$ on $A$ (i.e. $\gamma_z = \alpha_{zI_n}, \, z \in \Toro$). As is well-known, the fixed-point algebra $B=\mathcal{F}_n$ of   $\gamma$ is a UHF-algebra (of type $n^\infty$). 
 
 Now, let $G$ be any subgroup of $U(n)$ and $\alpha$ be the action of $G$ (as a discrete group)  on $A$ by quasi-free automorphisms. Since $\alpha$ and $\gamma$ commute, we are in the above situation (with $\sigma =1$), and we then know that  $(B, G, \beta, 1)$  will have the weak approximation property if $(A, G, \alpha, 1)$ has it. However, being UHF, $B$ has a unique tracial state, which is necessarily  $\beta$-invariant. Hence, Corollary \ref{inv} gives that $G$ must be amenable if $(B, G, \beta, 1)$ has the weak approximation property. Altogether, this means that the following holds:  $(A, G, \alpha, 1)$ has the weak approximation property if and only if 
 $G$ is amenable.  Especially, if $G$ is any non-amenable subgroup of $U(n)$, then $(A, G, \alpha, 1)$ does not have the weak approximation property.
  \end{example}

\bigskip

\bigskip

\noindent{\bf Acknowledgements.} 
 Most of the present work has been done during the several visits
 made by R.C. in 2009 and 2010 at the  Institute of Mathematics, University of Oslo.
 He thanks the operator algebra group for their kind hospitality and the Norwegian Research Council for partial financial support.

\bigskip

\bigskip
{\parindent=0pt Addresses of the authors:\\

\smallskip Erik B\'edos, Institute of Mathematics, University of
Oslo, \\
P.B. 1053 Blindern, N-0316 Oslo, Norway.\\ E-mail: bedos@math.uio.no. \\

\smallskip \noindent
Roberto Conti, Dipartimento di Scienze di Base e Applicate per l'Ingegneria, \\
Sezione di Matematica, 
Sapienza Universit\`a di Roma \\
Via A. Scarpa 16,
I-00161 Roma, Italy.
\\ E-mail: roberto.conti@sbai.uniroma1.it
\par}

\end{document}